\documentclass{amsart}
\usepackage[a4paper,centering,textwidth=145mm,textheight=195mm]{geometry}

\DeclareFontFamily{U}{mathx}{}
\DeclareFontShape{U}{mathx}{m}{n}{<-> mathx10}{}
\DeclareSymbolFont{mathx}{U}{mathx}{m}{n}
\DeclareMathAccent{\widehat}{0}{mathx}{"70}
\DeclareMathAccent{\widecheck}{0}{mathx}{"71}

\numberwithin{equation}{section}

\title[Octagons of Witt groups]{Categories with conjugation\\ and
  octagons of Witt groups}
\author[D.W. Lewis \and J.-P. Tignol]{David W. Lewis$^{\dag}$ \and
  Jean-Pierre Tignol} 

\address[Jean-Pierre Tignol]{%
UCLouvain, ICTEAM Institute\\ B-1348 Louvain-la-Neuve, Belgium
}  

\email{jean-pierre.tignol@uclouvain.be}

\thanks{\dag Author deceased. The second author gratefully
  acknowledges the hospitality of 
  the first author and UCD Dublin when work for this paper
  was initiated. He was
  supported in part by the National Fund for Scientific Research
  (Belgium) and by the European 
  Community’s Human Potential Programme under contract
  HPRN-CT-2002-00287, KTAGS}  

\keywords{Category with duality, hermitian form}
\subjclass[2010]{11E81, 19G12}

\usepackage[all]{xypic}
\xyoption{2cell}
\UseAllTwocells
\newdir{ >}{{}*!/-12pt/@{>}}
\usepackage{BOONDOX-cal}
\newcommand{\catA}{\mathcal{A}}
\newcommand{\catB}{\mathcal{B}}
\newcommand{\catC}{\mathcal{C}}
\newcommand{\catD}{\mathcal{D}}
\newcommand{\catE}{\mathcal{E}}
\newcommand{\catF}{\mathcal{F}}
\newcommand{\catH}{\mathcal{H}}
\newcommand{\catM}{\mathcal{M}}
\newcommand{\catP}{\mathcal{P}}
\newcommand{\catS}{\mathcal{S}}
\newcommand{\Idnt}{\mathbb{I}}
\newcommand{\Idfunc}{\operatorname{Id}}

\newcommand{\conjt}{\iota}
\newcommand{\llambda}{\lambda}
\newcommand{\god}{\cdot}
\newcommand{\isom}{\xrightarrow{\sim}}
\newcommand{\natiso}{\stackrel{\sim}{\Rightarrow}}

\newcommand{\ext}{\mathsf{ext}}
\newcommand{\tr}{\mathsf{tr}}
\newcommand{\can}{\mathsf{can}}
\newcommand{\Mor}{\mathsf{M}}
\newcommand{\rad}{R}

\renewcommand{\circ}{\,}

\DeclareMathOperator{\Hom}{Hom}
\DeclareMathOperator{\End}{End}

\DeclareMathOperator{\Int}{Int}

\newtheorem{proposition}{Proposition}[section]
\newtheorem{lemma}[proposition]{Lemma}
\newtheorem{theorem}[proposition]{Theorem}

\theoremstyle{remark}
\newtheorem{remark}[proposition]{Remark}

\theoremstyle{definition}
\newtheorem*{property}{Property (P)}

\begin{document}

\begin{abstract}
A categorical approach is proposed for the construction of
$8$-periodic chain complexes of Witt groups of categories with
duality. Exactness of the 
resulting sequences is proved under the hypothesis that
the categories are abelian and artinian, or when the endomorphism
algebra of each object is semilocal and rad-adically complete.
\end{abstract}

\maketitle

\section*{Introduction}

In the algebraic theory of quadratic forms, the special r\^ole played
by 
quadratic field extensions was noticed early. Among the first
observations stands out the description by
Pfister~\cite[p.~123]{Pfister} of quadratic forms that become 
hyperbolic over a given quadratic extension, reformulated in terms of
hermitian forms by Milnor--Husemoller: see~\cite[App.~2]{MH}, where it
appears as a $4$-term exact sequence. Using Scharlau's transfer,
Elman--Lam prolonged the sequence into a long $3$-periodic exact
sequence (an ``exact triangle'' or ``symmetric exact hexagon'',
see~\cite[Th.~2.6]{EL}). 
Extending this
early result in the framework of hermitian or skew-hermitian forms
over quaternion algebras, the first author uncovered a more complex
behaviour, giving rise to longer exact sequences relating forms over
quaternion algebras and over quadratic subfields, see~\cite{L1},
\cite{L2} (see also
\cite[Th.~10.3.2]{Scharlau}). It was soon realised that exact
sequences thus 
constructed are $8$-periodic, and can therefore be folded into exact
octagons,
see~\cite{L3}. Meanwhile, Ranicki also constructed $8$-periodic exact
sequences of Witt groups for ``twisted quadratic extensions'' via
$L$-theory: see~\cite{R} and \cite[(0.2)]{HTW}.

Building on the first author's work,
Parimala--Sridharan--Suresh~\cite[Appendix~2]{BFP} 
extended 
the scope of~\cite{L2} beyond the case of quaternion algebras,
obtaining exact sequences relating Witt groups of hermitian or
skew-hermitian forms over a central simple algebra and over the
centraliser of a quadratic subfield. These exact sequences were
further patched into exact octagons of Witt groups by
Grenier-Boley--Mahmoudi~\cite{GBM}. The latest development in this
line of investigation is due to First~\cite{First}, who obtained an
analogous exact octagon of Witt groups for Azumaya algebras over
semilocal rings. His paper also discusses several important
applications.
\medbreak

The purpose of this work is to describe a categorical mechanism that
produces the octagons referred to above. Our approach is close in
spirit to the method in~\cite{L4} making use of the periodicity of the
Clifford algebra construction and Morita equivalences. The basic tool
is a notion of 
\emph{conjugation} on an additive pseudo-abelian (=
idempotent-complete) category with $\frac12$ (i.e., such that $2$ is
invertible in each $\Hom$ group). A conjugation is a covariant version
of a duality, 
consisting of an endofunctor $\conjt$ and a natural isomorphism
$\lambda\colon \conjt\conjt\natiso\Idfunc$. The construction of the
hermitian category of symmetric spaces from a category with duality
has an analogue for a category with conjugation
$(\catC,\conjt,\lambda)$, which yields a \emph{double} category with
conjugation $(\catC',\conjt',\lambda')$. The categories $\catC$ and
$\catC'$ are related by a pair of functors $F\colon \catC\to\catC'$
and $G\colon\catC'\to\catC$. The functor $F$ is reminiscent of the
construction of hyperbolic symmetric spaces in the context of
categories with duality, while $G$ is a forgetful functor. Iterating
the doubling construction yields an infinite sequence of categories
with conjugation $(\catC'',\conjt'',\lambda'')$, $(\catC''',
\conjt''',\lambda''')$, \ldots\ and functors
\[
  \xymatrix{
    \catC\ar[r]^{F}&
    \catC'\ar[r]^{F'}&
    \catC''\ar[r]^{F''}
    &\cdots
  }
  \qquad\text{and}\qquad
  \xymatrix{
    \catC&
    \catC'\ar[l]_{G}
    &
    \catC''\ar[l]_{G'}
    &
    \cdots\ar[l]_{G''}
  }
\]
We show in
\S\ref{subsec:periodconj} that this construction is $2$-periodic by
defining an equivalence
$(\catC,\conjt,\lambda)\equiv(\catC'',\conjt'',\lambda'')$, which
allows us to fold the sequence above into a $2$-gon
$\xymatrix{
\catC
\ar@<0.5ex>[r]^{F}
&
\catC'
\ar@<0.5ex>[l]^{G}}$.

When $\catC$ also carries a
duality $(D,\delta)$, commuting in a natural sense with a conjugation
$(\conjt,\lambda)$, a ``conjugate'' duality $(\conjt D,\delta_\alpha)$
on $\catC$ and ``double'' dualities $(D',\delta')$ and
$(\conjt'D',\delta'_{\alpha'})$ on $\catC'$ are constructed in
\S\ref{sec:dual}. In contrast with the doubling construction, the
resulting categories with duality form an $8$-periodic sequence,
see~\eqref{eq:octacatdual}:
\begin{equation}
  \label{eq:intro}
  \begin{split}
  \xymatrix{
    (\catC,D,\delta) \ar[r]^-{\widehat F}
    &
    (\catC',D',\delta')\ar[r]^-{\widehat G}
    &
    (\catC,\conjt D,-\delta_\alpha) \ar[r]^-{\widecheck F}
    &
    (\catC',\conjt' D',-\delta'_{\alpha'}) \ar[d]^{\widecheck G}
    \\
    (\catC',\conjt' D',\delta'_{\alpha'}) \ar[u]^{\widecheck G}
    &
    (\catC,\conjt D,\delta_\alpha) \ar[l]_-{\widecheck F}
    &
    (\catC',D',-\delta') \ar[l]_-{\widehat G}
    &
    (\catC,D,-\delta) \ar[l]_-{\widehat F}
  }
\end{split}
\end{equation}

The corresponding hermitian categories of symmetric spaces discussed
in~\S\ref{sec:symsp} are 
similarly organised, see~\eqref{eq:octasspaces}, and we show in
Theorem~\ref{thm:octamain} that under the composition of two
consecutive functors from the $8$-periodic sequence the image of each
hermitian category consists of hyperbolic symmetric spaces.

Defining a Witt group from the hermitian category of a category with
duality $(\catC,D,\delta)$ requires an exact structure on $\catC$,
which provides the definition of metabolic symmetric spaces,
see~\cite[\S1.2.4]{Balmer}. We consider two special cases
in~\S\ref{sec:Witt}. For the \emph{split exact structure}, in 
which exact sequences split, metabolic symmetric spaces are
hyperbolic, and Theorem~\ref{thm:octamain} readily yields
from~\eqref{eq:intro} an $8$-periodic chain complex. Exactness
requires a kind of Witt cancellation property (see
Remark~\ref{rem:WCP}), which we establish under the hypothesis that
the endomorphism algebra of every object in $\catC$ is semilocal and
rad-adically complete: see Theorem~\ref{thm:octasplit}. This theorem does
\emph{not} cover the case discussed by First in~\cite{First}, which
deals with 
the split exact category of finitely generated projective modules over
Azumaya algebras over semilocal rings, but it yields the
Grenier-Boley--Mahmoudi octagon \cite{GBM} as a special case.

When $\catC$ is an abelian category, with its usual exact
structure, our main result establishes
the exactness of the 
octagon of Witt groups derived from~\eqref{eq:intro} under the
hypothesis that the abelian category $\catC$ is artinian: see
Theorem~\ref{thm:abelcat}. 
This result is sufficient to account for the exact octagon
from~\cite{L3}, and also for Warshauer's exact octagon
of Witt groups of asymmetric spaces in~\cite{War}. By contrast, since
characteristic~$2$ is ruled out from the outset in this work, our
results do not cover the version of \cite{L2} in
characteristic~$2$ given by Knus--Villa in~\cite[Th.~3.8]{KV}.

The sequence of categories with duality $(\catC,D,\delta)$, $(\catC,
\conjt D, -\delta_\alpha)$, $(\catC,D,-\delta)$, $(\catC,\conjt
D,\delta_\alpha)$  extracted from the $8$-periodic
sequence~\eqref{eq:intro} bears some formal resemblance with the
sequence of shifted dualities $T^n(\catC, D,\delta)$ of a triangulated
category with duality, which also satisfies $T^{n+2}(\catC, D,\delta)
\equiv T^n(\catC, D, -\delta)$ for all $n\in\mathbb{Z}$ and is
therefore 
$4$-periodic, see~\cite[63--65]{Balmer}. However, a triangulation on
an additive category is a very different feature than a conjugation,
as witnessed by the category $\catP_A$ of finitely generated right
modules over an artinian simple ring $A$ with involution, with the
usual duality $(D,\delta)$ defined in~\S\ref{subsec:exsymsp1} below
and the split exact structure: it is easy to find examples of
conjugations commuting with $(D,\delta)$ such that all the Witt groups
obtained from the sequence $(\catP_A,D,\delta)$, $(\catP_A,\conjt
D,-\delta_\alpha)$, $(\catP_A,D,-\delta)$, $(\catP_A,\conjt
D,\delta_\alpha)$ (which are Witt groups of hermitian spaces) are
nonzero, whereas the odd-indexed derived Witt groups vanish,
see~\cite[Th.~4.2]{BP}.

The examples of octagons worked out in detail in this work are all
based on categories of modules, but even in this particular case they
do not exhaust the full range of possibilities (see
Remark~\ref{rem:bimod}). The formalism of conjugations extends
much farther; it will hopefully be useful in geometric settings for
more general exact categories with duality.

\subsection*{Notation}

For any category $\catC$, the notation $C\in\catC$ means that $C$ is
an object of $\catC$.
Natural transformations of functors are denoted by double arrows.
If $F$, $F'$ are functors $\catC\to\catD$ and $G$, $G'$ are functors
$\catD\to\catE$ , we write simply $GF$, $G'F'$ for the composition of
functors.
If $a\colon
F\Rightarrow F'$ and 
$b\colon G\Rightarrow G'$ are natural transformations, we write
$b\god a$  for the Godement product (horizontal composition)
$GF\Rightarrow G'F'$; thus, for 
every $C\in\catC$ the morphism $(b\god a)_C\colon GF(C)\to G'F'(C)$ is
the diagonal of the following commutative square (assuming $G$, $G'$
are covariant):
\[
  \xymatrix@C+5pt{
  GF(C)\ar[r]^{G(a_{C})}\ar[d]_{b_{F(C)}}&GF'(C)\ar[d]^{b_{F'(C)}}\\
  G'F(C)\ar[r]^{G'(a_{C})}&G'F'(C)
}
\]
(If $G$, $G'$ are contravariant, the arrows $G(a_C)$ and $G'(a_C)$ are
reversed.) 
In particular, denoting by $\Idnt_{F}\colon\,F\natiso F$ and
$\Idnt_{G}\colon\,G\natiso G$ the identity natural
isomorphisms, the 
natural transformations $\Idnt_{G}\god a\colon\,G F\Rightarrow
G F'$ 
and $b\god\Idnt_{F}\colon\,G F\Rightarrow G' F$ are 
defined by
\[
(\Idnt_{G}\god a)_{C}=G(a_{C}),\qquad
(b\god\Idnt_{F})_{C}=b_{F(C)}.
\]

\section{Categories with conjugation}
\label{sec:double}

Throughout this section, $\catC$ denotes a $k$-linear category for an
arbitrary commutative ring $k$, and all the functors are tacitly
assumed to be $k$-linear. Although this is not necessary for the
very first definitions, we assume $2$ is invertible in $k$ and $\catC$
is pseudo-abelian (=idempotent-complete, see
\cite[Ch.~I, Prop.~6.9]{Karoubi}), which means that every 
idempotent morphism splits. The latter
property holds for instance for the category of finitely-generated
projective modules over an arbitrary $k$-algebra.

\subsection{Definitions}
\label{subsec:def}
A \emph{conjugation} on a category $\catC$ is a pair $(\conjt,\lambda)$
consisting of a $k$-linear covariant functor
$
  \conjt\colon\,\catC\to\catC
$
and a natural isomorphism
$
  \lambda\colon\,\conjt\conjt\natiso\Idfunc_{\catC}
$
such that $\lambda\god\Idnt_{\conjt}=\Idnt_{\conjt}\god\lambda$, i.e.\ for every
object $C$ in $\catC$,
\[
\lambda_{\conjt(C)}=\conjt(\lambda_{C})\colon\,
\conjt\conjt\conjt(C)\isom
\conjt(C).
\]
A \emph{morphism} of categories with conjugation
$(F,\tau)\colon\,(\catC_1,\conjt_1,\lambda_1)\to
(\catC_2,\conjt_2,\lambda_2)$ consists of 
a $k$-linear covariant functor $F\colon\,\catC_1\to\catC_2$ and a
natural isomorphism 
$\tau\colon\,F\conjt_1\natiso\conjt_2F$ such that the following
diagram commutes:
\[
\xymatrix{
\conjt_2F\conjt_1
\ar@{=>}[r]^{\Idnt_{\conjt_2}\god\tau}
\ar@{=>}[d]_{\tau^{-1}\god\Idnt_{\conjt_1}}
&
\conjt_2\conjt_2F
\ar@{=>}[d]^{\lambda_2\god\Idnt_{F}}
\\
F\conjt_1\conjt_1
\ar@{=>}[r]_{\Idnt_{F}\god\lambda_1}
&
F
}
\]
\medbreak

Given a conjugation $(\conjt,\lambda)$ on a category $\catC$ we 
define the \emph{double} category $\catC'$: its objects are pairs
$(C,f)$ where 
$C\in\catC$ and $f\colon\,\conjt(C)\to C$ is an
isomorphism in $\catC$ such that the following diagram commutes:
\[
\xymatrix{
\conjt{\conjt(C)}
\ar[r]^{\conjt(f)}
\ar[dr]_{\lambda_{C}}
&
\conjt(C)
\ar[d]^{f}
\\
&
C
}
\]
(Note the analogy with the definition of a symmetric space in a
category with duality; see \S\ref{sec:symsp}.)
Morphisms $(C,f)\to(D,g)$ in $\catC'$ are morphisms $\varphi\colon\,C\to
D$ in $\catC$ such that $\varphi\circ f=g\circ\conjt(\varphi)$.

Clearly, the double category $\catC'$ is $k$-linear, with direct sum
\[
(C_1,f_1)\oplus(C_2,f_2)=\bigl(C_1\oplus C_2,
\bigl(
\begin{smallmatrix}
f_1&0\\
0&f_2
\end{smallmatrix}
\bigr)\bigr).
\]
It is also pseudo-abelian, because idempotent morphisms $e\colon
(C,f)\to (C,f)$ in $\catC'$ are also idempotent in $\catC$, and if
$K\xrightarrow{t}C\xrightarrow{s}K$ are morphisms in $\catC$ such that
$s\circ t=\Idfunc_K$
and
$t\circ s=e$, then $\bigl(K,s\circ f\circ\conjt(t)\bigr)
\xrightarrow{t}(C,f)\xrightarrow{s}\bigl(K,s\circ
f\circ\conjt(t)\bigr)$ are morphisms in $\catC'$ that split
$e$.
\medbreak

The conjugation $(\conjt,\lambda)$ on $\catC$ induces a
conjugation $(\conjt',\lambda')$ on $\catC'$ as follows: for $(C,f)$,
$(D,g)\in\catC'$ and $\varphi\in\Hom_{\catC'}\bigl((C,f),(D,g)\bigr)$,
\[
  \conjt'(C,f)=\bigl(\conjt(C),-\conjt(f)\bigr), \qquad
  \conjt'(\varphi)=\conjt(\varphi) \quad\text{and}\quad
  \lambda'_{(C,f)}=-\lambda_C.
\]
(Note the signs in the definitions of $\iota'(C,f)$ and of
$\lambda'_{(C,f)}$.) 
\medbreak

The category with conjugation $(\catC,\conjt,\lambda)$ and its double
$(\catC',\conjt',\lambda')$ are related by a pair of
adjoint\footnote{The functors $F$ and $G$ are adjoint to each other on
  each side. We omit the easy proof because this property is not used
  in the sequel.}
functors $F$, $G$, which are described next.

For $C\in\catC$, let
\[
\Lambda_{C}=
\begin{pmatrix}
0&\lambda_{C}\\
\Idfunc_{\conjt(C)}&0
\end{pmatrix}\colon\,
\conjt(C)\oplus\conjt\conjt(C)\to C\oplus\conjt(C).
\]
Then $(C\oplus\conjt(C),\Lambda_{C})\in\catC'$ and we may
define a $k$-linear functor
\[
F\colon\,\catC\to\catC'
\quad\text{by}\quad
C\mapsto(C\oplus\conjt(C),\Lambda_{C}),\quad
\varphi\mapsto
\begin{pmatrix}
\varphi&0\\
0&\conjt(\varphi)
\end{pmatrix}.
\]

On the other hand, we consider the forgetful functor
\[
G\colon\,\catC'\to\catC\qquad
(C,f)\mapsto C,\quad
\varphi\mapsto\varphi.
\]

\begin{proposition}
  \label{prop:FG}
  There are natural isomorphisms $\Idfunc_{\catC}\oplus\conjt \natiso
  GF$ and $\Idfunc_{\catC'}\oplus\conjt'\natiso FG$.
\end{proposition}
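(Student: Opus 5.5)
The first isomorphism should be an identity, and the second an explicit $2\times2$ matrix of morphisms whose inverse uses the hypothesis that $2$ is invertible.

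\emph{First isomorphism.} By definition, $GF(C)=C\oplus\conjt(C)$ and $GF(\varphi)=\bigl(\begin{smallmatrix}\varphi&0\\0&\conjt(\varphi)\end{smallmatrix}\bigr)$, which is exactly $(\Idfunc_{\catC}\oplus\conjt)(\varphi)$. So I would take the identity natural transformation $\Idfunc_{\catC}\oplus\conjt\natiso GF$.

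\emph{Second isomorphism: the target.} Fix $(C,f)\in\catC'$. Then
\[
(\Idfunc_{\catC'}\oplus\conjt')(C,f)=\bigl(C\oplus\conjt(C),\bigl(\begin{smallmatrix}f&0\\0&-\conjt(f)\end{smallmatrix}\bigr)\bigr),\qquad FG(C,f)=\bigl(C\oplus\conjt(C),\Lambda_C\bigr).
\]
I need an endomorphism $\Phi_{(C,f)}$ of $C\oplus\conjt(C)$ in $\catC$ satisfying two conditions. First, it must be a morphism in $\catC'$, i.e.
\[
\Phi\circ\bigl(\begin{smallmatrix}f&0\\0&-\conjt(f)\end{smallmatrix}\bigr)=\Lambda_C\circ\conjt(\Phi).
\]
Second, it must be invertible.

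\emph{The choice of $\Phi$.} Write $\Phi=\bigl(\begin{smallmatrix}a&b\\c&d\end{smallmatrix}\bigr)$ and compute both sides entrywise. This suggests taking
\[
\Phi_{(C,f)}=\begin{pmatrix}\Idfunc_C&f\\ f^{-1}&-\Idfunc_{\conjt(C)}\end{pmatrix}.
\]
The four entry checks all reduce to the defining relation $f\circ\conjt(f)=\lambda_C$ of an object of $\catC'$, used in the equivalent form $f=\lambda_C\circ\conjt(f)^{-1}$:
\begin{itemize}
\item The $(1,1)$ entries are $f$ on the left and $\lambda_C\circ\conjt(f^{-1})$ on the right; these agree by that form.
\item The $(1,2)$ entries are both $-f\circ\conjt(f)=-\lambda_C$.
\item The $(2,1)$ entries are both $\Idfunc_{\conjt(C)}$.
\item The $(2,2)$ entries are both $\conjt(f)$.
\end{itemize}

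\emph{Invertibility.} A direct multiplication gives $\Phi^2=2\cdot\Idfunc$. Since $2$ is invertible in $k$, $\Phi$ is an isomorphism in $\catC$ with inverse $\frac12\Phi$. This inverse is again a morphism in $\catC'$, because the defining condition for morphisms in $\catC'$ is $k$-linear in $\Phi$.

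\emph{Naturality.} Let $\psi\colon(C,f)\to(D,g)$ be a morphism in $\catC'$. Both functors send $\psi$ to $\bigl(\begin{smallmatrix}\psi&0\\0&\conjt(\psi)\end{smallmatrix}\bigr)$. Naturality of $\Phi$ then amounts to two identities:
\begin{itemize}
\item $\psi\circ f=g\circ\conjt(\psi)$, which is the defining property of $\psi$;
\item $\conjt(\psi)\circ f^{-1}=g^{-1}\circ\psi$, which follows from the first by composing with $g^{-1}$ on the left and $f^{-1}$ on the right.
\end{itemize}

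The only step requiring thought is finding the matrix $\Phi$. It is the ``eigenspace'' decomposition of $f$, and it is the one place where the invertibility of $2$ is used. Once $\Phi$ is written down, every remaining verification is routine.
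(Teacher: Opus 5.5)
Your proof is correct and is essentially the paper's: the paper uses $\frac12\bigl(\begin{smallmatrix}\Idfunc_C&-f\\ f^{-1}&\Idfunc_{\conjt(C)}\end{smallmatrix}\bigr)$, which differs from your $\Phi$ only by the scalar $\frac12$ and the automorphism $\bigl(\begin{smallmatrix}\Idfunc_C&0\\ 0&-\Idfunc_{\conjt(C)}\end{smallmatrix}\bigr)$ of $C'\oplus\conjt'(C')$.

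One small correction: $k$-linearity only shows that $\frac12\Phi$ is a $\catC'$-morphism in the same direction as $\Phi$, but you need $\Phi^{-1}\colon FG(C')\to C'\oplus\conjt'(C')$ to be a $\catC'$-morphism. That follows instead by composing $\Phi\circ\bigl(\begin{smallmatrix}f&0\\ 0&-\conjt(f)\end{smallmatrix}\bigr)=\Lambda_C\circ\conjt(\Phi)$ with $\Phi^{-1}$ on the left and $\conjt(\Phi^{-1})=\conjt(\Phi)^{-1}$ on the right.
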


\begin{proof}
  The first isomorphism is clear since $GF(C)=C\oplus\conjt(C)$ for
  $C\in\catC$. For $C'=(C,f)\in\catC'$, it is readily verified that
  $\frac12\bigl(
  \begin{smallmatrix}
    \Idfunc_C&-f\\ f^{-1}&\Idfunc_{\conjt(C)}
  \end{smallmatrix}
  \bigr)$ is an isomorphism $C'\oplus\conjt'(C')\isom FG(C')$.
\end{proof}

\subsection{Periodicity}
\label{subsec:periodconj}

Let $(\catC,\conjt,\lambda)$ be a category with conjugation and
$(\catC',\conjt',\lambda')$ its double. Repeating the doubling
construction, we obtain a new category with conjugation
$(\catC'',\conjt'',\lambda'')$ 
and, by induction, an infinite sequence of categories with
conjugations related by functors defined on the same model as $F$ and
$G$ in~\S\ref{subsec:def}:
\[
\xymatrix{
\catC
\ar@<0.5ex>[r]^{F}
&
\catC'
\ar@<0.5ex>[r]^{F'}
\ar@<0.5ex>[l]^{G}
&
\catC''
\ar@<0.5ex>[r]^{F''}
\ar@<0.5ex>[l]^{G'}
&
\catC'''
\ar@<0.5ex>[r]^{F'''}
\ar@<0.5ex>[l]^{G''}
&
\cdots
\ar@<0.5ex>[l]^{G'''}
}
\]
We show in this subsection that this sequence is
$2$-periodic.
\medbreak

We first define a pair of $k$-linear functors
$\xymatrix{\catC''
  \ar@<0.5ex>[r]^{\Theta}
  &
\ar@<0.5ex>[l]^{\Psi}
\catC}$.
To simplify
notation, we write $(C,f,g)$ for
$C''=\bigl((C,f),g\bigr)\in\catC''$. With 
this notation, the map $g$ is a morphism $\conjt'(C,f)\to(C,f)$ in
$\catC'$, hence
\begin{equation}
\label{fgantic.eq}
g\circ(-\conjt(f))=f\circ\conjt(g).
\end{equation}
Moreover, $g\circ\conjt'(g)=\lambda'_{(C,f)}$, hence
\begin{equation}
\label{f2g2.eq}
g\circ\conjt(g)=-\lambda_{C}=-f\circ\conjt(f).
\end{equation} 
It follows that
\[
  g\circ f^{-1}\circ g\circ f^{-1}=
  -g\circ\conjt(g)\circ\conjt(f)^{-1}\circ
f^{-1}=
\Idfunc_{C}.
\]
Therefore, $\frac12(\Idfunc_{C}-g\circ f^{-1})$ is an idempotent in
$\End_{\catC}(C)$. Since $\catC$ is assumed to be pseudo-abelian,
there are morphisms
$\Theta(C'')\xrightarrow{t_{C''}}C\xrightarrow{s_{C''}} \Theta(C'')$
in $\catC$
such that
\begin{equation}
  \label{second.eq}
s_{C''}\circ t_{C''}=\Idfunc_{\Theta(C'')}
\quad\text{and}\quad
t_{C''}\circ s_{C''}={\textstyle\frac12}(\Idfunc_{C}-g\circ f^{-1}).
\end{equation}
Thus, $\Theta(C'')$ is ``the'' kernel of
$\frac12(\Idfunc_C+g\circ f^{-1})$. 
Define a functor
\[
\Theta\colon\,\catC''\to\catC
\quad\text{by}\quad
\begin{cases}
C''\mapsto \Theta(C''),\\
(\varphi\colon C_1''\to C_2'')\mapsto
\bigl(s_{C_2''}\circ\varphi\circ
t_{C_1''}\colon\Theta(C_1'')\to\Theta(C_2'')\bigr). 
\end{cases}
\]
Note that the morphisms $\conjt(t_{C''})$, $\conjt(s_{C''})$
satisfy
\[
  \conjt(s_{C''})\circ\conjt(t_{C''})=\Idfunc_{\conjt(\Theta(C''))}
  \qquad\text{and}\qquad
  \conjt(t_{C''})\circ\conjt(s_{C''})=
  \textstyle{\frac12}
  (\Idfunc_{\conjt(C)}-\conjt(g)\circ\conjt(f)^{-1}),
\]
so we may identify $\conjt(\Theta(C''))$ with
$\Theta(\conjt(C),-\conjt(f),-\conjt(g))=\Theta\conjt''(C'')$ to
obtain a natural 
isomorphism $\theta\colon \Theta\conjt''\natiso\conjt \Theta$,
hence a 
morphism of categories with conjugation
\[
(\Theta,\theta)\colon(\catC'',\conjt'',\lambda'')\to
(\catC,\conjt,\lambda).  
\]

On the other hand, for every $C\in\catC$, define
\[
\Delta_{C}=\begin{pmatrix}
0&-\lambda_{C}\\
\Idfunc_{\conjt(C)}&0
\end{pmatrix}
\colon\,\conjt(C)\oplus\conjt\conjt(C)\to
C\oplus\conjt(C).
\]
Computation shows that
$-\Delta_{C}\circ\conjt(\Lambda_{C})=
\Lambda_{C}\circ\conjt(\Delta_{C})$, hence 
$\Delta_{C}$ defines a morphism in $\catC'$,
\[
\Delta_{C}\colon\,\conjt'(C\oplus\conjt(C),\Lambda_{C})\to
(C\oplus\conjt(C),\Lambda_{C}). 
\]
Moreover,
\[
\Delta_{C}\circ\conjt'(\Delta_{C})=
\begin{pmatrix}
-\lambda_{C}&0\\
0&-\lambda_{\conjt(C)}
\end{pmatrix}
=\lambda'_{(C\oplus\conjt(C),\Lambda_{C})},
\]
hence $(C\oplus\conjt(C),\Lambda_{C},\Delta_{C})\in\catC''$.
Define a functor
\[
\Psi\colon\,\catC\to\catC''
\quad\text{by}\quad
C\mapsto(C\oplus\conjt(C),\Lambda_{C},\Delta_{C}),
\quad
\varphi\mapsto
\begin{pmatrix}
\varphi&0\\
0&\conjt(\varphi)
\end{pmatrix}.
\]
For every $C\in\catC$, let
\[
\psi_C=
\begin{pmatrix}
-\Idfunc_{\conjt(C)}&0\\
0&\Idfunc_{\conjt\conjt(C)}
\end{pmatrix}\colon
\conjt(C)\oplus\conjt\conjt(C)\to
\conjt(C)\oplus\conjt\conjt(C).
\]
The map $\psi_C$ defines an isomorphism in $\catC''$
\[
  \psi_C\colon \Psi\conjt(C)\isom
\conjt''\Psi(C),
\]
hence a natural isomorphism $\psi\colon
\Psi\conjt\natiso\conjt''\Psi$. The equation
$\lambda''_{\Psi(C)}\circ \conjt''(\psi_C) = \Psi(\lambda_C)\circ
\psi^{-1}_{\conjt(C)}$ is readily verified for every $C\in\catC$,
hence $(\Psi,\psi)$ is a morphism of categories with conjugation
\[
  (\Psi,\psi)\colon (\catC,\conjt,\lambda)\to
  (\catC'',\conjt'',\lambda''). 
\]

\begin{theorem}
  The morphisms $(\Theta,\theta)$ and $(\Psi,\psi)$ define an
  equivalence of categories with conjugation
  \[
    (\catC,\conjt,\lambda)\equiv(\catC'',\conjt'',\lambda'').
  \]
  Moreover, the following diagrams commute (up to natural
  isomorphisms): 
  \begin{equation}
    \label{eq:diagKM}
    \begin{split}
    \xymatrix{\catC''\ar[dr]^{G'}
      &
      \\
      \catC\ar[r]^{F}\ar[u]^{\Psi}_{\wr}
      &\catC'}
    \qquad\qquad
    \xymatrix{\catC'\ar[r]^{F'}\ar[dr]_{G}
      &\catC''\ar[d]_{\wr}^{\Theta}
      \\
      &
      \catC
    }
    \end{split}
  \end{equation}
\end{theorem}

\begin{proof}
  For $C\in\catC$, the maps
$
\bigl(
\begin{smallmatrix}
\Idfunc_{C}\\
0
\end{smallmatrix}\bigr)\colon\,C\to C\oplus\conjt(C)
$
and
$
(\Idfunc_{C},0)\colon\,C\oplus\conjt(C)\to C
$
satisfy
\[
  (\Idfunc_{C},0)\circ
  \bigl(
\begin{smallmatrix}
\Idfunc_{C}\\
0
\end{smallmatrix}\bigr)=\Idfunc_{C}
\quad\text{and}\quad
\bigl(
\begin{smallmatrix}
\Idfunc_{C}\\
0
\end{smallmatrix}\bigr)
\circ (\Idfunc_{C},0)=
\begin{pmatrix}
\Idfunc_{C}&0\\
0&0
\end{pmatrix}=
{\textstyle\frac12}(\Idfunc_{C\oplus\conjt(C)}-
\Delta_{C}\circ\Lambda_{C}^{-1}).
\]
Therefore, we may identify $C$ with
$\Theta(C\oplus\conjt(C),\Lambda_{C},\Delta_{C})$ to 
obtain a natural isomorphism $\xi\colon\Idfunc_{\catC}\natiso
\Theta\Psi$. Explicitly, for $C\in\catC$,
\begin{equation}
  \label{eq:xidef}
  \xi_C\colon C\to \Theta\Psi(C) \quad\text{is given by}\quad
  \xi_C=s_{\Psi(C)}\circ\bigl(
\begin{smallmatrix}
\Idfunc_{C}\\
0
\end{smallmatrix}\bigr).
\end{equation}

Now, consider $C''=(C,f,g)\in\catC''$, and write simply
$X$ for $\Theta(C'')$. By \eqref{second.eq} we have
\[
t_{C''}=t_{C''} s_{C''} t_{C''}={\textstyle\frac12}(t_{C''}-g f^{-1}
t_{C''}) 
\quad\text{and}\quad
s_{C''}=s_{C''} t_{C''} s_{C''}={\textstyle\frac12}(s_{C''}-s_{C''} g
f^{-1}), 
\]
hence
\begin{equation}
\label{efgs1.eq}
g^{-1} t_{C''}= -f^{-1} t_{C''}
\qquad\text{and}\qquad
s_{C''} f=-s_{C''} g.
\end{equation}
Since $\conjt(f g^{-1})=-g^{-1} f$ by \eqref{fgantic.eq}, applying
$\conjt$ to \eqref{efgs1.eq} yields
\begin{equation}
\label{efgs2.eq}
f\conjt(t_{C''})=g\conjt(t_{C''})
\qquad\text{and}\qquad
\conjt(s_{C''}) g^{-1}=\conjt(s_{C''}) f^{-1}.
\end{equation}
\relax From \eqref{efgs1.eq} we get $\conjt(s_{C''}) g^{-1} t_{C''}=-
\conjt(s_{C''}) f^{-1}
t_{C''}$, and from \eqref{efgs2.eq},
$\conjt(s_{C''}) g^{-1} t_{C''}=\conjt(s_{C''}) f^{-1} 
t_{C''}$, hence $\conjt(s_{C''})
f^{-1}t_{C''}=0$ since $\frac12\in k$. Similarly, $s_{C''}
f\conjt(t_{C''})=0$. 
Computation then shows that the
maps
\[
(t_{C''},f\conjt(t_{C''}))\colon\,X\oplus\conjt(X)\to C
\qquad\text{and}\qquad
\bigl(
\begin{smallmatrix}
s_{C''}\\
\conjt(s_{C''}) f^{-1}
\end{smallmatrix}
\bigr)\colon\,C\to X\oplus\conjt(X)
\]
satisfy
\[
(t_{C''},f\conjt(t_{C''}))\circ\bigl(
\begin{smallmatrix}
s_{C''}\\
\conjt(s_{C''}) f^{-1}
\end{smallmatrix}
\bigr)=\Idfunc_{C}
\qquad\text{and}\qquad
\bigl(
\begin{smallmatrix}
s_{C''}\\
\conjt(s_{C''}) f^{-1}
\end{smallmatrix}
\bigr)\circ(t_{C''},f\conjt(t_{C''}))=\Idfunc_{X\oplus\conjt(X)},
\]
hence they define an isomorphism $C\simeq X\oplus\conjt(X)$ in
$\catC$. From~\eqref{efgs1.eq} and \eqref{efgs2.eq} it follows that
these maps actually define an isomorphism
$C''\simeq(X\oplus\conjt(X),\Lambda_{X},\Delta_{X})$ in 
$\catC''$, hence a natural isomorphism $\eta\colon
\Idfunc_{\catC''}\natiso 
\Psi\Theta$. Explicitly, for $C''\in\catC''$,
\begin{equation}
  \label{eq:etadef}
  \eta_{C''}\colon C''\to\Psi\Theta(C'') \quad\text{is given by}\quad
  \eta_{C''} =
  \bigl(
\begin{smallmatrix}
s_{C''}\\
\conjt(s_{C''}) f^{-1}
\end{smallmatrix}
\bigr).
\end{equation}

To complete the proof, we check the commutativity of the diagrams
in~\eqref{eq:diagKM}. For $C\in\catC$,
\[
  G'\Psi(C)=G'\bigl(C\oplus\conjt(C),\Lambda_C,\Delta_C\bigr) =
  (C\oplus\conjt(C),\Lambda_C) = F(C),
\]
hence $G'\Psi=F$ and the left triangle in~\eqref{eq:diagKM}
commutes.
Now, for $C'=(C,f)\in\catC'$,
\[
F'(C')=
\Bigl(
C\oplus\conjt(C),
\bigl(
\begin{smallmatrix}
f&0\\
0&-\conjt(f)
\end{smallmatrix}
\bigr),
\bigl(
\begin{smallmatrix}
0&-\lambda_C\\
\Idfunc_{\conjt(C)}&0
\end{smallmatrix}
\bigr)
\Bigr),
\]
and $\Theta F'(C')$ is a splitting of the idempotent $\frac12\bigl(
\begin{smallmatrix}
\Idfunc_C&-f\\
-f^{-1}&\Idfunc_{\conjt(C)}
\end{smallmatrix}
\bigr)$, equipped with morphisms in $\catC$
$\Theta F'(C')\xrightarrow{t_{F'(C')}}C\oplus\conjt(C)
\xrightarrow{s_{F'(C')}} \Theta F'(C')$
such that
\[
  s_{F'(C')}\circ t_{F'(C')} = \Idfunc_{\Theta F'(C')}
  \qquad\text{and}\qquad
  t_{F'(C')}\circ s_{F'(C')} = {\textstyle\frac12}
\begin{pmatrix}
\Idfunc_C&-f\\
-f^{-1}&\Idfunc_{\conjt(C)}
\end{pmatrix}.
\]
Observe that
$
  \bigl(\begin{smallmatrix}
\Idfunc_C\\-f^{-1}
\end{smallmatrix}
\bigr)\colon C\to C\oplus\conjt(C)
$
and
$
  \textstyle{\frac12}\bigl(
\Idfunc_C,-f
\bigr)\colon C\oplus\conjt(C)\to C
$
also satisfy
\[
  \textstyle{\frac12}\bigl(
\Idfunc_C,-f
\bigr)
\circ \bigl(\begin{smallmatrix}
\Idfunc_C\\-f^{-1}
\end{smallmatrix}
\bigr)=\Idfunc_C
\qquad\text{and}\qquad
\bigl(\begin{smallmatrix}
\Idfunc_C\\-f^{-1}
\end{smallmatrix}
\bigr)\circ \textstyle{\frac12}\bigl(
\Idfunc_C,-f
\bigr)={\textstyle\frac12}
\begin{pmatrix}
\Idfunc_C&-f\\
-f^{-1}&\Idfunc_{\conjt(C)}
\end{pmatrix}.
\]
Therefore, we may define a natural isomorphism $\zeta\colon G\natiso
\Theta F'$ as follows: for $C'=(C,f)\in\catC'$,
\begin{equation}
  \label{eq:zetadef}
  \zeta_{C'}\colon G(C')\to \Theta F'(C') \quad\text{is given by}\quad
  \zeta_{C'}=s_{F'(C')}\circ
  \bigl(\begin{smallmatrix}
\Idfunc_C\\-f^{-1}
\end{smallmatrix}
\bigr).
\end{equation}
Thus, the right-hand side diagram in~\eqref{eq:diagKM} commutes up to
a natural isomorphism.
\end{proof}

Since $\Psi\Theta\cong\Idfunc_{\catC''}$, the diagram on the left-hand
side of~\eqref{eq:diagKM} yields another commutative triangle (up to
a natural isomorphism):
\[
  \xymatrix{\catC''\ar[d]^{\wr}_{\Theta}\ar[dr]^{G'}&
    \\
    \catC\ar[r]^{F}&\catC'
  }
\]
The latter triangle and all its corresponding versions for $\catC'$,
$\catC''$, $\catC'''$, etc. can be pasted with the triangle on the
right-hand side of~\eqref{eq:diagKM} and all its corresponding
versions into the following infinite diagram:
\[
  \xymatrix{
    \catC'\ar[r]^{F'}\ar[dr]_{G}
    &
    \catC''\ar[r]^{F''}\ar[dr]^{G'}\ar[d]^{\Theta}
    &
    \catC'''\ar[r]^{F'''}\ar[dr]^{G''}\ar[d]^{\Theta'}
    &
    \catC''''\ar[r]^{F''''}\ar[dr]^{G'''}\ar[d]^{\Theta''}
    &
    \catC'''''\ar[r]^{F'''''}\ar[dr]^{G''''}\ar[d]^{\Theta'''}
    &
    \cdots
    \\
    &
    \catC\ar[r]^{F}
    &
    \catC'\ar[r]^{F'}\ar[dr]_{G}
    &
    \catC''\ar[r]^{F''}\ar[dr]^{G'}\ar[d]^{\Theta}
    &
    \catC'''\ar[r]^{F'''}\ar[dr]^{G''}\ar[d]^{\Theta'}
    &
    \cdots
    \\
    &
    &
    &
    \catC\ar[r]^{F}
    &
    \catC'\ar[r]^{F'}\ar[dr]_{G}
    &
    \cdots
    \\
    &&&&&
    \ddots
  }
\]
Retaining only the upper line and the left-most broken diagonal yields
a commutative diagram that demonstrates the periodicity of the
doubling construction:
\begin{equation}
  \label{eq:diagperiod}
  \begin{split}
    \xymatrix{
      \catC\ar[r]^{F} \ar@{=}[d]
      & \catC'\ar@{=}[d]\ar[r]^{F'}
      & \catC''\ar[d]_{\wr}^{\Theta} \ar[r]^{F''}
      & \catC''' \ar[d]_{\wr}^{\Theta'} \ar[r]^{F'''}
      & \catC'''' \ar[d]_{\wr}^{\Theta\Theta''} \ar[r]^{F''''}
      & \catC''''' \ar[d]_{\wr}^{\Theta'\Theta'''} \ar[r]^{F'''''}
      &\cdots
      \\
      \catC\ar[r]^{F}
      &
      \catC'\ar[r]^{G}
      &
      \catC\ar[r]^{F}
      &
      \catC'\ar[r]^{G}
      &
      \catC\ar[r]^{F}
      &
      \catC'\ar[r]^{G}
      &\cdots
      }
  \end{split}
\end{equation}

\subsection{Example: Modules}
\label{subsec:exconjug1}
To obtain a typical example of a category with conjugation and its
double, consider an 
arbitrary (associative) $k$-algebra $A$ and a pair $(\conjt,\llambda)$
consisting of a $k$-linear automorphism $\conjt\colon A\to A$ and a
unit $\llambda\in A^\times$ such that $\conjt(\llambda)=\llambda$ and
$\conjt^2(a)=\llambda a\llambda^{-1}$ for all $a\in A$. Define a
$k$-algebra 
\[
  A'=A\oplus Aj
\]
where the multiplication extends the multiplication in $A$ by
\[
  j^2=\llambda\qquad\text{and}\qquad ja=\conjt(a)j\quad\text{for $a\in
    A$.} 
\]
In Ranicki's terminology \cite[\S3]{R} the pair $(\conjt,\llambda)$ is
a \emph{structure} on $A$ and $A'$ is the
\emph{$(\conjt,\llambda)$-twisted quadratic extension} of $A$.

Let $\catM_A$ be the category of right
$A$-modules. Abusing notation, we write again $\conjt$ for the
endofunctor on $\catM_A$ defined as follows: for $M\in\catM_A$, we set
\[
\conjt(M)=\{{}^\conjt{x}\mid x\in M\}
\]
with the operations
\[
{}^\conjt{x}+{}^\conjt{y}={}^\conjt(x+y), \quad
{}^\conjt{x}\cdot a={}^\conjt(x\conjt(a))\qquad
\text{for $x$, $y\in M$ and $a\in A$}.
\]
For every morphism $\varphi\colon\,M\to N$, define
$\conjt(\varphi)\colon\,\conjt(M)\to\conjt(N)$ by
\[
\conjt(\varphi)({}^\conjt{x})={}^\conjt(\varphi(x))\qquad
\text{for $x\in M$}.
\]
Using the $\conjt$-fixed unit $\llambda\in A$ in the definition of
$A'$, we define a natural isomorphism
$
\lambda\colon\,\conjt\conjt\natiso\Idfunc_{\catM_{A}}
$
as follows:
\[
\lambda_{M}\colon\,\conjt\conjt(M)\to M,\quad
{}^{\conjt\conjt}x\mapsto x\llambda\qquad 
\text{for every $M\in\catM_A$ and every $x\in M$}.
\]
The pair $(\conjt,\lambda)$ is the conjugation on $\catM_{A}$ derived
from the structure $(\conjt,\llambda)$ on $A$.

Every object $(M,f)$ in the double category $\catM'_A$ can be regarded
as a right module over $A'$ by defining
\[
  x\cdot(a_1+a_2j) = xa_1+f({}^\conjt x)\conjt^{-1}(a_2)
  \qquad\text{for $x\in M$ and $a_1$, $a_2\in A$.}
\]
Under this identification, the morphisms in $\catM'_A$ are the
$A'$-linear maps.
Conversely, every right
$A'$-module $M'$ can be viewed as an object $(G(M'),f)\in\catM'_A$
where $G(M')$ is the $A$-module obtained from $M'$ by restriction of
scalars and
\[
  f\colon\conjt G(M')\to G(M'),\qquad {}^\conjt x\mapsto xj
  \qquad\text{for $x\in G(M')$.}
\]
Therefore, $\catM'_A$ is isomorphic to the category of right
$A'$-modules; henceforth we identify these 
categories: $\catM'_A=\catM_{A'}$. The endofunctor $\conjt'$ on
$\catM_{A'}$ is then derived from the automorphism
\[
  \conjt'\colon A'\to A',\qquad
  a_1+a_2j\mapsto \conjt(a_1)-\conjt(a_2)j \qquad\text{for $a_1$,
    $a_2\in A$,}
\]
just like the endofunctor $\conjt$ on $\catM_A$ is derived from the
automorphism $\conjt$  of $A$. The natural isomorphism
$\lambda'\colon\conjt'\conjt'\natiso\Idfunc_{\catM_{A'}}$ is given
by $\lambda'_{M'}=-\lambda_{G(M')}$ for $M'\in\catM_{A'}$, hence
\[
  \lambda'_{M'}({}^{\conjt'\conjt'}x)= -x\llambda\qquad\text{for $x\in
    M'$}. 
\]
Therefore, $\catM_A''$, the double category of $\catM'_A=\catM_{A'}$,
can be identified with the category $\catM_{A''}$ of right modules
over the $k$-algebra $A''=A'\oplus A'j'$, where
\[
  {j'}^2=-\llambda\qquad\text{and}\qquad j'a'=\conjt'(a')j'
  \qquad\text{for $a'\in A'$.}
\]
The conjugation $(\conjt',\lambda')$ on $\catM_{A'}$ is thus derived
from the structure $(\conjt',\llambda')$ on $A'$ with the unit
$\llambda'=-\llambda\in {A'}^\times$, and the $k$-algebra $A''$ is the
$(\conjt',\llambda')$-twisted quadratic extension of $A'$.

Under the identification $\catM'_A=\catM_{A'}$, the functor
$F\colon\catM_A\to\catM_{A'}$ is the scalar extension: for
$M\in\catM_A$, the object $(M\oplus\conjt(M),\Lambda_P)$ is identified
with $M\otimes_AA'$ under the map
\[
  (x,\,{}^\conjt y)\mapsto x\otimes1+y\otimes j \qquad\text{for $x$,
    $y\in M$.}
\]
Moreover, under the identification $\catM''_A=\catM_{A''}$, the
equivalence $\catM_A\equiv\catM_{A''}$ afforded 
by the functors $\Theta$ and $\Psi$ is a Morita equivalence. To see
this, note 
that $j^{-1}j'\in A''$ satisfies $(j^{-1}j')^2=1$, hence the elements
$
e_+=\textstyle{\frac12}(1+j^{-1}j')
$
and
$
  e_-=\textstyle{\frac12}(1-j^{-1}j')
$
are orthogonal idempotents in $A''$. For $M''\in\catM_{A''}$ and
$M\in \catM_A$ we may identify $\Theta(M'')=M''\otimes_{A''}A''e_-$
and 
$\Psi(M)=M\otimes_Ae_-A''$. We omit the proof, as this result is not
used subsequently.
\medbreak

Of interest for the sequel are two subcategories of $\catM_A$ that are
preserved under $\conjt$: let $\catP_A$ (resp.\ $\catF_A$) be the full
subcategory of finitely generated projective modules (resp.\ of
modules of finite length) in $\catM_A$. Since $A'$ is a free module of
finite rank over $A$, for every module $P'\in\catP_{A'}$ the
$A$-module $G(P')$ lies in $\catP_A$. Conversely, if $M'\in\catM_{A'}$
is such that $G(M')\in\catP_A$, then the isomorphism
$M'\oplus\conjt'(M')\simeq FG(M')=G(M')\otimes_AA'$ from
Proposition~\ref{prop:FG} shows that $M'$ is a direct summand of a
finitely generated projective $A'$-module, hence
$M'\in\catP_{A'}$. Therefore, 
the identification $\catM'_A=\catM_{A'}$ restricts to
$\catP'_A=\catP_{A'}$.

Similarly, $\catM'_A=\catM_{A'}$ yields $\catF'_{A}=\catF_{A'}$: to
see this, note that $G$ turns every repetition-free chain of
submodules in a module 
$M'\in\catM_{A'}$ into a repetition-free chain of submodules in
$G(M')$. Therefore, if 
$G(M')\in\catF_A$, then $M'\in\catF_{A'}$. Conversely, if
$M'\in\catF_{A'}$ then $G(M')\otimes_AA'\in\catF_{A'}$ by
Proposition~\ref{prop:FG}, hence $G(M')\in\catF_A$ since $F$ turns
every repetition-free chain of submodules in $G(M')$ into a
repetition-free chain of submodules in 
$G(M')\otimes_AA'$. 

\begin{remark}
  \label{rem:bimod}
  The example discussed in this subsection is not the most general
  conjugation on a category of modules: in a general conjugation
  $(\conjt, \lambda)$ on $\catM_A$ the functor $\conjt$ is an
  auto-equivalence of $\catM_A$, hence Morita theory shows that, up to
  isomorphism, $\conjt$ is given by
  \[
    \conjt(M)=M\otimes_AP \qquad\text{for some invertible
      $A$-$A$-bimodule $P$.}
  \]
  (See~\cite[Ch.~II, \S5]{Bass}.) The isomorphism $\lambda$ is given
  by
  \[
    \lambda_M\colon M\otimes_AP\otimes_AP \to M, \qquad
    m\otimes p\otimes q\mapsto m\,\beta(p\otimes q)
  \]
  for some isomorphism of bimodules $\beta\colon P\otimes_AP\to A$
  satisfying the associativity condition $\beta(p_1\otimes p_2)\otimes
  p_3 = p_1\beta(p_2\otimes p_3)$ for $p_1$, $p_2$, $p_3\in
  P$. According to~\cite[Prop.~5.2, p.~73]{Bass}, the example
  discussed above is the special case where $P$ is isomorphic to $A$
  as a left $A$-module. It then yields the automorphism of $A$ denoted
  (abusively) by $\conjt$ above because, up to isomorphism,
  $P=\{{}_1x_\conjt \mid x\in A\}$ where $a\cdot{}_1x_\conjt =
  {}_1ax_\conjt$ and ${}_1x_\conjt\cdot a={}_1x\conjt(a)_\conjt$ for
  $x$, $a\in A$. The map $\beta\colon P\otimes_AP\to A$ is then given
  by $\beta({}_1x_\conjt\otimes{}_1y_\conjt) = x\conjt(y)\lambda$
  where $\lambda\in A^\times$ is subject to $\conjt(\lambda)=\lambda$
  and $\conjt^2(a)=\lambda a\lambda^{-1}$ for all $a\in A$.

  When $A$ is commutative, the pairs $(P,\beta)$ that yield a
  conjugation on $\catM_A$ with the same action of $A$ on $P$ on the
  left and on the right are exactly the discriminant modules discussed
  in~\cite[Ch.~III, \S3]{Knus}. Since $2$ is invertible in $A$, these
  discriminant modules are in one-to-one correspondence with the
  separable quadratic $A$-algebras,
  see~\cite[Prop.~III(4.2.4)]{Knus}. Therefore, every 
  separable quadratic $A$-algebra arises as $A'$ for a suitable
  conjugation $(\conjt,\lambda)$ on $\catM_A$.
\end{remark}

\section{Conjugations and dualities}
\label{sec:dual}

Recall from~\cite[\S1.2.1]{Balmer} that a \emph{duality} on a
category $\catC$ is a pair $(D,\delta)$ consisting of a 
contravariant functor
$
  D\colon\,\catC\to\catC
$
and a natural isomorphism
$
  \delta\colon\,\Idfunc_{\catC}\natiso D D 
$
such that $(\Idnt_{D}\god\delta)\circ(\delta\god\Idnt_{D})=\Idnt_{D}$,
i.e.,\ for every object $C$ in $\catC$,
\[
\delta_{D(C)}=D(\delta_{C})^{-1}\colon\,D(C)\to
DDD(C).
\]
Note that for any duality $(D,\delta)$, the pair $(D,-\delta)$ also is
a duality.

A \emph{morphism} of categories with duality
$(\catC_{1},D_{1},\delta_{1})\to(\catC_{2},D_{2},\delta_{2})$ 
is a pair $(R,\theta)$ consisting of a functor $R\colon\,\catC_{1}\to\catC_{2}$
and a natural isomorphism $\theta\colon\,R D_{1}\natiso D_{2}
R$ 
such that the following diagram commutes:
\[
\xymatrix{
R
\ar@{=>}[r]^{\Idnt_{R}\god\delta_{1}}
\ar@{=>}[d]_{\delta_{2}\god\Idnt_{R}}
&
R{D_{1}}{D_{1}}
\ar@{=>}[d]^{\theta\god\Idnt_{D_{1}}}
\\
D_{2}D_{2} R
\ar@{=>}[r]_{\Idnt_{D_{2}}\god\theta}
&
D_{2} R D_{1}
}
\]

Throughout this section, we only consider $k$-linear categories
subject to the same conditions as in~\S\ref{sec:double}: $k$ is a
commutative ring in which $2$ is invertible, and every idempotent
morphism splits. All the functors we consider are $k$-linear. A notion
of \emph{similitude} of morphisms of 
categories with duality is defined as follows: morphisms
$\widetilde R=(R,\theta)$ and $\widetilde S=(S,\tau)$ from
$(\catC_1,D_1,\delta_1)$ to $(\catC_2,D_2,\delta_2)$ are said to be
\emph{similar} if there exists a natural isomorphism $\rho\colon
R\natiso S$ and a invertible scalar $\mu\in k^\times$ such that the
following diagram commutes:
\[
  \xymatrix{
    RD_1\ar@{=>}[d]_{\rho\god\Idnt_{D_1}} \ar@{=>}[r]^{\theta}
    & D_2R \ar@{=>}[rr]^{\mu\,\Idnt_{D_2R}}&& D_2R\\
    SD_1\ar@{=>}[rrr]^{\tau}&&&D_2S\ar@{=>}[u]_{\Idnt_{D_2}\god\rho}
  }
\]
i.e., for $C\in\catC_1$,
\[
  D_2(\rho_C)\circ\tau_C\circ\rho_{D_1(C)}=\mu\,\theta_C.
\]
The natural isomorphism $\rho$ is then said to be a \emph{similitude}
$\widetilde R \Rightarrow\widetilde S$ with \emph{multiplier}
$\mu$. Similitudes 
with multiplier~$1$ are \emph{natural isomorphisms} of morphisms of
categories with duality.

Diagram chase establishes the following multiplicativity property: for
any similitude $\rho_1\colon\widetilde R_1\Rightarrow \widetilde S_1$
of morphisms from $(\catC_1,D_1,\delta_1)$ to $(\catC_2,D_2,\delta_2)$
with multiplier $\mu_1$ and any similitude $\rho_2\colon\widetilde
R_2\Rightarrow \widetilde S_2$ of morphisms from
$(\catC_2,D_2,\delta_2)$ to $(\catC_3,D_3,\delta_3)$ with multiplier
$\mu_2$, the product $\rho_2\god\rho_1$ is a similitude $\widetilde
R_2\widetilde R_1\Rightarrow \widetilde S_2\widetilde S_1$ with
multiplier $\mu_1\mu_2$.

\subsection{Commuting conjugations and dualities}

A \emph{commutation} between a conjugation $(\conjt,\lambda)$ and a
duality 
$(D,\delta)$ on a category $\catC$ is a natural isomorphism
\[
  \alpha\colon\,\conjt D\natiso D\conjt
\]
such that the following diagrams commute:
\begin{equation}
  \label{conjdual.cd}
  \begin{split}
  \xymatrix{
  \conjt\ar@{=>}[r]^{\Idnt_{\conjt}\god\delta}
  \ar@{=>}[d]_{\delta\god\Idnt_{\conjt}}&
  \conjt{D}{D}
  \ar@{=>}[d]^{\alpha\god\Idnt_{D}}
  \\
  {D}{D}\conjt
  \ar@{=>}[r]_{\Idnt_{D}\god\alpha}&
  D\conjt D
  }
  \qquad\qquad
  \xymatrix{
  D
  \ar@{=>}[r]^{\Idnt_{D}\god\lambda}
  \ar@{=>}[d]_{\lambda^{-1}\god\Idnt_{D}}
  &
  D\conjt\conjt
  \ar@{=>}[d]^{\alpha^{-1}\god\Idnt_{\conjt}}
  \\
  \conjt\conjt D
  \ar@{=>}[r]_{\Idnt_{\conjt}\god\alpha}
  &
  \conjt{D}\conjt
}
\end{split}
\end{equation}

Note that a commutation between $(\conjt,\lambda)$ and $(D,\delta)$
also defines a commutation between $(\conjt,\lambda)$ and
$(D,-\delta)$.
Commuting conjugations and dualities on a category $\catC$ yield new
dualities on $\catC$ and on the double category $\catC'$ as follows:

\begin{proposition}
  \label{prop:altdual}
  Given a conjugation $(\conjt,\lambda)$ and a duality
  $(D,\delta)$ with commutation $\alpha$ on a category $\catC$,
  dualities $(\conjt D,\delta_\alpha)$ and
  $(D\conjt,\delta_{\alpha^{-1}})$ are defined by
  \begin{align*}
    \delta_\alpha&=(\Idnt_\conjt\god\alpha\god\Idnt_D)\circ
    (\lambda^{-1}\god\Idnt_{DD})\circ\delta\colon
    \Idfunc_\catC\natiso\conjt D\conjt D
    \\
    \delta_{\alpha^{-1}}&=(\Idnt_D\god\alpha\god\Idnt_\conjt) \circ
        (\delta\god\Idnt_{\conjt\conjt}) \circ \lambda^{-1}\colon
                          \Idfunc_\catC\natiso D\conjt D\conjt.
  \end{align*}
  Moreover, $(\Idfunc_\catC,\alpha)$ is an isomorphism of categories
  with duality $(\catC,\conjt D,\delta_\alpha) \isom (\catC,D\conjt,
  \delta_{\alpha^{-1}})$. 
\end{proposition}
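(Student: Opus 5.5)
The plan is to reduce everything to a single triangle-identity check, for $(\conjt D,\delta_\alpha)$. The rest follows by transport along $\alpha$.

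First I would record that the left-hand square in~\eqref{conjdual.cd} says exactly that $(\conjt,\alpha)$ is a morphism of categories with duality $(\catC,D,\delta)\to(\catC,D,\delta)$. The right-hand square gives the compatibility of $\alpha$ with $\lambda$. In components, for $C\in\catC$,
\[
(\delta_\alpha)_C=\conjt(\alpha_{D(C)})\circ\lambda^{-1}_{DD(C)}\circ\delta_C,
\qquad
(\delta_{\alpha^{-1}})_C=D(\alpha_{\conjt(C)})\circ\delta_{\conjt\conjt(C)}\circ\lambda_C^{-1}.
\]
Both are isomorphisms and are natural in $C$ by naturality of $\lambda$, $\delta$ and $\alpha$. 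The next two steps are therefore the duality axiom for $\delta_\alpha$ and the morphism condition for $(\Idfunc_\catC,\alpha)$.

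\textbf{Step 1 (main obstacle).} I will check
\[
(\delta_\alpha)_{\conjt D(C)}\circ\conjt D\bigl((\delta_\alpha)_C\bigr)=\Idfunc_{\conjt D(C)} .
\]
Expand both factors and move all copies of $\lambda$ to one end using naturality of $\lambda^{-1}$ together with $\lambda_{\conjt(X)}=\conjt(\lambda_X)$. Next, rewrite $\conjt(\delta_{D(C)})$ via the left square of~\eqref{conjdual.cd} as $D(\alpha_{D(C)})^{-1}\circ\alpha_{DD(C)}^{-1}\circ\delta_{\conjt D(C)}$, suitably composed; this is needed because $\conjt D$ contains $D$ applied to terms involving $\alpha$. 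The factor $\conjt D(\lambda^{-1}_{DD(C)})$ is then traded via the right square of~\eqref{conjdual.cd} for a $\lambda$ acting on the outside of $D$, with $\alpha$'s attached. After this the $\alpha$'s should cancel in pairs and the two $\lambda$'s should cancel. What remains is $\conjt$ applied to $\delta_{D(C)}\circ D(\delta_C)$, which is the identity by the duality axiom for $(D,\delta)$. The bookkeeping of contravariance, i.e.\ which way each arrow points after $D$ is applied, is where I expect the difficulty. If the direct chase becomes unwieldy, I would first try to phrase it with Godement products, using the interchange law, since all the ingredients are Godement composites of $\alpha$, $\lambda$ and $\delta$.

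\textbf{Step 2.} I will verify the morphism condition for $(\Idfunc_\catC,\alpha)$, namely
\[
\alpha_{\conjt D(C)}\circ(\delta_\alpha)_C=D\conjt(\alpha_C)\circ(\delta_{\alpha^{-1}})_C
\quad\text{as maps } C\to D\conjt\conjt D(C).
\]
Substitute the formulas above. Use naturality of $\alpha$ to move $\alpha_{\conjt D(C)}$ past $\conjt(\alpha_{D(C)})$, and naturality of $\delta$ to move $\delta$ past $\lambda^{-1}$. The two sides should then coincide by the right square of~\eqref{conjdual.cd} applied at $D(C)$, together with the left square to exchange $\conjt(\delta)$ against $\delta_{\conjt}$. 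This step is shorter than Step 1.

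\textbf{Step 3.} I will use a general transport fact. Let $(D_1,\delta_1)$ be a duality and $\theta\colon D_1\natiso D_2$ an isomorphism of contravariant functors. Define $\delta_2$ by requiring the square in the definition of a morphism of categories with duality to commute for $(\Idfunc,\theta)$. Then $(D_2,\delta_2)$ is a duality; this follows from a short formal computation that conjugates the axiom for $\delta_1$ by $\theta$.

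By Step 2, the $\delta_2$ obtained from $\delta_1=\delta_\alpha$ and $\theta=\alpha$ is exactly $\delta_{\alpha^{-1}}$. Step 3 then shows that $(D\conjt,\delta_{\alpha^{-1}})$ is a duality, with no second triangle-identity chase needed. Step 2 also shows that $(\Idfunc_\catC,\alpha)$ is a morphism of categories with duality. It is an isomorphism because its underlying functor is the identity and $\alpha$ is invertible: $(\Idfunc_\catC,\alpha^{-1})$ is an inverse morphism, since the square for $\alpha^{-1}$ is obtained by inverting the one for $\alpha$.
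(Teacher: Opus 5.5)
Your proposal is correct. The paper gives no argument for this proposition beyond calling it a lengthy but straightforward computation, which presumably means checking both triangle identities and the morphism square directly. Your route differs in that only the triangle identity for $\delta_\alpha$ is checked by hand. The identity for $\delta_{\alpha^{-1}}$ then comes from your transport lemma (Step~3), once Step~2 identifies $\delta_{\alpha^{-1}}$ as the transported structure. That identification is legitimate because $D\conjt(\alpha_C)$ is invertible, so the square determines the transported $\delta_2$ uniquely.

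This is a real saving, because Step~2 is short: both sides reduce to $D(\lambda_{D(C)})\circ\delta_C$. On one side this uses the right square of~\eqref{conjdual.cd} at $D(C)$; on the other it uses that square at $C$ together with naturality of $\delta$. The direct approach keeps the two dualities on an equal footing, but it costs a second chase as long as Step~1.

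Your Step~1 chase does close, in this order:
\begin{itemize}
\item naturality of $\alpha$ with respect to $\alpha_{D(C)}$, and naturality of $\lambda^{-1}$;
\item the left square at $D(C)$;
\item the right square at $\conjt DD(C)$;
\item $\lambda_{\conjt(X)}=\conjt(\lambda_X)$ together with naturality of $\alpha$ with respect to $\lambda_{DD(C)}$.
\end{itemize}
These reduce the composite to $\conjt\bigl(D(\delta_C)\circ\delta_{D(C)}\bigr)=\Idfunc_{\conjt D(C)}$.

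A few slips need fixing when you write it out.
\begin{itemize}
\item The identity in Step~1 should read $\conjt D\bigl((\delta_\alpha)_C\bigr)\circ(\delta_\alpha)_{\conjt D(C)}=\Idfunc_{\conjt D(C)}$. As you wrote it, the composite is an endomorphism of $(\conjt D)^3(C)$. The two versions are equivalent since both factors are isomorphisms, but the chase is cleanest in the order above.
\item The left square gives $\conjt(\delta_{D(C)})=\alpha_{DD(C)}^{-1}\circ D(\alpha_{D(C)})\circ\delta_{\conjt D(C)}$, with no inverse on $D(\alpha_{D(C)})$. Your version does not even compose.
\item The morphism square for $(\Idfunc_\catC,\alpha^{-1})$ is not literally the inverse of the square for $\alpha$. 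It involves $\alpha^{-1}_{D\conjt(C)}$ and $\conjt D(\alpha_C^{-1})$, not $\alpha_{\conjt D(C)}^{-1}$ and $D\conjt(\alpha_C)^{-1}$. It follows from Step~2 combined with naturality of $\alpha$ with respect to $\alpha_C^{-1}$.
\end{itemize}
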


\begin{proposition}
  \label{prop:doubledual}
  Given a conjugation $(\conjt,\lambda)$ and a duality
  $(D,\delta)$ with commutation $\alpha$ on a category $\catC$, a
  duality $(D',\delta')$ on the double category $\catC'$ is defined as
  follows: for every
$(C,f)\in\catC'$ and every morphism $\varphi$ in $\catC'$,
\[
D'(C,f)=(D(C),-D(f)^{-1}\circ\alpha_{C}),\qquad
D'(\varphi)=D(\varphi),\qquad
\delta'_{(C,f)}=\delta_{C}.
\]
Moreover, setting $\alpha'_{(C,f)}=-\alpha_{C}$ for $(C,f)\in\catC'$
defines a natural isomorphism 
\[
\alpha'\colon\,\conjt' D'\natiso D'\conjt',
 \]
 which is a commutation between $(\conjt',\lambda')$ and
 $(D',\delta')$. 
\end{proposition}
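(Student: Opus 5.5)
We have to check, in order: (i) $D'(C,f)$ is an object of $\catC'$; (ii) $D'$ is a contravariant functor on $\catC'$; (iii) $\delta'$ consists of morphisms of $\catC'$, is natural, and satisfies the duality axiom; (iv) $\alpha'$ consists of morphisms of $\catC'$, is natural, and satisfies the two squares \eqref{conjdual.cd}. Everything reduces to identities in $\catC$ among $f$, $\lambda$, $\alpha$, $\delta$, using the object condition $f\circ\conjt(f)=\lambda_C$ and the commutation diagrams. Since $\delta'$ and $\alpha'$ are defined componentwise by $\delta$ and $-\alpha$, their naturality and the duality axiom $\delta'_{D'(C,f)}=D'(\delta'_{(C,f)})^{-1}$ are inherited from $\catC$. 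The same holds for \eqref{conjdual.cd} for $\alpha'$: in the first square the sign of $\alpha'$ appears twice, and in the second square the sign of $\alpha'$ appears twice while $\lambda'=-\lambda$ appears once on each side. So the real content is the "morphism in $\catC'$" conditions.

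For (i), set $g=-D(f)^{-1}\alpha_C\colon \conjt D(C)\to D(C)$, an isomorphism. We must show $g\circ\conjt(g)=\lambda_{D(C)}$, i.e.
\[
D(f)^{-1}\,\alpha_C\,\conjt(D(f))^{-1}\,\conjt(\alpha_C)=\lambda_{D(C)}.
\]
First, naturality of $\alpha$ applied to $f\colon\conjt(C)\to C$ gives $\alpha_C\circ\conjt D(f)=D\conjt(f)\circ\alpha_{\conjt(C)}$. This lets us move $\conjt(D(f))^{-1}$ past $\alpha_C$, which produces $D(\conjt(f))^{-1}=D(\conjt f)^{-1}$ next to $D(f)^{-1}$. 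These combine into $D(f\circ\conjt(f))^{-1}=D(\lambda_C)^{-1}$. What remains is $D(\lambda_C)^{-1}\alpha_{\conjt(C)}\conjt(\alpha_C)=\lambda_{D(C)}$, which is precisely the second square of \eqref{conjdual.cd} evaluated at $C$. Keeping track of the contravariance of $D$ here is the main bookkeeping obstacle. For (ii), if $\varphi f_1=f_2\conjt(\varphi)$, then applying $D$ and using naturality of $\alpha$ gives $D(\varphi)\circ g_2=g_1\circ\conjt(D\varphi)$, so $D(\varphi)$ is a morphism $D'(C_2,f_2)\to D'(C_1,f_1)$.

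For (iii), we must show that $\delta_C\colon(C,f)\to D'D'(C,f)$ is a morphism in $\catC'$. Write $D'D'(C,f)=(DD(C),h)$ with $h=-D(g)^{-1}\alpha_{D(C)}=D(\alpha_C)^{-1}DD(f)\,\alpha_{D(C)}$. The condition is $\delta_C f=h\,\conjt(\delta_C)$. Naturality of $\delta$ gives $\delta_C f=DD(f)\,\delta_{\conjt(C)}$. The required identity then reduces to $D(\alpha_C)\,\delta_{\conjt C}=\alpha_{D(C)}\,\conjt(\delta_C)$, which is the first square of \eqref{conjdual.cd}.

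For (iv), we must show that $-\alpha_C$ is a morphism in $\catC'$ from $\conjt'D'(C,f)=(\conjt D(C),\,-\conjt(g))$ to $D'\conjt'(C,f)=(D\conjt(C),\,D(\conjt f)^{-1}\alpha_{\conjt C})$. The overall sign cancels, so the condition is
\[
\alpha_C\circ\conjt\bigl(D(f)^{-1}\alpha_C\bigr)=D(\conjt f)^{-1}\alpha_{\conjt C}\circ\conjt(\alpha_C).
\]
Using naturality of $\alpha$ again (as in (i)) on the left-hand side, this becomes $D(\conjt f)^{-1}\alpha_{\conjt C}\conjt(\alpha_C)$, which is exactly the right-hand side. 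Naturality of $\alpha'$ is immediate. So the only nontrivial inputs are the two commutation squares, used once each in (i) and (iii).
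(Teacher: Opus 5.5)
Your verification is correct and is exactly the direct computation the paper has in mind; the authors omit it as ``lengthy but straightforward.'' Two of your formulas have their factors misplaced, which is the contravariance bookkeeping you flagged: naturality of $\alpha$ at $f$ should read $\alpha_{\conjt(C)}\circ\conjt D(f)=D\conjt(f)\circ\alpha_C$, and the structure map should be $h=DD(f)\circ D(\alpha_C)^{-1}\circ\alpha_{D(C)}$; these are transcription slips only, since the reductions you draw in (i), (iii) and (iv) are the correct ones for the corrected formulas.
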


We omit the proofs of Propositions~\ref{prop:altdual} and
\ref{prop:doubledual}, which consist in lengthy but straightforward
computations. 
\medbreak

Recall from \S~\ref{subsec:def} the functor
$F\colon\,\catC\to\catC'$. 
Define natural isomorphisms
\[
  \widehat\phi\colon\,FD\natiso D'F\qquad\text{and}\qquad
  \widecheck\phi\colon\,F\conjt D\natiso \conjt' D'F
\]
as follows: for $C\in\catC$,
\[
\widehat\phi_{C}=
\begin{pmatrix}
\Idfunc_{D(C)}&0\\
0&-\alpha_{C}
\end{pmatrix}
\qquad\text{and}\qquad
\widecheck\phi_{C}=
\begin{pmatrix}
\Idfunc_{\conjt D(C)}&0\\
0&\conjt(\alpha_{C})
\end{pmatrix}.
\]
Computation shows that the pairs $\widehat F=(F,\widehat\phi)$ and
$\widecheck F=(F,\widecheck\phi)$ are
morphisms of categories with duality
\[
\widehat F\colon\,(\catC,D,\pm\delta)\to
(\catC',D',\pm\delta')
\qquad\text{and}\qquad
\widecheck F\colon\,(\catC,\conjt D,\pm\delta_\alpha) \to
(\catC',\conjt'D',\pm\delta'_{\alpha'}).
\]

Iterating the $\widehat F$ construction, we obtain an infinite
sequence of 
morphisms of categories with duality:
\begin{equation}
\label{cat.seq}
(\catC,D,\delta)\xrightarrow{\widehat F}
(\catC',D',\delta')\xrightarrow{\widehat F'}
(\catC'',D'',\delta'')\xrightarrow{\widehat F''}\cdots
\end{equation}

Morphisms in the reverse direction are provided by the
forgetful functor $G\colon\catC'\to\catC$, but they involve different
dualities: it is easily checked that 
natural isomorphisms $\widehat\gamma\colon GD'\natiso \conjt DG$ and
$\widecheck\gamma\colon G\conjt'D'\natiso DG$ are defined by
\[
  \widehat\gamma_{(C,f)}= \alpha_C^{-1}\circ D(f)
  \qquad\text{and}\qquad
  \widecheck\gamma_{(C,f)}= D(f^{-1})\circ\alpha_C
  \qquad\text{for $(C,f)\in\catC'$}.
\]
The pairs $\widehat G=(G,\widehat\gamma)$ and $\widecheck
G=(G,\widecheck\gamma)$ are morphisms of categories with duality:
\[
  \widehat G\colon(\catC',D',\pm\delta')\to
  (\catC,\conjt D,\mp \delta_\alpha)
  \qquad\text{and}\qquad
  \widecheck G\colon (\catC',\conjt'
  D',\pm\delta'_{\alpha'}) \to (\catC,D, \pm\delta).
\]
This change of dualities (and signs) explains why the
sequence~\eqref{cat.seq} 
turns out to be $8$-periodic, in contrast with the $2$-periodicity of
the sequence~\eqref{eq:diagperiod}.

\subsection{Periodicity}
\label{subsec:periodual}

Continuing with the same notation as in the preceding section, we turn
the equivalence of categories $\catC\equiv\catC''$ afforded by the
functors $\Theta$ and $\Psi$ of \S\ref{subsec:periodconj} into
equivalences of categories with duality.

Natural isomorphisms $\widehat\psi\colon\Psi D\natiso
  \conjt''D''\Psi$ and $\widecheck\psi\colon \Psi\conjt D\natiso
  D''\Psi$ are defined by the following formulas, for $C\in\catC$:
  \[
    \widehat\psi_C=
    \begin{pmatrix}
      0&\Idfunc_{\conjt D(C)}\\
      \conjt(\alpha_C)\circ\lambda_{D(C)}^{-1}&0
    \end{pmatrix}
    \qquad\text{and}\qquad
    \widecheck\psi_C=
    \begin{pmatrix}
      0&\lambda_{D(C)}\\
      -\alpha_C&0
    \end{pmatrix}
    .
  \]
  The pairs $\widehat\Psi=(\Psi,\widehat\psi)$ and
  $\widecheck\Psi=(\Psi,\widecheck\psi)$ are morphisms of categories
  with duality
  \[
    \widehat\Psi\colon(\catC,D,\pm\delta) \to (\catC'',\conjt''D'',
    \pm\delta_{\alpha''}'')
    \qquad\text{and}\qquad
    \widecheck\Psi\colon(\catC,\conjt D,\pm\delta_\alpha) \to
    (\catC'',D'',\mp\delta'').
  \]
Similarly, natural isomorphisms $\widehat\theta\colon \Theta D''
\natiso \conjt D\Theta$ and $\widecheck\theta\colon
\Theta\conjt''D''\natiso D\Theta$ are defined as follows: for
$C''=(C,f,g)\in\catC''$,
\[
  \widehat\theta_{C''}=-\conjt D(t_{C''}) \circ\alpha_C^{-1}\circ
  D(f)\circ t_{D''(C'')}\qquad\text{and}\qquad
  \widecheck\theta_{C''}=D(t_{C''})\circ D(f^{-1})\circ \alpha_C\circ
  t_{\conjt''D''(C'')}.
\]
The pairs $\widehat\Theta=(\Theta,\widehat\theta)$ and
$\widecheck\Theta=(\Theta,\widecheck\theta)$ are morphisms of
categories with duality
\[
  \widehat\Theta\colon(\catC'',D'',\pm\delta'')\to (\catC, \conjt D,
  \mp\delta_\alpha)
  \qquad\text{and}\qquad
  \widecheck\Theta\colon(\catC'',\conjt''D'',\pm\delta''_{\alpha''})
  \to (\catC,D,\pm\delta).
\]

\begin{theorem}
  \label{thm:PsiTheta}
  The morphisms $\widehat\Psi$, $\widecheck\Psi$ and
  $\widehat\Theta$, $\widecheck\Theta$ define equivalences of
  categories with duality
  \[
    \xymatrix{(\catC,D,\pm\delta)
  \ar@<0.5ex>[r]^-{\widehat\Psi}
  &
\ar@<0.5ex>[l]^-{\widecheck\Theta}
(\catC'',\conjt''D'',\pm\delta_{\alpha''}'')}
\qquad\text{and}\qquad
\xymatrix{(\catC,\conjt D,\pm\delta_\alpha)
  \ar@<0.5ex>[r]^-{\widecheck\Psi}
  &
\ar@<0.5ex>[l]^-{\widehat\Theta}
(\catC'',D'',\mp\delta'')}.
\]
Moreover, the following diagrams commute:
\begin{equation}
  \begin{split}
    \label{eq:diagFG'Psi}
  \xymatrix{
    (\catC'',\conjt''D'',\pm\delta_{\alpha''}'')
    \ar[dr]^{\widecheck G'}
    &
    \\
    (\catC,D,\pm\delta)\ar[r]_-{\widehat F}\ar[u]^{\widehat\Psi}_{\wr}&
    (\catC',D',\pm\delta')
  }
  \quad\quad
  \xymatrix{
    (\catC'',D'',\mp\delta'')
    \ar[dr]^{\widehat G'}
    &
    \\
    (\catC,\conjt D,\pm\delta_\alpha)\ar[r]_-{\widecheck F}
    \ar[u]^{\widecheck\Psi}_{\wr}&
    (\catC',\conjt'D',\pm\delta_{\alpha'}')
  }
  \end{split}
\end{equation}
and the following diagrams commute up to similitude:
\begin{equation}
  \begin{split}
    \label{eq:diagGF'Theta}
  \xymatrix{
    (\catC',D',\pm\delta')\ar[r]^-{\widehat F'}
    \ar[dr]_{\widehat G}
    &
    (\catC'',D'',\pm\delta'')
    \ar[d]^{\widehat\Theta}_{\wr}
    \\
    &
    (\catC,\conjt D,\mp\delta_\alpha)}
  \quad\quad
  \xymatrix{
    (\catC',\conjt'D',\pm\delta'_{\alpha'})\ar[r]^-{\widecheck F'}
    \ar[dr]_{\widecheck G}
    &
    (\catC'',\conjt''D'',\pm\delta''_{\alpha''})
    \ar[d]^{\widecheck\Theta}_{\wr}
    \\
    &
    (\catC,D,\mp\delta)}
  \end{split}
\end{equation}
\end{theorem}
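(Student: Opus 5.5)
We build directly on the equivalence $(\catC,\conjt,\lambda)\equiv(\catC'',\conjt'',\lambda'')$ proved in \S\ref{subsec:periodconj}, which gave natural isomorphisms $\xi\colon\Idfunc_\catC\natiso\Theta\Psi$ and $\eta\colon\Idfunc_{\catC''}\natiso\Psi\Theta$ by the explicit formulas \eqref{eq:xidef} and \eqref{eq:etadef}. An equivalence of categories with duality is a pair of morphisms of categories with duality whose composites are isomorphic to the identities \emph{as morphisms of categories with duality}, i.e.\ via natural isomorphisms with multiplier~$1$. Since the underlying functors of $\widehat\Psi$, $\widecheck\Theta$ (resp.\ $\widecheck\Psi$, $\widehat\Theta$) are $\Psi$ and $\Theta$, we will show that $\xi$ and $\eta$ themselves are isomorphisms of morphisms of categories with duality:
\[
\xi\colon\Idfunc\Rightarrow\widecheck\Theta\widehat\Psi,\qquad
\eta\colon\Idfunc\Rightarrow\widehat\Psi\widecheck\Theta,
\]
and similarly for $\widehat\Theta\widecheck\Psi$ and $\widecheck\Psi\widehat\Theta$. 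The composite $\widecheck\Theta\widehat\Psi$ has structure isomorphism $(\Idnt_D\god\psi\text{-part})\circ\ldots$, namely
\[
\widecheck\theta_{\Psi(C)}\circ\Theta(\widehat\psi_C)\colon \Theta\Psi D(C)\to D\Theta\Psi(C).
\]
The condition to check is
\[
D(\xi_C)\circ\widecheck\theta_{\Psi(C)}\circ\Theta(\widehat\psi_C)\circ\xi_{D(C)}=\Idfunc_{D(C)}.
\]
Here we substitute $\Psi(C)=(C\oplus\conjt C,\Lambda_C,\Delta_C)$, for which $t_{\Psi(C)}=\bigl(\begin{smallmatrix}\Idfunc\\0\end{smallmatrix}\bigr)$ and $s_{\Psi(C)}=(\Idfunc,0)$ may be chosen. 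The composite then reduces to a product of $2\times2$ matrices whose $(1,1)$ entry is an identity, using $\alpha$, $\lambda$ and the commutation axioms \eqref{conjdual.cd}. The $\eta$ checks are similar, but they require the general splitting data $s_{C''},t_{C''}$. There we rely on the identities \eqref{efgs1.eq}, \eqref{efgs2.eq} and the vanishing of $\conjt(s_{C''})f^{-1}t_{C''}$ and $s_{C''}f\conjt(t_{C''})$ to reduce block matrices. The sign variants $\pm\delta$ come for free, since the structure isomorphisms do not involve $\delta$.

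For the triangles \eqref{eq:diagFG'Psi}, we already know $G'\Psi=F$ on the nose. It remains to verify the equality of structure isomorphisms:
\[
\widecheck\gamma'_{\Psi(C)}\circ G'(\widehat\psi_C)=\widehat\phi_C
\qquad\text{and}\qquad
\widehat\gamma'_{\Psi(C)}\circ G'(\widecheck\psi_C)=\widecheck\phi_C.
\]
Both sides are explicit $2\times2$ matrices on $D(C)\oplus D\conjt(C)$ (after identifying $D(C\oplus\conjt C)$ with $D(C)\oplus D\conjt(C)$). For instance,
\[
\widecheck\gamma'_{(F(C),\Delta_C)}=D'(\Delta_C^{-1})\circ\alpha'_{F(C)}=-D(\Delta_C^{-1})\circ\alpha_{F(C)}.
\]
Composing with $\widehat\psi_C$ and simplifying via the second diagram in \eqref{conjdual.cd} and $\lambda_{\conjt}=\conjt(\lambda)$ should give $\mathrm{diag}(\Idfunc,-\alpha_C)$. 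This is a direct computation.

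For \eqref{eq:diagGF'Theta}, we use the natural isomorphism $\zeta\colon G\natiso\Theta F'$ of \eqref{eq:zetadef} and show it is a similitude $\widehat G\Rightarrow\widehat\Theta\widehat F'$ (resp.\ $\widecheck G\Rightarrow\widecheck\Theta\widecheck F'$) with a suitable multiplier $\mu$; the factor $\frac12$ in the splitting suggests $\mu=2$ or $\frac12$. Concretely, we need
\[
\conjt D(\zeta_{C'})\circ\widehat\theta_{F'(C')}\circ\Theta(\widehat\phi'_{C'})\circ\zeta_{D'(C')}=\mu\,\widehat\gamma_{C'}.
\]
Rather than computing $s_{F'(C')}$ abstractly, we choose the splitting of $F'(C')$ to be the explicit pair $\bigl(\begin{smallmatrix}\Idfunc\\-f^{-1}\end{smallmatrix}\bigr)$, $\frac12(\Idfunc,-f)$. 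This is legitimate because $\Theta$ is well defined only up to the choice of splitting, and changing it alters everything by a canonical isomorphism compatible with all structure. With this choice $\zeta$ becomes the identity. The left side then becomes
\[
-\conjt D\bigl(\begin{smallmatrix}\Idfunc\\-f^{-1}\end{smallmatrix}\bigr)\circ\alpha^{-1}_{C\oplus\conjt C}\circ D(\text{diag}(f,-\conjt f))\circ\widehat\phi'_{C'}\circ t_{D''F'(C')},
\]
which we expand into a single map $D(C)\to\conjt D(C)$ and compare with $\alpha_C^{-1}D(f)$. The two summands should each equal $\alpha_C^{-1}D(f)$, using the second commutation diagram and the relation $f\conjt(f)=\lambda_C$. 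Their sum then gives multiplier $\mu=2$, up to the sign bookkeeping.

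The main obstacle is this last similitude computation, specifically tracking signs across $\alpha'=-\alpha$, $\lambda'=-\lambda$, the sign in $\conjt'(C,f)$, and the minus in $\widehat\theta$. We expect to need the commutation axioms applied to $\conjt(C)$ and to $f$, together with the naturality of $\alpha$ with respect to $f$.
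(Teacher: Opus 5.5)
Your proposal is correct and follows the paper's proof essentially step by step. In both, $\xi$ and $\eta$ are the multiplier-$1$ isomorphisms of morphisms of categories with duality, $G'\Psi=F$ together with the second axiom in \eqref{conjdual.cd} gives \eqref{eq:diagFG'Psi} on the nose, and $\zeta$ is the similitude for \eqref{eq:diagGF'Theta}.

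Two small corrections. First, keeping the leading minus sign of $\widehat\theta$ gives multiplier $-2$ for $\zeta\colon\widehat G\Rightarrow\widehat\Theta\widehat F'$ and $2$ for $\zeta\colon\widecheck G\Rightarrow\widecheck\Theta\widecheck F'$. Second, that computation needs only the naturality of $\alpha$ with respect to $f$, not the second commutation axiom. You also do not need to fix a special splitting, because $t_{F'(C')}\circ\zeta_{C'}=\bigl(\begin{smallmatrix}\Idfunc_C\\-f^{-1}\end{smallmatrix}\bigr)$ holds for any choice.
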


\begin{proof}
  Recall from~\eqref{eq:xidef} the natural isomorphism $\xi\colon
  \Idfunc_{\catC}\natiso\Theta\Psi$; computation shows that
  \[
    (\Idnt_D\god\xi)\circ (\widecheck\theta\god\Idnt_\Psi) \circ
    (\Idnt_\Theta\god\widehat\psi) \circ (\xi\god\Idnt_D) = \Idnt_D
    \text{ and }
    (\Idnt_{\conjt D}\god\xi)\circ(\widehat\theta\god\Idnt_{\Psi})
    \circ (\Idnt_{\Theta}\god\widecheck\psi) \circ
    (\xi\god\Idnt_{\conjt D}) = \Idnt_{\conjt D}.
  \]
  Therefore, $\xi$ defines natural isomorphisms $(\Idfunc_{\catC},
  \Idnt_D) \natiso \widecheck\Theta\widehat\Psi$ and
  $(\Idfunc_{\catC}, \Idnt_{\conjt D})\natiso
  \widehat\Theta\widecheck\Psi$.

  Similarly, using the natural isomorphism
  $\eta\colon\Idfunc_{\catC''} \natiso \Psi\Theta$
  of~\eqref{eq:etadef}, one checks that
  \[
    (\Idnt_{\conjt''D''}\god\eta)\circ(\widehat\psi\god\Idnt_\Theta)
    \circ
    (\Idnt_\Psi\god\widecheck\theta)\circ(\eta\god\Idnt_{\conjt''D''})
    = \Idnt_{\conjt''D''}
    \text{ and }
    (\Idnt_{D''}\god\eta) \circ (\widecheck\psi\god\Idnt_\Theta) \circ
    (\Idnt_\Psi\god\widehat\theta)\circ(\eta\god\Idnt_{D''}) =
    \Idnt_{D''},
  \]
  hence $\eta$ defines natural isomorphisms $(\Idfunc_{\catC''},
  \Idnt_{\conjt''D''}) \natiso \widehat\Psi\widecheck\Theta$ and
  $(\Idfunc_{\catC''},\Idnt_{D''}) \natiso \widecheck\Psi
  \widehat\Theta$.

  Since $F=G'\Psi$, the commutativity of the
  diagrams~\eqref{eq:diagFG'Psi} is easily checked. Finally, to deal
  with the diagrams~\eqref{eq:diagGF'Theta}, we use the natural
  isomorphism $\zeta\colon G\natiso\Theta F'$ of~\eqref{eq:zetadef}
  and compute:
  \[
    (\Idnt_{\conjt D}\god\zeta)\circ(\widehat\theta\god\Idnt_{F'})
    \circ(\Idnt_\Theta\god\widehat\phi')\circ (\zeta\god\Idnt_{D'}) =
    -2\widehat\gamma
    \text{ and }
    (\Idnt_D\god\zeta)\circ(\widecheck\theta\god\Idnt_{F'})
    \circ(\Idnt_\Theta\god\widecheck\phi')\circ
    (\zeta\god\Idnt_{\conjt'D'})
    = 2\widecheck\gamma.
  \]
  Therefore, $\zeta\colon \widehat G\Rightarrow \widehat\Theta\widehat
  F'$ is a similitude with multiplier $-2$ and $\zeta\colon \widecheck
  G\Rightarrow \widecheck\Theta \widecheck F'$ is a similitude with
  multiplier $2$.
\end{proof}

Since
$\widehat\Psi\widecheck\Theta\cong
(\Idfunc_{\catC''},\Idnt_{\conjt''D''})$ 
and
$\widecheck\Psi\widehat\Theta\cong(\Idfunc_{\catC''},\Idnt_{D''})$,
the diagrams~\eqref{eq:diagFG'Psi} yield diagrams that commute up to
natural isomorphisms:
\[
  \xymatrix{
    (\catC'',\conjt''D'',\pm\delta_{\alpha''}'')
    \ar[dr]^{\widecheck G'}\ar[d]_{\widecheck\Theta}^{\wr}
    &
    \\
    (\catC,D,\pm\delta)\ar[r]_-{\widehat F}&
    (\catC',D',\pm\delta')
  }
  \quad\quad
  \xymatrix{
    (\catC'',D'',\mp\delta'')
    \ar[dr]^{\widehat G'}\ar[d]_{\widehat\Theta}^{\wr}
    &
    \\
    (\catC,\conjt D,\pm\delta_\alpha)\ar[r]_-{\widecheck F}
    &
    (\catC',\conjt'D',\pm\delta_{\alpha'}')
  }
\]
Pasting these diagrams with the diagrams~\eqref{eq:diagGF'Theta} for
the various doubles yields an infinite diagram that commutes up to
similitudes: 
\[
  \xymatrix{
    (\catC,D,\delta)\ar[r]^{\widehat F}
    &
    (\catC',D',\delta')\ar[r]^{\widehat F'}
    \ar[dr]_{\widehat G}
    &
    (\catC'',D'',\delta'')\ar[r]^{\widehat F''}
    \ar[d]_{\widehat\Theta}\ar[dr]^{\widehat G'}
    &
    (\catC''',D''',\delta''')\ar[r]^{\widehat F'''}
    \ar[d]_{\widehat\Theta'}\ar[dr]^{\widehat G''}
    &
    (\catC'''',D'''',\delta'''')\cdots
    \ar[d]_{\widehat\Theta''}
    \\
    &&
    (\catC,\conjt D,-\delta_\alpha)\ar[r]^{\widecheck F}
    &
    (\catC',\conjt'D',-\delta'_{\alpha'})\ar[r]^{\widecheck F'}
    \ar[dr]_{\widecheck G}
    &
    (\catC'',\conjt''D'',\delta''_{\alpha''})\cdots
    \ar[d]_{\widecheck\Theta}
    \\
    &&&&
    (\catC,D,-\delta)\cdots
  }
\]
Retaining only the upper sequence and the left-most broken diagonal,
we obtain a diagram that commutes up to similitudes:
\begin{equation}
  \label{eq:commdiagcatdual}
  \xymatrix{
    (\catC,D,\delta)\ar[r]^-{\widehat F}\ar@{=}[d]
    &
    (\catC',D',\delta')\ar[r]^-{\widehat F'}\ar@{=}[d]
    &
    (\catC'',D'',\delta'')\ar[d]_{\widehat \Theta}
    \ar[r]^-{\widehat F''}
    &
    (\catC''',D''',\delta''')\ar[d]_{\widehat \Theta'}
    \ar[r]^-{\widehat F'''}
    &
    (\catC'''',D'''',\delta'''')\cdots
    \ar[d]_{\widecheck\Theta\widehat \Theta''}
    \\
    (\catC,D,\delta)\ar[r]^-{\widehat F}
    &
    (\catC',D',\delta')\ar[r]^-{\widehat G}
    &
    (\catC,\conjt D,-\delta_\alpha)\ar[r]^-{\widecheck F}
    &
    (\catC',\conjt'D',-\delta'_{\alpha'})
    \ar[r]^-{\widecheck G}
    &
    (\catC,D,-\delta)\cdots
  }
\end{equation}
The lower sequence is $8$-periodic and folds into an octagon:
\begin{equation}
  \label{eq:octacatdual}
  \begin{split}
  \xymatrix{
    (\catC,D,\delta)\ar[r]^-{\widehat F}&
    (\catC',D',\delta')\ar[r]^-{\widehat G}&
    (\catC,\conjt D,-\delta_\alpha)\ar[d]^{\widecheck F}
    \\
    (\catC',\conjt'D',\delta'_{\alpha'})\ar[u]^{\widecheck G}
    &
    &
    (\catC',\conjt'D',-\delta'_{\alpha'})
    \ar[d]^{\widecheck G}
    \\
    (\catC,\conjt D,\delta_\alpha)\ar[u]^{\widecheck F}
    &
    (\catC',D',-\delta')\ar[l]_-{\widehat G}
    &
    (\catC,D,-\delta)\ar[l]_-{\widehat F}
  }
  \end{split}
\end{equation}

\subsection{Example: Projective modules}
\label{subsec:exdual}

The typical example of duality arises in the context of algebras with
involution or, more generally, with \emph{antistructure} in
C.T.C.~Wall's terminology~\cite{WallII}, \cite[\S1]{R}. Let $A$ be an
  associative $k$-algebra and $(\sigma,\varepsilon)$ a pair consisting
  of a $k$-linear antiautomorphism $\sigma\colon A\to A$ and a unit
  $\varepsilon\in A^\times$ such that
  $\sigma(\varepsilon)=\varepsilon^{-1}$ and $\sigma^2(a)=\varepsilon
  a \varepsilon^{-1}$ for all $a\in A$.
On the category~$\catP_A$ of finitely generated projective right
$A$-modules, a contravariant endofunctor $D$ is defined by
\[
  D(P)=\Hom_A(P,A) \qquad\text{for $P\in\catP_A$},
\]
using $\sigma$ to twist the natural left $A$-module structure on
$\Hom_A(P,A)$ into a right $A$-module structure. Thus, letting
$\langle\text{\textvisiblespace}, \text{\textvisiblespace}\rangle$
denote the 
canonical pairing between a module and its dual, we have
\[
  \langle\pi a,x\rangle = \sigma(a)\langle\pi,x\rangle \qquad\text{for
    $a\in A$, $\pi\in D(P)$ and $x\in P\in\catP_A$.}
\]
A natural isomorphism $\delta\colon\Idfunc_{\catP_A} \natiso DD$ is
defined by
\begin{equation*}
  \label{eq:delta}
  \langle\delta_P(x),\pi\rangle =
  \sigma(\langle\pi,x\rangle)\,\varepsilon \qquad\text{for $\pi\in
    D(P)$ and $x\in P\in\catP_A$.}
\end{equation*}
The pair $(D,\delta)$ is the duality on $\catP_A$ derived from the
antistructure $(\sigma,\varepsilon)$.
\medbreak

Now, recall the conjugation $(\conjt,\lambda)$ on $\catP_A$ from
Example~\ref{subsec:exconjug1}, derived from a structure
$(\conjt,\llambda)$ on $A$. Assume there exists a unit $u\in A^\times$
such that\footnote{The conditions on $u$ are necessary and sufficient
  to guarantee that \eqref{eq:sigma'} defines an antiautomorphism
  $\sigma'$ on $A'$ such that $\sigma'(j)j\in A^\times$ and
  ${\sigma'}^2(j)=\varepsilon j \varepsilon^{-1}$.}
  \begin{enumerate}
  \item[(i)]
    $\conjt\sigma(a)\,\conjt(u)=\conjt(u)\,\sigma\conjt(a)$ for all
    $a\in A$,
  \item[(ii)]
    $\conjt\sigma(\llambda)\llambda=\conjt(u)u$, and
  \item[(iii)]
    $\varepsilon\,\conjt(\varepsilon)^{-1} =
    \sigma\conjt(u)\,\conjt(u)^{-1}$. 
  \end{enumerate}
Then a commutation $\alpha\colon \conjt D\natiso D\conjt$ between
$(\conjt,\lambda)$ and $(D,\delta)$ is defined by
\begin{equation}
  \label{eq:defalphaP}
  \langle\alpha_P({}^\conjt\pi),{}^\conjt x\rangle = u\;
  \conjt^{-1}(\langle\pi,x\rangle)
  \qquad\text{for $\pi\in D(P)$ and $x\in P\in\catP_A$}.
\end{equation}

In order to describe the duality $(\conjt D,\delta_\alpha)$ defined in
Proposition~\ref{prop:altdual} in the same terms as the duality
$(D,\delta)$, we introduce the following notation:
\[
  \varepsilon_\alpha=
  \sigma(u\llambda^{-1})\varepsilon\quad\text{and}\quad 
  \{{}^\conjt\pi,x\}=\langle\pi,x\rangle \quad\text{for $\pi\in D(P)$
    and $x\in P\in\catP_A$.}
\]
Then $(\sigma\conjt,\varepsilon_\alpha)$ is an antistructure on $A$, and
$\{({}^\conjt\pi)a,x\}=\sigma\conjt(a)\,\{{}^\conjt\pi,x\}$ for
$a\in A$, ${}^\conjt\pi\in \conjt D(P)$ and $x\in P\in\catP_A$. Moreover,
\[
  \{(\delta_\alpha)_P(x),{}^\conjt\pi\} =
  \sigma\conjt(\{{}^\conjt\pi,
  x\})\,\varepsilon_\alpha
  \qquad\text{for ${}^\conjt\pi\in \conjt D(P)$ and $x\in
    P\in\catP_A$.}
\]
Therefore, the duality $(\conjt D,\delta_\alpha)$ is the duality on
$\catP_A$ derived from the
antistructure~$(\sigma\conjt,\varepsilon_\alpha)$ on $A$.
\medbreak

Next, we describe the duality $(D',\delta')$ on the double category
$\catP'_A$. Recall from Example~\ref{subsec:exconjug1} that $\catP'_A$
is identified with the category $\catP_{A'}$ of finitely generated
projective right modules over the algebra $A'=A\oplus Aj$, where
$j^2=\llambda$ and $ja=\conjt(a)j$ for $a\in A$: each pair
$P'=(P,f)\in\catP'_A$ is viewed as the $A'$-module obtained by
extending the $A$-module structure on $P$ by the rule $xj=f({}^\conjt
x)$ for $x\in P$. By definition, the $A'$-module structure on
$D'(P')=(D(P),-D(f)^{-1}\circ\alpha_P)$ is given by $\pi
j=-D(f)^{-1}\circ\alpha_P({}^\conjt\pi)$ for $\pi\in D(P)$, which
means that
\begin{equation}
  \label{eq:pij}
  \langle\pi j,x\rangle =
  -\langle\alpha_P({}^\conjt\pi),f^{-1}(x)\rangle =
  -u\;\conjt^{-1}(\langle\pi,xj^{-1}\rangle). 
\end{equation}
Extend the antiautomorphism $\sigma$ on $A$ to an antiautomorphism
$\sigma'$ on $A'$ such that ${\sigma'}^2(a')=\varepsilon
a'\varepsilon^{-1}$ for all $a'\in A'$ by
\begin{equation}
  \label{eq:sigma'}
  \sigma'(a_1+a_2j) = \sigma(a_1)-uj^{-1}\sigma(a_2)
  \qquad\text{for $a_1$, $a_2\in A$,}
\end{equation}
and let
\begin{equation}
  \label{eq:pairing'}
  \langle\pi,x\rangle' = \langle\pi,x\rangle+
  \langle\pi,xj^{-1}\rangle j\in A' \qquad\text{for $\pi\in D(P)$ and
    $x\in P$.}
\end{equation}
Then it follows from~\eqref{eq:pij} that $\langle\pi a',x\rangle' =
\sigma'(a')\langle\pi,x\rangle'$ for $a'\in A'$, $\pi\in D(P)$ and
$x\in P$, and
$\langle\text{\textvisiblespace},\text{\textvisiblespace}\rangle'$ is
an $A'$-sesquilinear pairing $D'(P')\times P'\to A'$, which identifies
$D'(P')$ with $\Hom_{A'}(P',A')$. 
With this notation, the definition $\delta'_{P'} = \delta_{G(P')}$ and
the conditions on $u$ imply
\[
  \langle\delta'_{P'}(x),\pi\rangle' = 
  \sigma'(\langle\pi, x\rangle')\,\varepsilon \qquad\text{for $\pi\in
    D'(P')$ and $x\in P'$.}
\]
Therefore, the duality $(D',\delta')$ on $\catP_{A'}$ is
derived from the antistructure $(\sigma',\varepsilon)$ on $A'$.

Defining $\{{}^{\conjt'}\pi,x\}' = \langle \pi, x\rangle'$ for $\pi\in
D'(P')$ and $x\in P'\in\catP_{A'}$, we also have
\[
  \{\delta'_{\alpha'}(x),{}^{\conjt'}\pi\}' =
  \sigma'\conjt'(\{{}^{\conjt'}\pi,x\}')\,\varepsilon_\alpha
  \quad\text{for ${}^{\conjt'}\pi\in\conjt'D'(P')$ and $x\in P'$,}
\]
because $\alpha'\colon\conjt'D'\natiso D'\conjt'$ and $\lambda'\colon
\conjt'\conjt'\natiso \Idfunc_{\catP_{A'}}$ are defined by
$\alpha'_{P'} = -\alpha_{G(P')}$ and $\lambda'_{P'} =
-\lambda_{G(P')}$. Thus, the duality $(\conjt'D',\delta'_{\alpha'})$
is derived from the antistructure $(\sigma'\conjt',\varepsilon)$ on
$A'$. 

\section{Symmetric spaces}
\label{sec:symsp}

As in the previous sections, $\catC$ denotes a pseudo-abelian
$k$-linear category for 
an arbitrary commutative ring $k$ in which $2$ is
invertible. Throughout the section, $(D,\delta)$ is a duality on
$\catC$.

\subsection{Definitions}
\label{subsec:defnsymsp}

A \emph{symmetric space} in the category with duality $(\catC,
D,\delta)$ is a pair $(C,h)$ consisting of an object $C\in\catC$ and
an isomorphism $h\colon C\to D(C)$ in $\catC$ such that
$D(h)\circ\delta_C=h$. For $\mu\in k^\times$, a \emph{similitude with
  multiplier $\mu$} from a symmetric space $(C_1,h_1)$ to a symmetric
space $(C_2,h_2)$ is an isomorphism $\varphi\colon
C_1\xrightarrow{\sim} C_2$ in $\catC$ such that the following diagram 
commutes:
\[
  \xymatrix{C_1\ar[rrr]^{\varphi} \ar[d]_{h_1} &&& C_2\ar[d]^{h_2}\\
    D(C_1)\ar[rr]^{\mu\Idfunc_{D(C_1)}}&&D(C_1)&
      D(C_2)\ar[l]_{D(\varphi)}
    }
  \]
Similitudes with multiplier~$1$ are \emph{isometries}.

For $L\in\catC$, let $H(L)$ denote the symmetric space
\[
  H(L)=\Bigl(L\oplus D(L),\;
  \bigl(\begin{smallmatrix}0&\Idfunc_{D(L)}\\ \delta_L&0
  \end{smallmatrix}\bigr)
  \Bigr).
\]
By definition, a symmetric space $(C,h)$ is \emph{hyperbolic} if
it is isometric to $H(L)$ for some $L\in\catC$. Note that for every
$\mu\in k^\times$ the morphism $\bigl(
\begin{smallmatrix}
  \mu\Idfunc_L&0\\
  0&\Idfunc_{D(L)}
\end{smallmatrix}
\bigr)$ is a similitude $H(L)\to H(L)$ with multiplier $\mu$. Therefore,
hyperbolicity is preserved under similitude.
      
\subsection{Hermitian categories}
\label{subsec:catsymsp}

Let $\catS(\catC, D, \delta)$ denote the category whose objects are
symmetric spaces in $(\catC,D,\delta)$ and morphisms are
isometries. We refer to $\catS(\catC,D,\delta)$ as the \emph{hermitian
  category} of the category with duality $(\catC,D,\delta)$.
Every morphism of categories with duality $\widetilde R=
(R,\theta)\colon (\catC_1,D_1,\delta_1)\to (\catC_2,D_2,\delta_2)$
induces a functor
\[
  \catS(\widetilde R)\colon \catS(\catC_1,D_1,\delta_1) \to
  \catS(\catC_2,D_2,\delta_2)
\]
mapping $(C,h)\in\catS(\catC_1,D_1,\delta_1)$ to
\[
  \catS(\widetilde R)(C,h) = \bigl(R(C), \theta_C\circ R(h)\bigr),
\]
see \cite[II(2.6)]{Knus} or \cite[Def.~11]{Balmer}. It readily follows
from the definitions that $\catS(\widetilde R)$ maps hyperbolic
symmetric spaces to hyperbolic symmetric spaces, because for every
$L\in\catC_1$ the morphism $\bigl(
\begin{smallmatrix}
  \Idfunc_{R(L)}&0\\ 0&\theta_L
\end{smallmatrix}
\bigr)\colon R(L)\oplus RD_1(L)\to R(L)\oplus D_2R(L)$ is an isometry
from $\catS(\widetilde R)\bigl(H(L)\bigr)$ to $HR(L)$. 

If $\rho\colon\widetilde R \natiso
\widetilde S$ is a similitude with multiplier $\mu\in k^\times$
between morphisms of categories with duality $\widetilde R$,
$\widetilde S\colon (\catC_1,D_1,\delta_1)\to (\catC_2,D_2,\delta_2)$,
then for every symmetric space $(C,h)\in\catS(\catC_1,D_1,h_1)$ the
isomorphism $\rho_C\colon R(C)\to S(C)$ in $\catC_2$ defines a
similitude with multiplier $\mu$
\[
  \catS(\rho_C)\colon \catS(\widetilde R)(C,h) \to \catS(\widetilde
  S)(C,h).
\]
Letting $\catS(\rho)_{(C,h)} = \catS(\rho_C)$, we also call
$\catS(\rho)\colon \catS(\widetilde R)\natiso \catS(\widetilde S)$ a
\emph{similitude} with multiplier $\mu$. In particular, 
equivalences of categories with duality yield equivalences of
the corresponding hermitian categories, which preserve
hyperbolicity. This applies to the morphisms 
from~\S\ref{subsec:periodual}, which thus define equivalences
\[
    \xymatrix{\catS(\catC,D,\pm\delta)
  \ar@<0.5ex>[r]^-{\catS(\widehat\Psi)}
  &
\ar@<0.5ex>[l]^-{\catS(\widecheck\Theta)}
\catS(\catC'',\conjt''D'',\pm\delta_{\alpha''}'')}
\quad\text{and}\quad
\xymatrix{\catS(\catC,\conjt D,\pm\delta_\alpha)
  \ar@<0.5ex>[r]^-{\catS(\widecheck\Psi)}
  &
\ar@<0.5ex>[l]^-{\catS(\widehat\Theta)}
\catS(\catC'',D'',\mp\delta'')}.
\]

Applying the $\catS$ construction to the
diagram~\eqref{eq:commdiagcatdual}, we obtain a diagram that commutes
up to similitudes:
\begin{equation}
  \label{eq:commdiagsymsp}
  \begin{split}
  \xymatrix{
    \catS(\catC,D,\delta)\ar[r]^-{\catS(\widehat F)}\ar@{=}[d]
    &
    \catS(\catC',D',\delta')\ar[r]^-{\catS(\widehat F')}\ar@{=}[d]
    &
    \catS(\catC'',D'',\delta'')\ar[d]_{\catS(\widehat \Theta)}
    \ar[r]^-{\catS(\widehat F'')}
    &
    \catS(\catC''',D''',\delta''')\ar[d]_{\catS(\widehat \Theta')}
    \ar[r]^-{\catS(\widehat F''')}
    &
    \cdots
    \\
    \catS(\catC,D,\delta)\ar[r]^-{\catS(\widehat F)}
    &
    \catS(\catC',D',\delta')\ar[r]^-{\catS(\widehat G)}
    &
    \catS(\catC,\conjt D,-\delta_\alpha)\ar[r]^-{\catS(\widecheck F)}
    &
    \catS(\catC',\conjt'D',-\delta'_{\alpha'})
    \ar[r]^-{\catS(\widecheck G)}
    &
    \cdots
  }
  \end{split}
\end{equation}
The $8$-periodic lower sequence folds into an oriented octagon of
hermitian categories:
\begin{equation}
  \label{eq:octasspaces}
  \begin{split}
  \xymatrix{
    \catS(\catC,D,\delta)\ar[r]^-{\catS(\widehat F)}&
    \catS(\catC',D',\delta')\ar[r]^-{\catS(\widehat G)}&
    \catS(\catC,\conjt D,-\delta_\alpha)\ar[d]^{\catS(\widecheck F)}
    \\
    \catS(\catC',\conjt'D',\delta'_{\alpha'})\ar[u]^{\catS(\widecheck G)}
    &
    &
    \catS(\catC',\conjt'D',-\delta'_{\alpha'})
    \ar[d]^{\catS(\widecheck G)}
    \\
    \catS(\catC,\conjt D,\delta_\alpha)\ar[u]^{\catS(\widecheck F)}
    &
    \catS(\catC',D',-\delta')\ar[l]_-{\catS(\widehat G)}
    &
    \catS(\catC,D,-\delta)\ar[l]_-{\catS(\widehat F)}
  }
  \end{split}
\end{equation}

The main result of this section is as follows:

\begin{theorem}
  \label{thm:octamain}
    Under the composition of two consecutive functors in the
  octagon~\eqref{eq:octasspaces}, the image of each category consists
  of hyperbolic symmetric spaces.
\end{theorem}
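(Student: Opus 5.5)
The plan is to treat the two kinds of consecutive composites in the octagon~\eqref{eq:octasspaces} separately: those of the form $\catS(G)\catS(F)$, which go from $\catC$ to $\catC$, and those of the form $\catS(F)\catS(G)$, which go from $\catC'$ to $\catC'$. The four arrows labelled $\widehat F,\widecheck F$ and $\widehat G,\widecheck G$ occur with both signs $\pm\delta$. Each sign case is obtained from the other by replacing $\delta$ with $-\delta$ (and hence $\delta_\alpha,\delta',\delta'_{\alpha'}$ with their negatives). So it suffices to handle $\widehat G\widehat F$, $\widecheck G\widecheck F$, $\widecheck F\widehat G$ and $\widehat F\widecheck G$ for an arbitrary commuting pair $(\conjt,\lambda)$, $(D,\delta)$, $\alpha$.

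Both cases will be reduced to one elementary fact. If $(X,k)$ is a symmetric space in any category with duality $(\catE,E,\epsilon)$ (pseudo-abelian, with $\frac12$), then $(X,k)\perp(X,-k)$ is hyperbolic. An explicit isometry onto $H(X)$ is
\[
x\oplus y\mapsto\bigl(\tfrac12(x+y),\,k(x-y)\bigr),
\]
i.e.\ the matrix $\bigl(\begin{smallmatrix}\frac12\Idfunc_X&\frac12\Idfunc_X\\ k&-k\end{smallmatrix}\bigr)$. Checking that it is an isometry is a one-line computation using $E(k)\epsilon_X=k$. Since hyperbolicity is preserved by isometries and similitudes (\S\ref{subsec:defnsymsp}), it then suffices to show that each composite sends $(X,k)$ to a space isometric to $(X,k)\perp(Y,-k_Y)$ with $(Y,k_Y)\simeq(X,k)$, or more generally to such an orthogonal sum up to similitude.

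For $\widehat G\widehat F$, take $(C,h)\in\catS(\catC,D,\delta)$. By the formulas for $\widehat\phi$ and $\widehat\gamma$, the image is the object $C\oplus\conjt(C)$ with form
\[
\widehat\gamma_{F(C)}\circ\widehat\phi_C\circ F(h)=\alpha_{C\oplus\conjt C}^{-1}\,D(\Lambda_C)\,\mathrm{diag}(\Idfunc,-\alpha_C)\,\mathrm{diag}\bigl(h,\conjt(h)\bigr),
\]
viewed in $(\catC,\conjt D,-\delta_\alpha)$. Since $\Lambda_C$ is antidiagonal, this form is antidiagonal with entries built from $h$ and $\conjt(h)$. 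In that case I will directly exhibit $C\subset C\oplus\conjt(C)$ as a Lagrangian with Lagrangian complement $\conjt(C)$. The off-diagonal entry is then an isomorphism $\conjt(C)\isom\conjt D(C)$, and the standard map $\mathrm{diag}(\Idfunc_C,\text{that entry})$ gives an isometry to $H(C)$ in $(\catC,\conjt D,-\delta_\alpha)$. The case $\widecheck G\widecheck F$ is the same computation with $\widecheck\phi,\widecheck\gamma$.

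For $\widecheck F\widehat G$, take $(C',h')$ with $C'=(C,f)$. I use the isomorphism $C'\oplus\conjt'(C')\isom FG(C')$ of Proposition~\ref{prop:FG}, namely $\frac12\bigl(\begin{smallmatrix}\Idfunc&-f\\ f^{-1}&\Idfunc\end{smallmatrix}\bigr)$. Pulling back the form of $\widecheck F\widehat G(C',h')$ along it should give a block-diagonal form. The first block is $h'$ up to a unit scalar, viewed in $(\catC',\conjt'D',-\delta'_{\alpha'})$ via the identification $D'\simeq\conjt'D'$ on $C'$ induced by $f$. The second block is the analogous form on $\conjt'(C')$, which $f$ identifies with $C'$ up to a sign $-1$. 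This yields $(X,k)\perp(X,-k)$ up to similitude, hence hyperbolicity by the fact above. The case $\widehat F\widecheck G$ is handled identically.

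The main obstacle is the sign bookkeeping. It is easy to end up with $(X,k)\perp(X,k)$ instead of $(X,k)\perp(X,-k)$, and the latter must appear, since the signs in $\conjt'$, $\lambda'$, $\alpha'$ and $-\delta_\alpha$ are exactly what make the octagon work. If the direct computation for the $FG$ composites becomes unwieldy, I would fall back on the diagrams~\eqref{eq:diagGF'Theta} and~\eqref{eq:commdiagcatdual}. These identify $\widecheck F\widehat G$, up to similitude and the equivalence $\widehat\Theta'$, with $\widehat F''\widehat F'$ on $\catC'$. The composite $\widehat F''\widehat F'$ is the doubling composite $\widehat F'\widehat F$ for the category $\catC'$. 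Since $\widehat G'\widehat F'$ is related to $\widehat F'$ via $\widehat\Psi,\widecheck\Theta$, this would reduce everything to the $GF$ case already treated.
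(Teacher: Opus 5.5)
Your plan is correct in outline and takes a genuinely different route from the paper. One step, however, is stated incorrectly: your primary treatment of the $FG$ composites.

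\textbf{Comparison with the paper.} The paper never computes the composites in the octagon directly. Via~\eqref{eq:commdiagsymsp} and the index shift replacing $\widehat F^{(n)}$ by $\widehat F$, it reduces all eight of them to $\catS(\widehat F')\catS(\widehat F)$. It then proves Proposition~\ref{prop:1} by using Lemma~\ref{lem:extend} to build $C''=(C',\theta')\in\catC''$ and writing down an explicit isometry $\catS(\widehat F')(C',h')\to H(C'')$.

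Your $GF$ computation is correct and more elementary. Since $\widehat\gamma_{F(C)}$ and $\widecheck\gamma_{F(C)}$ contain $D(\Lambda_C^{\pm1})$, the resulting forms are antidiagonal. For $\widehat G\widehat F$ one finds $\bigl(\begin{smallmatrix}0&-\conjt(h)\\ \alpha_{\conjt(C)}^{-1}D(\lambda_C)\,h&0\end{smallmatrix}\bigr)$, so $\mathrm{diag}(\Idfunc_C,-\conjt(h))$ is an isometry onto $H(C)$. This avoids $\catC''$ and Lemma~\ref{lem:extend} altogether. The paper's route, in exchange, needs only a single computation and reuses Lemma~\ref{lem:extend}, which it needs anyway for the exactness theorems.

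\textbf{The flawed step.} For $\widecheck F\widehat G$ you rely on an identification $D'(C')\simeq\conjt'D'(C')$ ``induced by $f$'', and this does not exist. $f$ would be a morphism $\conjt'(C')\to C'$ in $\catC'$ only if $-f\conjt(f)=f\conjt(f)$, i.e.\ $\lambda_C=0$, which is impossible unless $C=0$. In general $C'$ and $\conjt'(C')$ need not be isomorphic in $\catC'$ at all.

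Carry out the pullback anyway. Set $k_1=\alpha_C^{-1}D(f)h'$ and $k_2=\conjt D(f)\conjt(h')$, and pull $\mathrm{diag}(k_1,k_2)$ back along $\frac12\bigl(\begin{smallmatrix}\Idfunc_C&-f\\ f^{-1}&\Idfunc_{\conjt(C)}\end{smallmatrix}\bigr)$. The morphism condition $h'f=-D(f)^{-1}\alpha_C\conjt(h')$, i.e.\ $k_1f=-\conjt(h')$, makes \emph{both} diagonal blocks vanish, while the $(1,2)$ block is $\frac12\conjt(h')$. So you never get $(X,k)\perp(X,-k)$. Instead $C'$ and $\conjt'(C')$ are complementary Lagrangians, and the image is isometric to $H(C')$ by exactly the argument of your $GF$ case. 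The same happens for $\widehat F\widecheck G$.

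Your fallback is also sound once you use the right relation. Combine $\widehat G'\cong\widecheck F\widehat\Theta$ (from the diagram following Theorem~\ref{thm:PsiTheta}) with the similitude $\widehat G\simeq\widehat\Theta\widehat F'$. This gives $\widecheck F\widehat G\simeq\widehat G'\widehat F'$ up to similitude, and similarly $\widehat F\widecheck G\simeq\widecheck G'\widecheck F'$. These are your $GF$ cases for the doubled data $(\catC',\conjt',\lambda',D',\delta',\alpha')$, so there is no need to pass through $\widehat F''\widehat F'$.
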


Since hyperbolicity is preserved by similitude
and by $\catS(\widehat\Theta)$ and $\catS(\widecheck\Theta)$, we may
substitute in the theorem the functors
in the upper row of~\eqref{eq:commdiagsymsp} for the 
functors $\catS(\widehat F)$, $\catS(\widehat G)$, $\catS(\widecheck
F)$, $\catS(\widecheck G)$ from~\eqref{eq:octasspaces}. Moreover,
since 
$\widehat F^{(n+1)}=\bigl(\widehat F^{(n)}\bigr)'$, we may substitute
$\widehat F$ for $\widehat F^{(n)}$ and consider only the composition
of $\catS(\widehat F)$ and $\catS(\widehat F')$. Therefore, the
theorem follows from the following proposition:

\begin{proposition}
  \label{prop:1}
  The composition $\catS(\widehat F')\catS(\widehat F) =
  \catS(\widehat F'\widehat F)$ maps every symmetric space in
  $\catS(\catC,D,\delta)$ to a hyperbolic symmetric space in
  $\catS(\catC'',D'',\delta'')$. 
\end{proposition}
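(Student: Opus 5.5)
The plan is to exhibit an explicit Lagrangian, that is, to produce an isometry between $\catS(\widehat F'\widehat F)(C,h)$ and a hyperbolic space $H(L)$ in $(\catC'',D'',\delta'')$. The natural candidate is $L=\Psi(C)$, suggested by two facts proved earlier. First, $G'\Psi=F$ (diagram~\eqref{eq:diagKM}), so $F'F=F'G'\Psi$. Second, Proposition~\ref{prop:FG}, applied to the double $\catC'$, gives $F'G'\cong\Idfunc_{\catC''}\oplus\conjt''$. Together these yield
\[
F'F(C)\cong\Psi(C)\oplus\conjt''\Psi(C)\cong\Psi(C)\oplus\Psi\conjt(C),
\]
the last step using $\psi$. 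On the other side, $\widecheck\psi$ gives $D''\Psi(C)\cong\Psi\conjt D(C)$. The isomorphism $h\colon C\to D(C)$ gives $\Psi\conjt(h)\colon\Psi\conjt(C)\isom\Psi\conjt D(C)$. So the underlying objects of $H(\Psi(C))=\Psi(C)\oplus D''\Psi(C)$ and of $F'F(C)$ are both, in $\catC$, of the form $C\oplus\conjt C\oplus\conjt C\oplus\conjt\conjt C$. This gives a concrete matrix to aim for.

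The steps, in order, are as follows.

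First, write out $\catS(\widehat F'\widehat F)(C,h)$ explicitly. Its underlying object is
\[
F'F(C)=\bigl(C\oplus\conjt C\oplus\conjt C\oplus\conjt\conjt C,\ \Lambda\text{-type structure},\ \Delta\text{-type structure}\bigr),
\]
and its form is $\widehat\phi'_{F(C)}\circ F'(\widehat\phi_C)\circ F'F(h)$. This is a $4\times4$ block matrix whose entries are $h$, $\conjt(h)$, $\conjt\conjt(h)$ twisted by $\alpha$, $\conjt(\alpha)$ and signs.

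Second, write the candidate isometry $\Phi\colon H(\Psi(C))\to F'F(C)$ as a $4\times4$ block matrix. I will model it on the isomorphism $\frac12\bigl(\begin{smallmatrix}\Idfunc&-f\\ f^{-1}&\Idfunc\end{smallmatrix}\bigr)$ of Proposition~\ref{prop:FG}, with $f$ replaced by the structure maps of $F(C)$. On the $D''\Psi(C)$ component I will precompose with $\widecheck\psi_C^{-1}$ and $\Psi\conjt(h)^{-1}$, so that this component lands in the $\conjt C\oplus\conjt\conjt C$-type summands. Equivalently, I will look for a summand $L\subset F'F(C)$ in $\catC''$, isomorphic to $\Psi(C)$, on which the form vanishes, together with a complement $L'$ such that the restricted form gives $L'\cong D''(L)$. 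The invertibility of $2$ is what makes the idempotents $\frac12(1\pm\cdots)$ available to split these summands.

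Third, verify three things about $\Phi$: that it is a morphism in $\catC''$ (it commutes with both structure maps), that it is invertible, and that it carries the hyperbolic form $\bigl(\begin{smallmatrix}0&\Idfunc\\ \delta''&0\end{smallmatrix}\bigr)$ to the form computed in the first step. For the last point I will use the symmetry $D(h)\delta_C=h$ and the commutation axioms~\eqref{conjdual.cd} to simplify the entries involving $\alpha$ and $\lambda$. If only a similitude can be arranged, that is enough, since hyperbolicity is preserved under similitude.

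The main obstacle is the third step, namely pinning down the correct signs and the placement of $\alpha$ and $\lambda$ so that $\Phi$ is genuinely an isometry. What makes this delicate is that $\conjt'$, $\lambda'$ and $\alpha'$ all carry extra minus signs. My fallback is to first find the isotropic summand: check that the image of $\Psi(C)$ under the first component of the splitting $\Idfunc\oplus\conjt''\cong F'G'$ is totally isotropic, since the form restricted there should vanish by antisymmetry coming from $\Delta$. I would then deduce hyperbolicity by the standard argument that, when $2$ is invertible, a symmetric space admitting a totally isotropic direct summand $L$ with $L\cong D''(\text{complement})$ is hyperbolic.
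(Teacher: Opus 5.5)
Your proposal is correct and is essentially the paper's proof. Writing $X$ for your $C$, the paper's Lagrangian is $C''=(F(X),\theta')$ with $\theta'=-\Delta_X$ supplied by Lemma~\ref{lem:extend}; this object is isomorphic to $\conjt''\Psi(X)$, the complementary summand of your splitting $F'F(X)=F'G'\Psi(X)\cong\Psi(X)\oplus\conjt''\Psi(X)$. Its explicit isometry $F'F(X)\to C''\oplus D''(C'')$ is the inverse of the isomorphism of Proposition~\ref{prop:FG} followed by $\theta'\colon\conjt''\Psi(X)\isom C''$ and $\frac12h'\colon\Psi(X)\isom D''(C'')$, so your choice $L=\Psi(X)$ and the paper's choice are interchangeable. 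The one point to sharpen is why your summand is isotropic: it is not an intrinsic antisymmetry of $\Delta_X$ but the fact that $\pm\Delta_X$ is an isometry $\bigl(\conjt'F(X),\alpha'_{F(X)}\circ\conjt'(h')\bigr)\to\bigl(F(X),h'\bigr)$, which is direction (a)$\Rightarrow$(b) of Lemma~\ref{lem:extend} and rests on the second square of~\eqref{conjdual.cd} together with naturality of $\lambda$; granted this, both summands are isotropic and their mutual pairing is a unit multiple of $D'(\Delta_X)\circ h'$, so no modification of the complement is needed.
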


The following lemma characterises the symmetric spaces in the image of
$\catS(\widehat F)$. It is to be used in the proofs of
Proposition~\ref{prop:1} and Theorems~\ref{thm:octasplit} and
\ref{thm:abelcat}. 

\begin{lemma}
  \label{lem:extend}
  For a symmetric space $(C',h')\in\catS(\catC',D',\delta')$, the
  following conditions are equivalent: 
  \begin{enumerate}
  \item[(a)]
    $(C',h')\simeq \catS(\widehat F)(X,h)$ for some symmetric space
    $(X,h)\in \catS(\catC,D,\delta)$;
  \item[(b)]
    there exists an isometry $\theta'\colon
    \bigl(\conjt'(C'),\alpha'_{C'}\circ \conjt'(h')\bigr) \to (C',h')$
    in $\catS(\catC',D',\delta')$ such that
    $\theta'\circ\conjt'(\theta') = \lambda'_{C'}$.
  \end{enumerate}
\end{lemma}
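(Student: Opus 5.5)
The plan is to identify condition (b) with the statement that the pair $(C',\theta')$ is an object of the second double $\catC''$ which in addition carries the form $h'$ compatibly. Then we use the equivalence $\Theta,\Psi$ of \S\ref{subsec:periodconj} together with the identity $G'\Psi=F$. Throughout, $\theta'$ is a morphism $\conjt'(C')\to C'$ in $\catC'$ with $\theta'\circ\conjt'(\theta')=\lambda'_{C'}$. This is exactly the defining condition for $(C',\theta')$ to be an object of $\catC''$.

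For (a)$\Rightarrow$(b), first compute $\catS(\widehat F)(X,h)$. It is the pair $\bigl(F(X),\widehat\phi_X\circ F(h)\bigr)$ with
\[
\widehat\phi_X\circ F(h)=\bigl(\begin{smallmatrix}h&0\\0&-\alpha_X\circ\conjt(h)\end{smallmatrix}\bigr).
\]
The natural candidate is $\theta'=\Delta_X$. We already know from \S\ref{subsec:periodconj} that $\Delta_X$ is a morphism $\conjt'F(X)\to F(X)$ in $\catC'$ and that $\Delta_X\circ\conjt'(\Delta_X)=\lambda'_{F(X)}$, since $\Psi(X)\in\catC''$. It remains to check that $\Delta_X$ is an isometry from $\bigl(\conjt'F(X),\alpha'_{F(X)}\circ\conjt'(h'_X)\bigr)$ to $\bigl(F(X),h'_X\bigr)$, where $h'_X$ is the form above. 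This is a $2\times2$ matrix computation. It uses $\alpha'=-\alpha$, the symmetry $D(h)\delta_X=h$ and the second diagram in~\eqref{conjdual.cd}, which relates $\alpha$ and $\lambda$. Finally, if $\psi\colon(C',h')\isom\catS(\widehat F)(X,h)$ is an isometry, we transport the structure by setting $\theta'=\psi^{-1}\circ\Delta_X\circ\conjt'(\psi)$. The isometry and involutivity conditions are clearly preserved under this transport.

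For (b)$\Rightarrow$(a), set $C''=(C',\theta')\in\catC''$ and $X=\Theta(C'')\in\catC$. The isomorphism $\eta_{C''}\colon C''\isom\Psi(X)$ of~\eqref{eq:etadef} is in particular an isomorphism in $\catC'$ from $C'$ to $G'\Psi(X)=F(X)$, and it intertwines $\theta'$ with $\Delta_X$. Transporting $h'$ along $\eta_{C''}$ yields a symmetric space $(F(X),k)$ in $\catS(\catC',D',\delta')$ for which $\Delta_X$ is an isometry from its $\conjt'$-conjugate. So it suffices to show the following claim: every such $k$ equals $\widehat\phi_X\circ F(h)$ for a symmetric space $(X,h)$.

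To prove the claim, write $k$ (composed with $\widehat\phi_X^{-1}$) as a $2\times2$ matrix of morphisms between $X\oplus\conjt(X)$ and $D(X)\oplus\conjt D(X)$. The condition of being a morphism in $\catC'$, i.e.\ commuting with $\Lambda$, forces the matrix to have the shape
\[
\bigl(\begin{smallmatrix}a&b\\ \pm\conjt(b)\circ(\ldots)&\conjt(a)\end{smallmatrix}\bigr),
\]
so it is determined by $a\colon X\to D(X)$ and one off-diagonal entry $b$. The isometry condition for $\Delta_X$ differs from the $\Lambda$-condition by the sign of $\lambda$, which should force $b=-b$ and hence $b=0$, using $\frac12\in k$. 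The symmetry $D'(k)\delta'=k$ then gives $D(a)\delta_X=a$. Since $k$ is an isomorphism and diagonal, $a$ is an isomorphism. So $h=a$ works, and $(C',h')\simeq\catS(\widehat F)(X,h)$.

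I expect the main obstacle to be this last sign bookkeeping: checking that the two conditions (morphism in $\catC'$, and isometry along $\Delta_X$) really cancel the off-diagonal entry, rather than merely relating it to itself. It involves keeping track of $\alpha$, $\lambda$, $\conjt(\alpha)$ and the signs in $D'$ and $\conjt'$ simultaneously. If the direct matrix approach gets tangled, I would instead use the idempotent $\frac12(\Idfunc-\theta'\circ f^{-1})$ from \S\ref{subsec:periodconj}. The plan would be to show that $h'$ restricts, via $s_{C''}$ and $t_{C''}$, to a form on $X$ and is orthogonal between the $\pm1$ eigenparts. By~\eqref{efgs1.eq}, this orthogonality should follow from the isometry condition on $\theta'$.
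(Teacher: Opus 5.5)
Your proposal is correct. For (a)$\Rightarrow$(b) it matches the paper's argument. The paper takes $\theta'=\bigl(\begin{smallmatrix}0&\lambda_X\\ -\Idfunc_{\conjt(X)}&0\end{smallmatrix}\bigr)=-\Delta_X$ rather than your $\Delta_X$. The sign is immaterial, since the isometry condition and $\theta'\circ\conjt'(\theta')=\lambda'_{F(X)}$ are both quadratic in $\theta'$. The check also needs only naturality of $\lambda$ and the second diagram in \eqref{conjdual.cd}, not the symmetry of $h$.

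For (b)$\Rightarrow$(a) you share the paper's setup ($C''=(C',\theta')$, $X=\Theta(C'')$, $\eta_{C''}$, $G'\Psi=F$) but finish differently. The paper stays on $C$. It defines $h=D(t)\circ G(h')\circ t$ outright and observes that $G(\theta')\circ f^{-1}$ is an isometric involution of $(C,G(h'))$. This makes the two summands cut out by $t\circ s$ orthogonal, which is essentially your fallback plan. It then verifies \eqref{eq:extend2} from the explicit formula for $\eta_{C''}$.

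You instead transport $h'$ along $G'(\eta_{C''})$ to a form $k$ on $F(X)$ for which $\Delta_X$ is an isometry, and prove a normal-form statement there. The step you hedge on does go through. Write $\widehat\phi_X^{-1}\circ k=\bigl(\begin{smallmatrix}a&b\\ \conjt(b)\circ\lambda_X^{-1}&\conjt(a)\end{smallmatrix}\bigr)$. The $(1,2)$ entry of $D'(\Delta_X)\circ k\circ\Delta_X=\alpha'_{F(X)}\circ\conjt'(k)$ then reads $\alpha_X\circ\conjt(b)=-\alpha_X\circ\conjt(b)$, so $b=0$. The $(2,1)$ entry gives the same conclusion. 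The diagonal entries hold identically, using $\alpha_{\conjt(X)}\circ\conjt(\alpha_X)=D(\lambda_X)\circ\lambda_{D(X)}$ and naturality of $\lambda$.

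The two routes buy different things. Yours uses only that $C''\simeq\Psi(X)$ in $\catC''$, not the explicit formula for $\eta_{C''}$. It also shows transparently that the isometry condition on $\theta'$ is exactly what kills the off-diagonal block. The paper's route avoids the matrix bookkeeping and gives $h$ intrinsically as $D(t)\circ G(h')\circ t$. This is in fact your $a$, since $G'(\eta_{C''})^{-1}=\bigl(t,\,f\circ\conjt(t)\bigr)$.
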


\begin{proof}
  (a)~$\Rightarrow$~(b) For every symmetric space
  $(X,h)\in\catS(\catC,D,\delta)$, the map $\theta':=\bigl(
  \begin{smallmatrix}
    0&\lambda_X\\ -\Idfunc_{\conjt(X)}&0
  \end{smallmatrix}
  \bigr)$ defines an isometry $\bigl(\conjt'F(X),\alpha'_{F(X)}\circ
    \conjt'(\widehat\phi_X\circ F(h))\bigr) \to
    \bigl(F(X),\widehat\phi_X\circ F(h)\bigr)$ such that
    $\theta'\circ\conjt'(\theta')=\lambda'_{F(X)}$. 
  \medbreak

  \noindent (b)~$\Rightarrow$~(a) Let $C'=(C,f)$. For $\theta'$
  in~(b), the pair $C'':=(C',\theta')$ is an object in $\catC''$. The
  functor $\Theta\colon \catC''\to\catC$ from
  \S\ref{subsec:periodconj} yields an object $\Theta(C'')\in\catC$,
  which we denote simply by $X$, and morphisms $X\xrightarrow{t}
  C\xrightarrow{s} X$ in $\catC$ such that
  \[
    s\circ t= \Idfunc_X \qquad\text{and}\qquad
    t\circ s = \textstyle{\frac12}(\Idfunc_C-G(\theta')\circ f^{-1}).
  \]
  Note that $\theta'$ is an isomorphism in $\catC'$, hence
  $f\circ\conjt G(\theta')= - G(\theta')\circ\conjt(f)$, and therefore
  \begin{equation}
    \label{eq:extend1}
    f^{-1}\circ G(\theta') = G(\theta')\circ f^{-1}
    \qquad\text{and}\qquad
    f\circ\conjt(t\circ s)\circ f^{-1} =
    \textstyle{\frac12}(\Idfunc_C+G(\theta')\circ f^{-1}) =
    \Idfunc_C-t\circ s.
  \end{equation}
  
  Define $h\colon X\to D(X)$ by $h=D(t)\circ G(h')\circ t$. Since
  $h'=D'(h')\circ \delta'_{C'}$ and $G(\delta'_{C'}) = \delta_C$, it
  follows that $h=D(h)\circ \delta_X$, but we need to show that $h$ is
  an isomorphism to be able to consider
  $(X,h)\in\catS(\catC,D,\delta)$.

  Recall from~\eqref{eq:etadef} the natural isomorphism $\eta\colon
  \Idfunc_{\catC''}\natiso \Psi\Theta$, which yields an isomorphism
  $\eta_{C''}= \bigl( 
  \begin{smallmatrix}
    s\\ \conjt(s)f^{-1}
  \end{smallmatrix}
  \bigr)\colon C''\to \Psi(X)$, hence also an isomorphism
  $
    G'(\eta_{C''})\colon C'\to G'\Psi(X) = F(X).
    $
    We claim that
    \begin{equation}
      \label{eq:extend2}
      h'=D'G'(\eta_{C''})\circ\widehat\phi_X\circ F(h)\circ
    G'(\eta_{C''}).
  \end{equation}
  This equation shows at once that $h$ is an isomorphism and that
  $G'(\eta_{C''})$ is an isometry $(C',h')\to \catS(\widehat F)(X,h)$,
  completing the proof of the lemma.

  To prove~\eqref{eq:extend2}, observe that the isometry $\theta'$
  induces an isometry $G(\theta')\colon (\conjt(C),-\alpha_C\circ
  \conjt G(h'))\to (C,G(h'))$ in $\catS(\catC,D,\delta)$, and that the
  condition that $h'$ is an isometry in $\catS(\catC',D',\delta')$
  implies that $f^{-1}$ is an isometry $(C,G(h')) \to
  (\conjt(C),-\alpha_C\circ \conjt G(h'))$ in
  $\catS(\catC,D,\delta)$. Therefore, $G(\theta')\circ f^{-1}$ is an
  isometry $(C,G(h'))\to (C,G(h'))$, and it follows that
  \begin{equation}
    \label{eq:extend3}
    D(t\circ s)\circ G(h')\circ t\circ s = G(h')\circ t\circ s.
  \end{equation}
  From~\eqref{eq:extend1} and \eqref{eq:extend3}, we obtain
  \[
    \conjt(D(t\circ s)\circ G(h')\circ t\circ s) \circ f^{-1} = \conjt
    G(h')\circ f^{-1} \circ (\Idfunc_C-t\circ s).
  \]
  Therefore, since $f^{-1}\colon (C,G(h'))\to
  (\conjt(C),-\alpha_C\circ \conjt G(h'))$ is an isometry,
  \begin{equation}
    \label{eq:extend4}
    D(f^{-1})\circ\alpha_C\circ \conjt(D(t\circ s)\circ G(h')\circ
    t\circ s) \circ f^{-1}= -G(h')\circ(\Idfunc_C-t\circ s).
  \end{equation}
  Now, by definition of $\eta_{C''}$ we have
  \[
    D'G'(\eta_{C''})\circ\widehat\phi_X\circ F(h)\circ
    G'(\eta_{C''}) = D(t\circ s)\circ G(h')\circ t\circ s -
    D(f^{-1})\circ\alpha_C\circ \conjt(D(t\circ s)\circ G(h')\circ 
    t\circ s) \circ f^{-1}.
  \]
  Therefore, \eqref{eq:extend2} follows from~\eqref{eq:extend3} and
  \eqref{eq:extend4}. 
\end{proof}

\begin{proof}[Proof of Proposition~\ref{prop:1}]
  Let $(C',h')\in\catS(\catC',D',\delta')$ be such that
  $(C',h')=\catS(\widehat F)(X,h)$ for some
  $(X,h)\in\catS(\catC,D,\delta)$. Lemma~\ref{lem:extend} yields an
  isometry $\theta'\colon\bigl(\conjt'(C'),\alpha'_{C'}\circ
  \conjt'(h')\bigr) \to (C',h')$ such that
  $\theta'\circ\conjt'(\theta') = \lambda'_{C'}$, hence an object
  $C''=(C',\theta')\in \catC''$. In $\catC''$, the morphism
  \[
    \begin{pmatrix}
      \Idfunc_{C'}& \theta'\\
      \textstyle{\frac12}h'&-\textstyle{\frac12}h'\circ\theta'
    \end{pmatrix}
    \colon F'(C')\to C''\oplus D''(C'')
  \]
  is an isometry $\catS(\widehat F')(C',h')\to H(C'')$.
\end{proof}

\subsection{Example: Hermitian spaces}
\label{subsec:exsymsp1}

For the typical dualities on categories of projective modules
discussed in 
\S\ref{subsec:exdual}, the symmetric spaces are (slightly generalised)
hermitian spaces. Let $(\sigma, \varepsilon)$ be an antistructure on a
$k$-algebra $A$ and let $(D,\delta)$ be the derived duality on
$\catP_A$. To every $(P,h)\in\catS(\catP_A,D,\delta)$ we attach a map
$P\times P\to A$, for which we 
use the same notation $h$, defined by
\[
  h(x,y)=\langle h(x),y\rangle
  \quad\text{for $x$, $y\in P$.}
\]
It readily follows from the definitions that $h$ is a
nonsingular sesquilinear map for the antiautomorphism $\sigma$ on $A$,
and the property $h=D(h)\circ\delta_P$ means that
\[
  h(y,x) = \sigma\bigl(h(x,y)\bigr)\,\varepsilon
  \qquad\text{for $x$, $y\in P$.}
\]
In C.T.C.~Wall's terminology \cite{WallI} the map $h\colon P\times
P\to A$ is a nonsingular \emph{$(\sigma,\varepsilon)$-reflexive
  form}. When $\varepsilon$ lies in the center of $A$, then $\sigma$
is an involution and $\catS(\catP_A,D,\delta)$ is the category of
$\varepsilon$-hermitian $A$-modules as defined in~\cite[p.~12]{Knus}
(and denoted there by $\mathfrak{H}^\varepsilon(A)$). Abusing
terminology, we also call \emph{$(\sigma,\varepsilon)$-hermitian
  modules} the symmetric spaces in $\catS(\catP_A,D,\delta)$  when
$(D,\delta)$ is derived from the antistructure $(\sigma,\varepsilon)$,
and call \emph{$(\sigma,\varepsilon)$-hermitian forms} C.T.C.~Wall's
$(\sigma,\varepsilon)$-reflexive forms.
We write
\[
  \catS(\catP_A,D,\delta)=\catH^{\varepsilon}(A,\sigma)
  \qquad\text{and}\qquad
  \catS(\catP_A,D,-\delta)=\catH^{-\varepsilon}(A,\sigma).
\]

Similarly, given a commutation $\alpha$ between $(\conjt,\lambda)$
and $(D,\delta)$ as in~\S\ref{subsec:exdual}, we attach to every
symmetric space $(P,h)\in\catS(\catP_A,\conjt D, \delta_\alpha)$ a map
\[
  h\colon P\times P\to A
  \quad\text{defined by}\quad
  h(x,y)=\{h(x),y\}
  \quad\text{for $x$, $y\in P$.}
\]
This map is a nonsingular $(\sigma\conjt,\varepsilon_\alpha)$-hermitian
form, hence
\[
  \catS(\catP_A,\conjt D,\delta_\alpha) =
  \catH^{\varepsilon_\alpha}(A,\sigma\conjt)
  \qquad\text{and}\qquad
  \catS(\catP_A,\conjt D,-\delta_\alpha) =
  \catH^{-\varepsilon_\alpha}(A,\sigma\conjt).
\]
Likewise, for the dualities $(D',\pm\delta')$ and
$(\conjt'D',\pm\delta'_{\alpha'})$ derived from
$(\sigma',\pm\varepsilon')$ 
and $(\sigma'\conjt',\pm\varepsilon_\alpha)$,
\[
  \catS(\catP_{A'},D',\pm\delta')=\catH^{\pm\varepsilon}(A',\sigma')
  \qquad\text{and}\qquad
  \catS(\catP_{A'},\conjt'D',\pm\delta'_{\alpha'}) =
  \catH^{\pm\varepsilon_\alpha}(A',\sigma'\conjt').
\]

The functors
\[
  \catS(\widehat F)\colon \catH^{\pm\varepsilon}(A,\sigma) \to
  \catH^{\pm\varepsilon}(A',\sigma')
  \quad\text{and}\quad \catS(\widecheck F)\colon
\catH^{\pm\varepsilon_\alpha}(A,\sigma\conjt) \to
\catH^{\pm\varepsilon_\alpha}(A',\sigma'\conjt')
\]
are given by scalar extensions, mapping
every $(\sigma,\pm\varepsilon)$- or $(\sigma\conjt,\pm
\varepsilon_\alpha)$-hermitian space $(P,h)$ to
$(P\otimes_AA',h')$ defined by
\begin{multline*}
  h'(x_1\otimes1+x_2\otimes j,\,y_1\otimes1+y_2\otimes j) =
  h(x_1,y_1) + \sigma'(j)h(x_2,y_2)j +
  h(x_1,y_2)j + \sigma'(j) h(x_2,y_1)
  \\
  = h(x_1,y_1) - u\,\conjt^{-1}(h(x_2,y_2))
  +\bigl(h(x_1,y_2) - u\,\conjt^{-1}(
  h(x_2,y_1))z^{-1}\bigr) j
\end{multline*}
for $x_1$, $x_2$, $y_1$, $y_2\in P$.

On the other hand, the functors
\[
  \catS(\widehat G)\colon
\catH^{\pm\varepsilon}(A',\sigma')\to
\catH^{\mp\varepsilon_\alpha}(A,\sigma\conjt)
\quad\text{and}\quad \catS(\widecheck
G)\colon \catH^{\pm\varepsilon_\alpha}(A',\sigma'\conjt') \to
\catH^{\pm\varepsilon}(A,\sigma)
\]
are given by transfer maps. To make this
clear, consider the map
\[
  \tr\colon A'\to A
  \quad\text{defined by} \quad
  \tr(a_1+a_2j) = a_1
  \quad\text{for $a_1$, $a_2\in A$.}
\]
It satisfies $\tr(ja)=\conjt\bigl(\tr(aj)\bigr)$,
$\tr\bigl(\conjt'(a)\bigr)=\conjt\bigl(\tr(a)\bigr)$ and
$\tr\bigl(\sigma'(a)\bigr)=\sigma\bigl(\tr(a)\bigr)$ for 
all $a\in A'$. Moreover, for $P'\in\catP_{A'}$, the identifications
$GD'(P')=DG'(P')$ and $D'(P')=\Hom_{A'}(P',A')$
from~\eqref{eq:pairing'} show that
\[
  \langle \pi, x\rangle = \tr(\langle\pi,x\rangle')
  \qquad\text{for $\pi\in D'(P')$ and $x\in P'$.}
\]
Let $(P',h')\in\catS(\catP_{A'},D',\pm\delta')$ and
$h=\widehat\gamma_{P'}\circ G(h')$, so $(G(P'),h)$ is the image of
$(P',h')$ under $\catS(\widehat G)$. Let also sesquilinear maps $h'$ and
$h$ be defined by
\[
  h'(x,y)=\langle h'(x), y\rangle' \quad\text{and}\quad
  h(x,y) = \{ h(x), y\}
  \quad\text{for $x$, $y\in P'$.}
\]
The definition of $\alpha_{G(P')}$ in~\eqref{eq:defalphaP} yields
\[
  \langle\alpha_{G(P')}(h(x)),{}^\conjt y\rangle = u\;
  \conjt^{-1}\bigl(h(x,y)\bigr) \qquad\text{for $x$, $y\in
    P'$.}
\]
On the other hand, from the definition of $\widehat\gamma_{P'}$ it
follows that
\[
  \langle\alpha_{G(P')}(h(x)),{}^\conjt y\rangle = \langle h'(x),
  yj\rangle = \tr(\langle h'(x),yj\rangle').
\]
Therefore,
\[
  h(x,y) = \conjt(u)^{-1} \conjt\bigl(\tr(
  h'(x,y)j)\bigr) = \conjt(u)^{-1}\tr(j\,h'(x,y))
  \quad\text{for $x$, $y\in P'$.}
\]
Similarly, for
$(P',h')\in\catS(\catP_{A'},\conjt'D',\pm\delta'_{\alpha'})$ and 
$(G(P'),h)$ with $h=\widecheck\gamma_{P'}\circ G(h')$ the image of
$(P',h')$ under $\catS(\widecheck G)$, the associated
$(\sigma,\pm\varepsilon)$-hermitian and $(\sigma'\conjt',\pm
\varepsilon_\alpha)$-hermitian forms $h$ and
$h'$ are related by
\[
  h(x,y) = u\;\tr(j^{-1}h'(x,y))
  \qquad\text{for $x$, $y\in P'$.}
\]
The functors $\catS(\widehat G)$ and $\catS(\widecheck G)$ are thus
given by the transfers $\widehat\tr$ and $\widecheck\tr$ defined by
\[
  \widehat\tr(h')(x,y) =
  \conjt(u)^{-1}\tr(j\,h'(x,y)) \qquad\text{for
    $(P', h')\in\catH^{\pm\varepsilon}(A',\sigma')$ and $x$,
    $y\in P'$}
\]
and
\[
  \widecheck\tr(h')(x,y) = u\;\tr(j^{-1}
  h'(x,y)) \qquad\text{for $(P',
    h')\in\catH^{\pm\varepsilon_\alpha}(A',\sigma'\conjt')$ and $x$,
    $y\in P'$.}
\]
Writing simply $\ext$ for the scalar extension functors
$\catS(\widehat F)$ and $\catS(\widecheck F)$, the
octagon~\eqref{eq:octasspaces} takes the form
\begin{equation}
  \label{eq:octaherm}
  \begin{split}
  \xymatrix{
    \catH^{\varepsilon}(A,\sigma)\ar[r]^-{\ext}&
    \catH^{\varepsilon}(A',\sigma')\ar[r]^-{\widehat\tr}&
    \catH^{-\varepsilon_\alpha}(A,\sigma\conjt)\ar[d]^{\ext}
    \\
    \catH^{\varepsilon_\alpha}(A',\sigma'\conjt')\ar[u]^{\widecheck\tr}
    &
    &
    \catH^{-\varepsilon_\alpha}(A',\sigma'\conjt')
    \ar[d]^{\widecheck\tr}
    \\
    \catH^{\varepsilon_\alpha}(A,\sigma\conjt)\ar[u]^{\ext}
    &
    \catH^{-\varepsilon}(A',\sigma')\ar[l]_-{\widehat\tr}
    &
    \catH^{-\varepsilon}(A,\sigma)\ar[l]_-{\ext}
  }
  \end{split}
\end{equation}

\subsection{Example: The Grenier-Boley--Mahmoudi--First octagon}
\label{subsec:GBMF}

We spell out in this subsection the relation
between~\eqref{eq:octaherm} and the octagon from~\cite{GBM} and
\cite{First}. Let $A$ be a $k$-algebra with a $k$-linear involution
$\sigma$ and let $\varepsilon\in A^\times$ be a central unit such that
$\sigma(\varepsilon)=\varepsilon^{-1}$. For every unit $\xi\in
A^\times$ we write $\Int(\xi)$ for the inner automorphism mapping
$a\in A$ to $\xi a\xi^{-1}$. Assume $A$ contains units $\zeta$, $\eta$
such that
\[
  \zeta^2\in k,\qquad \sigma(\zeta)=-\zeta, \qquad
  \sigma(\eta)=-\eta\quad\text{and}\quad
  \zeta\eta=-\eta\zeta.
\]
Let $B=C_A(\zeta)$ be the centraliser of $\zeta$ and let
\[
  \lambda_B=\eta^2\in B,\qquad
  \conjt_B=\Int(\eta)\rvert_B,\qquad
  \tau_1=\sigma\rvert_B \quad\text{and}\quad
  \tau_2=\tau_1\circ\conjt_B.
\]
The pair $(\conjt_B,\lambda_B)$ is a structure on $B$ in the sense
of~\S\ref{subsec:exconjug1}, and the pair $(\tau_1,\varepsilon)$ is an
antistructure on $B$. The unit $\lambda_B\in B^\times$ satisfies the
conditions~(i)---(iii) on $u$ in~\S\ref{subsec:exdual}, hence it
defines as in~\eqref{eq:defalphaP} a commutation $\alpha_B$ between the
conjugation derived from~$(\conjt_B,\lambda_B)$ and the duality on
$\catP_B$ derived from $(\tau_1,\varepsilon)$. The decomposition
\[
    a=\textstyle{\frac12}(a+\zeta a\zeta^{-1}) +
    \textstyle{\frac12}(a-\zeta a\zeta^{-1})
    \qquad\text{for $a\in A$}
\]
yields $A=B\oplus B\eta$, hence $A$ is the
$(\conjt_B,\lambda_B)$-twisted quadratic extension $B'$ of $B$ and
$\sigma$ is the extension $\tau_1'$ of $\tau_1$ to $B'=A$. Moreover,
$\Int(\eta\zeta)$ is the extension $\conjt'_B$ of $\conjt_B$ to $A$,
and $\varepsilon_{\alpha_B}=\varepsilon$. With these data, the
octagon~\eqref{eq:octaherm} becomes
\begin{equation}
  \label{eq:octahermGBMF}
  \begin{split}
  \xymatrix{
    \catH^{\varepsilon}(B,\tau_1)\ar[r]^-{\ext}&
    \catH^{\varepsilon}(A,\sigma)\ar[r]^-{\widehat\tr}&
    \catH^{-\varepsilon}(B,\tau_2)\ar[d]^{\ext}
    \\
    \catH^{\varepsilon}(A,\sigma\circ\Int(\eta\zeta))\ar[u]^{\widecheck\tr}
    &
    &
    \catH^{-\varepsilon}(A,\sigma\circ\Int(\eta\zeta))
    \ar[d]^{\widecheck\tr}
    \\
    \catH^{\varepsilon}(B,\tau_2)\ar[u]^{\ext}
    &
    \catH^{-\varepsilon}(A,\sigma)\ar[l]_-{\widehat\tr}
    &
    \catH^{-\varepsilon}(B,\tau_1)\ar[l]_-{\ext}
  }
  \end{split}
\end{equation}
This octagon is only superficially different from the one
in~\cite[(3.1)]{First}, which has $\catH^{\mp\varepsilon}(B,\tau_1)$
for $\catH^{\pm\varepsilon}(B,\tau_1)$ and
$\catH^{\pm\varepsilon}(A,\sigma)$ for
$\catH^{\mp\varepsilon}(A,\sigma\circ\Int(\eta\zeta))$. The
differences are resolved by scaling: if
$(Q,g)\in\catH^{\pm\varepsilon}(B,\tau_1)$, 
then $(Q,\zeta g)\in\catH^{\mp\varepsilon}(B,\tau_1)$, and if
$(P,h)\in\catH^{\mp\varepsilon}(A,\sigma\circ\Int(\eta\zeta))$, then
$(P,\zeta\eta h)\in\catH^{\pm\varepsilon}(A,\sigma)$.

Note that, by the very nature of its periodic construction, the same
octagon can still be obtained with a shift of one edge by starting
with the algebra $A$. To see this explicitly, let $\conjt_A=\Idfunc_A$
and $\lambda_A=\zeta^2\in k^\times$, and set $K=k\oplus kj$ with
$j^2=\lambda_A$, a quadratic \'etale $k$-algebra with nontrivial
automorphism $\can_{K/k}$. The $(\conjt_A,\lambda_A)$-twisted
quadratic extension of $A$ is $A'=A\otimes_kK$, with the automorphism
$\conjt'_A=\Idfunc_A\otimes\can_{K/k}$. The scalar $\lambda_A$
satisfies the conditions~(i)---(iii) on $u$ in~\S\ref{subsec:exdual},
hence it yields a commutation $\alpha_A$ between the conjugation on
$\catP_A$ derived from $(\conjt_A,\lambda_A)$ and the duality derived
from $(\sigma,\varepsilon)$. With this choice,
$\varepsilon_{\alpha_A}=\varepsilon$ and the involutions $\sigma'$ and
$\sigma'\conjt'_A$ on $A'$ are $\sigma\otimes\can_{K/k}$ and
$\sigma\otimes\Idfunc_K$ respectively. In order to relate the
resulting octagon~\eqref{eq:octaherm} with~\eqref{eq:octahermGBMF} or
\cite[(3.1)]{First}, observe that $e=\frac12(1+\zeta\otimes j^{-1})\in
A'$ is an idempotent centralised by $B$. Mapping $b\in B$ to
$e(b\otimes1)e$ identifies $B$ with
$eA'e\simeq\End_{A'}(eA')$. Following~\cite[I(9.1)]{Knus}, 
Morita theory yields an equivalence of categories
$\catP_B\xrightarrow{\sim}\catP_{A'}$ mapping $P\in\catP_B$ to
$P\otimes_BeA'$. Moreover, equivalences of categories
\[
  \Mor_1\colon\catH^{\pm\varepsilon}(B,\tau_1) \xrightarrow{\sim}
  \catH^{\pm\varepsilon}(A',\sigma') \qquad\text{and}\qquad
  \Mor_2\colon\catH^{\pm\varepsilon}(B,\tau_2) \xrightarrow{\sim}
  \catH^{\mp\varepsilon}(A',\sigma'\conjt')
\]
can be obtained by hermitian form composition with the maps $h_1$,
$h_2\colon eA'\times eA'\to A'$ 
defined by
\[
  h_1(x,y)=\sigma'(x)y \qquad\text{and}\qquad
  h_2(x,y)=\sigma'\conjt'(x)\eta y\qquad\text{for $x$, $y\in eA'$}.
\]
The map $h_1$ is a nonsingular hermitian form over $(A',\sigma')$ and
$h_2$ is a nonsingular 
skew-hermitian form over $(A',\sigma'\conjt'_A)$.

The following diagram illustrates the relation between First's octagon
in~\cite[(3.1)]{First} 
(consisting of the outer maps) and the octagon of~\eqref{eq:octaherm}
(of inner maps) with the data above. All the triangles commute up to similitude.

\begin{equation*}
  \label{eq:octafirst}
  \begin{split}
    \xymatrix{
      &&\catH^\varepsilon(B,\tau_1)\ar[dr]^{\rho_1^{(\varepsilon)}}
      \ar[d]^{\Mor_1}&&
      \\
      &
      \catH^{\varepsilon}(A,\sigma)\ar[r]^-{\ext}
      \ar[ur]^{\pi_1^{(\varepsilon)}}
      &
    \catH^{\varepsilon}(A',\sigma')\ar[r]^-{\widehat\tr}&
    \catH^{-\varepsilon}(A,\sigma)\ar[d]^{\ext}
    \ar[dr]^{\pi_2^{(-\varepsilon)}}
    &
    \\
    \catH^{-\varepsilon}(B,\tau_2)\ar[ur]^{\rho_2^{(-\varepsilon)}}
    \ar[r]^{\Mor_2}&
    \catH^{\varepsilon}(A',\sigma'\conjt')\ar[u]^{\widecheck\tr}
    &
    &
    \catH^{-\varepsilon}(A',\sigma'\conjt')
    \ar[d]^{\widecheck\tr}
    &
    \catH^\varepsilon(B,\tau_2)\ar[dl]^{\rho_2^{(\varepsilon)}}
    \ar[l]_-{\Mor_2}
    \\
    &
    \catH^{\varepsilon}(A,\sigma)\ar[u]^{\ext}
    \ar[ul]^{\pi_2^{(\varepsilon)}}
    &
    \catH^{-\varepsilon}(A',\sigma')\ar[l]_-{\widehat\tr}
    &
    \catH^{-\varepsilon}(A,\sigma)\ar[l]_-{\ext}
    \ar[dl]^{\pi_1^{(-\varepsilon)}}
    &
    \\
    && \catH^{-\varepsilon}(B,\tau_1)
    \ar[ul]^{\rho_1^{(-\varepsilon)}}
    \ar[u]^{\Mor_1}&&
  }
  \end{split}
\end{equation*}

\section{Witt groups}
\label{sec:Witt}

Throughout this section, $\catC$ is, as in the preceding sections, a
pseudo-abelian $k$-linear category where $k$ is a commutative ring in
which $2$ is invertible. We assume moreover that $\catC$ is
essentially small, which means that the isomorphism classes of objects
in $\catC$ form a set. If $\catC$ carries a conjugation
$(\conjt,\lambda)$ or a duality $(D,\delta)$, it follows that the
categories $\catC'$ and $\catS(\catC,D,\delta)$ also are essentially
small. If $(\catC,D,\delta)$ is an \emph{exact} category with duality
in the sense of~\cite[\S1.2.2]{Balmer}, then \emph{metabolic}
symmetric spaces are defined, and a classical construction due to
Knebusch \cite[Def.~27]{Balmer} yields the \emph{Witt group}
$W(\catC,D,\delta)$: this group is the quotient of the monoid of
isometry classes of symmetric spaces in $\catS(\catC,D,\delta)$ by the
submonoid of isometry classes of metabolic symmetric spaces. Thus, the
definition of $W(\catC,D,\delta)$ depends on additional structure 
on the category with duality $(\catC,D,\delta)$.

If $(\catC_1,D_1,\delta_1)$ and $(\catC_2,D_2,\delta_2)$ are exact
categories with duality and $\widetilde R=(R,\theta)\colon
(\catC_1,D_1,\delta_1)\to (\catC_2,D_2,\delta_2)$ is a morphism of
categories with duality with $R$ an exact functor, then it follows
from the definitions that the functor $\catS(\widetilde R)\colon
\catS(\catC_1,D_1,\delta_1)\to \catS(\catC_2,D_2,\delta_2)$ induces a
group homomorphism $W(\widetilde R)\colon W(\catC_1,D_1,\delta_1) \to
W(\catC_2,D_2,\delta_2)$; and if two morphisms $\widetilde R$,
$\widetilde S\colon (\catC_1,D_1,\delta_1)\to (\catC_2,D_2,\delta_2)$
preserving the exact structures are related by a similitude
$\rho\colon \widetilde R \natiso \widetilde S$ with multiplier $\mu\in
k^\times$, then the similitude $\catS(\rho)\colon \catS(\widetilde R)
\natiso \catS(\widetilde S)$ defines an isomorphism
$\langle\mu\rangle$ mapping the Witt class of
$(C,h)\in\catS(C_2,D_2,\delta_2)$ to the Witt class of $(C,\mu h)$,
which fits in 
the following commutative diagram:
\[
  \xymatrix{
    W(\catC_1,D_1,\delta_1)\ar[r]^{W(\widetilde R)}
    \ar[dr]_{W(\widetilde S)}
    &
    W(\catC_2,D_2,\delta_2)\ar[d]^{\langle\mu\rangle}
    \\
    & W(\catC_2,D_2,\delta_2)
  }
\]

In this section, we consider two cases where Knebusch's construction
turns the octagon~\eqref{eq:octasspaces} of hermitian categories into
an $8$-periodic chain complex of Witt groups: the split exact
structure and the usual exact structure in abelian categories. In each
case, the crucial property that determines the exactness of the
resulting octagon of Witt groups is the following:

\begin{property}
  Let $(C,h)\in\catS(\catC,D,\delta)$ be a symmetric space. If
  $\catS(\widehat F)(C,h)$ is metabolic, then there exists an isometry
  $\theta\colon \bigl(\conjt(C),\alpha_C\circ\conjt(h)\bigr)\to (C,h)$
  such that 
  $\theta\circ\conjt(\theta)=\lambda_C$. 
\end{property}

When this property holds for representatives of each Witt class in the
Witt groups involved in the octagon, exactness
follows from Lemma~\ref{lem:extend}.

\begin{remark}
  \label{rem:WCP}
  Note that $\catS(\widehat F)(C,h) = (F(C), \widehat\phi_C\circ
  F(h))$ with
  $(GF(C),G(\widehat\phi_C\circ F(h))) =
  (C,h)\perp(\conjt(C), -\alpha_C\circ\conjt(h))$. When Witt
  cancellation holds in $\catS(\catC,D,\delta)$, there exists an
  isometry from $(\conjt(C), \alpha_C\conjt(h))$ to $(C,h)$
  if and only if $(C,h)\perp(\conjt(C), -\alpha_C\circ\conjt(h))\simeq
  (C,h)\perp (C,-h)$, i.e., $G\catS(\widehat F)(C,h) \simeq H(C)$.
\end{remark}

\subsection{The split exact structure}
\label{subsec:split}

In this case, the exact sequences are the split sequences, hence every
additive functor preserves the exact structure. Since
$2$ is invertible in $k$, the metabolic spaces are 
the hyperbolic spaces, see~\cite[Example~21]{Balmer}. Therefore, the
Witt group $W(\catC,D,\delta)$ is the 
usual Witt group as defined in~\cite[Ch.~7, \S2]{Scharlau}.
When $\catC$ carries a conjugation $(\conjt,\lambda)$ and a commuting
duality $(D,\delta)$, the
functors in the octagon~\eqref{eq:octasspaces} induce Witt group
homomorphisms since they carry hyperbolic spaces to hyperbolic
spaces, and Theorem~\ref{thm:octamain} shows that they form an
$8$-periodic chain complex of Witt groups:
\begin{equation}
  \label{eq:octaWitt}
  \begin{split}
  \xymatrix{
    W(\catC,D,\delta)\ar[r]^-{W(\widehat F)}&
    W(\catC',D',\delta')\ar[r]^-{W(\widehat G)}&
    W(\catC,\conjt D,-\delta_\alpha)\ar[d]^{W(\widecheck F)}
    \\
    W(\catC',\conjt'D',\delta'_{\alpha'})\ar[u]^{W(\widecheck G)}
    &
    &
    W(\catC',\conjt'D',-\delta'_{\alpha'})
    \ar[d]^{W(\widecheck G)}
    \\
    W(\catC,\conjt D,\delta_\alpha)\ar[u]^{W(\widecheck F)}
    &
    W(\catC',D',-\delta')\ar[l]_-{W(\widehat G)}
    &
    W(\catC,D,-\delta)\ar[l]_-{W(\widehat F)}
  }
  \end{split}
\end{equation}
In this subsection, we provide conditions under which this chain
complex is an exact sequence.
\medbreak

For every symmetric space $(C,h)\in\catS(\catC,D,\delta)$ the
endomorphism $k$-algebra $\End_\catC(C)$ carries an involution
$\sigma_h$ (i.e., an anti-automorphism of period~$2$) defined by
\[
  \sigma_h(\varphi)=h^{-1}\circ D(\varphi)\circ h
  \qquad\text{for $\varphi\in\End_\catC(C)$}.
\]
The symmetric space $(C,h)$ is said to be \emph{split anisotropic} if
there is no nonzero direct summand $t\colon L\to C$ such that
$D(t)\circ h\circ t=0$ or, equivalently since $\catC$ is
pseudo-abelian, if there is no nonzero idempotent $e\in\End_\catC(C)$
such that $\sigma_h(e)\circ e=0$. Recall that $\End_\catC(C)$ is said
to be \emph{semilocal} if factoring out its Jacobson radical $R$
yields a $k$-algebra $\End_\catC(C)/R$ that is right artinian; it is
\emph{rad-adically complete} if it is complete for the $R$-adic
topology (in which $(R^n)_{n\in\mathbb{N}}$ is a basis of
neighborhoods of $0$), see~\cite[II(4.5)]{Knus} or
\cite[\S21]{Lam}. For example, right artinian algebras are semilocal
and rad-adically complete since their Jacobson radical is nilpotent,
see~\cite[(4.12)]{Lam}. 

The next lemma demonstrates how these properties are used in the proof
of Property~(P).

\begin{lemma}
  \label{lem:invert}
  Let $(C,h)\in\catS(\catC,D,\delta)$ be a split anisotropic symmetric
  space. If $\End_\catC(C)$ is semilocal and rad-adically complete,
  then every $\varphi\in\End_\catC(C)$ such that
  $\sigma_h(\varphi)=\Idfunc_C-\varphi$ is invertible.
\end{lemma}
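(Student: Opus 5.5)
The plan is to reduce to the semisimple quotient, show invertibility there using anisotropy, and then lift back through the radical. Write $E=\End_\catC(C)$, $R$ its Jacobson radical, and $\bar E=E/R$. Since $\sigma_h$ is an anti-automorphism, it preserves $R$, so it induces an involution $\bar\sigma$ on $\bar E$. An element of $E$ is invertible if and only if its image in $\bar E$ is invertible. It therefore suffices to show that $\bar\varphi$ is invertible in $\bar E$, where $\bar\sigma(\bar\varphi)=1-\bar\varphi$. Because $E$ is semilocal, $\bar E$ is right artinian with zero radical, hence semisimple. In a semisimple ring, one-sided invertibility implies invertibility, and an element that is not invertible has a nonzero right annihilator (equivalently, $\bar\varphi\bar E\neq\bar E$).

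The first step is to transfer the anisotropy hypothesis to $\bar E$. Split anisotropy says there is no nonzero idempotent $e\in E$ with $\sigma_h(e)e=0$. I would show that $\bar E$ has no nonzero idempotent $\bar e$ with $\bar\sigma(\bar e)\bar e=0$. Suppose such an $\bar e$ exists. Using rad-adic completeness, I would lift it to an idempotent $e\in E$ with $\sigma_h(e)e=0$. This is the main obstacle, since the ordinary idempotent lifting (Lam, \S21) does not control the condition $\sigma_h(e)e=0$.

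To handle this lifting, I would first try Knus's lifting of \emph{isotropic} idempotents / hyperbolic summands (\cite[II(4.5)]{Knus}), which is exactly the statement that, for semilocal rad-adically complete $E$, a metabolic or hyperbolic decomposition modulo $R$ lifts. Concretely, replace $\bar e$ by $\bar f=\bar e+\bar\sigma(\bar e)$-type data to obtain a pair of orthogonal idempotents $\bar e$, $\bar\sigma(\bar e)$-like pieces, lift these, and correct successively modulo $R^n$; completeness makes the corrections converge. If that citation is not available in the required form, I would instead run the standard Newton iteration on the equation $\sigma_h(x)x=0$, $x^2=x$ directly. The factor $\frac12$ makes the linearized equations solvable at each stage.

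Granting that $\bar E$ is anisotropic in this sense, suppose $\bar\varphi$ is not invertible. Then, by semisimplicity, there is a nonzero idempotent $\bar e$ with $\bar\varphi\bar e=0$. Applying $\bar\sigma$ gives $\bar\sigma(\bar e)(1-\bar\varphi)=0$, so $\bar\sigma(\bar e)=\bar\sigma(\bar e)\bar\varphi$. Hence
\[
\bar\sigma(\bar e)\bar e=\bar\sigma(\bar e)\bar\varphi\bar e=0,
\]
which contradicts the anisotropy of $\bar E$. Therefore $\bar\varphi$ is invertible in $\bar E$, and so $\varphi$ is invertible in $E$.
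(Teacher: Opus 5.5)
Your proposal is correct and follows the paper's proof: you pass to $\End_\catC(C)/R$, transfer split anisotropy to the quotient by lifting an idempotent $\bar e$ with $\overline{\sigma_h}(\bar e)\bar e=0$ (the paper also notes that Knus's reduction theory gives this), and finish with the same annihilator argument, in which $\bar\varphi\bar e=0$ forces $\overline{\sigma_h}(\bar e)\bar e=0$. The lifting step, which you leave as a sketch, is where the paper does most of its work, and it is a Newton-type iteration of the kind you describe: starting from any idempotent lift $e_0$, it sets $e_{n+1}=(\Idfunc_C+\frac12\sigma_h(e_n)e_n)^{-1}e_n(\Idfunc_C+\frac12\sigma_h(e_n)e_n)$, which remains idempotent and moves $\sigma_h(e_n)e_n$ from $R^{n+1}$ into $R^{n+2}$, and then takes the $R$-adic limit.
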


\begin{proof}
  Let $R$ be the Jacobson radical of $\End_\catC(C)$ and
  $\overline{\End_\catC(C)}=\End_\catC(C)/R$. We write
  $\overline\varphi$ for the image of $\varphi\in\End_\catC(C)$ in
  $\overline{\End_\catC(C)}$. Properties of the Jacobson radical imply
  that $\varphi$ is invertible if $\overline\varphi$ is invertible
  (see~\cite[II(4.2.2)]{Knus} or \cite[(4.8)]{Lam}), and that
  idempotents in $\overline{\End_\catC(C)}$ lift to idempotents in
  $\End_\catC(C)$ when $\End_\catC(C)$ is rad-adically complete
  (see~\cite[II(4.5.4)]{Knus} or \cite[(21.31)]{Lam}). Since
  $\sigma_h$ is an anti-automorphism, it preserves $R$ and induces an
  involution $\overline{\sigma_h}$ on $\overline{\End_\catC(C)}$.

    As a first step in the proof, we show that if
    $\varepsilon\in\overline{\End_\catC(C)}$ is an idempotent such
    that $\overline{\sigma_h}(\varepsilon)\varepsilon=0$, then
    $\varepsilon=0$. This follows from the hypothesis that $(C,h)$ is
    split anisotropic by the reduction theory in~\cite[II.4]{Knus} or
    \cite[Ch.~7, \S4]{Scharlau}; the following is an easy direct
    proof.

    Let $e_0\in\End_\catC(C)$ be an idempotent such that
    $\overline{e_0}=\varepsilon$. We construct a sequence of
    idempotents $(e_n)_{n\in\mathbb{N}}$ in $\End_\catC(C)$ such that
    $e_{n+1}\equiv e_n\bmod R^{n+1}$ and $\sigma_h(e_n)\circ e_n \in
    R^{n+1}$ for all $n\in\mathbb{N}$: if $e_n$ is given, then
    $\Idfunc_C+\frac12\sigma_h(e_n)\circ e_n$ is invertible since
    $\overline{\Idfunc_C+\frac12\sigma_h(e_n)e_n} =
    \overline{\Idfunc_C}$. We may therefore set
    \[
      e_{n+1} = (\Idfunc_C+\textstyle{\frac12}\sigma_h(e_n)\circ
      e_n)^{-1} \circ e_n \circ
      (\Idfunc_C+\textstyle{\frac12}\sigma_h(e_n)\circ e_n).
    \]
    This is clearly an idempotent, and $e_{n+1}\equiv e_n\bmod
    R^{n+1}$ since $\frac12\sigma_h(e_n)\circ e_n\in R^{n+1}$. This
    last property also implies
    \begin{multline*}
      e_{n+1}\equiv (\Idfunc_C-\textstyle{\frac12}\sigma_h(e_n)\circ
      e_n) \circ 
      e_n \circ (\Idfunc_C+\textstyle{\frac12}\sigma_h(e_n)\circ e_n)
      \\
      \equiv e_n- \textstyle{\frac12}\sigma_h(e_n)\circ e_n +
      \textstyle{\frac12} e_n\circ \sigma_h(e_n)\circ e_n \bmod
      R^{n+2},
    \end{multline*}
    whence $\sigma_h(e_{n+1})\circ e_{n+1} \in R^{n+2}$. Since
    $\End_\catC(C)$ is assumed to be $R$-adically complete, there
    exists $e_\infty\in\End_\catC(C)$ such that $e_\infty\equiv e_n
    \bmod R^{n+1}$ for all $n\in\mathbb{N}$. This endomorphism is
    idempotent and $\sigma_h(e_\infty)\circ e_\infty=0$. But $(C,h)$
    is assumed to be split anisotropic, hence $e_\infty=0$ and
    therefore $\varepsilon=0$.

    Now, suppose $\varphi\in\End_\catC(C)$ is such that
    $\sigma_h(\varphi) = \Idfunc_C-\varphi$, hence
    $\overline{\sigma_h}(\overline\varphi) = \overline{\Idfunc_C} -
    \overline\varphi$. The Wedderburn--Artin structure theorem
    (see~\cite[(3.5)]{Lam}) shows that $\overline{\End_\catC(C)}$ is
    isomorphic to a direct product of matrix algebras over division
    algebras. Therefore, $\overline\varphi$ is invertible unless there
    exists a nonzero idempotent
    $\varepsilon\in\overline{\End_\catC(C)}$ such that
    $\overline\varphi\,\varepsilon=0$. But if
    $\overline\varphi\,\varepsilon=0$, then
    \[
      \overline{\sigma_h}(\varepsilon)\varepsilon =
      \overline{\sigma_h}(\varepsilon)\,(\overline{\Idfunc_C} -
      \overline\varphi)\, \varepsilon =
      \overline{\sigma_h}(\overline\varphi\, \varepsilon)\varepsilon =
      0.
    \]
    We just saw that this implies $\varepsilon=0$, hence
    $\overline\varphi$ is invertible and $\varphi$ is invertible.
  \end{proof}

\begin{proposition}
  \label{prop:Psplit}
  Property~(P) holds for $(C,h)\in\catS(\catC,D,\delta)$ if $(C,h)$ is
  split anisotropic and $\End_\catC(C)$ is semilocal and rad-adically
  complete. 
\end{proposition}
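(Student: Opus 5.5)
The plan is to extract $\theta$ from a Lagrangian of $\catS(\widehat F)(C,h)$ and to use Lemma~\ref{lem:invert} to show that the relevant block is invertible. For the split exact structure, metabolic means hyperbolic. So assume $\catS(\widehat F)(C,h)=(F(C),h')$, with $h'=\widehat\phi_C\circ F(h)$, is hyperbolic. We then get a decomposition $F(C)=L'\oplus M'$ in $\catC'$ into two Lagrangians. Equivalently, there are complementary idempotents $p,\ \Idfunc_{F(C)}-p\in\End_{\catC'}(F(C))$ with isotropic images. Isotropy of both images means
\[
\sigma_{h'}(p)=\Idfunc_{F(C)}-p ,
\]
where $\sigma_{h'}$ is the adjoint involution of $h'$.

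Now pass to $\catC$ through $G$. By Remark~\ref{rem:WCP}, $G(F(C),h')=(C,h)\perp(\conjt(C),-\alpha_C\circ\conjt(h))$. For an orthogonal sum, the adjoint of a block matrix has as its diagonal blocks the adjoints of the diagonal blocks. Write $G(p)=\bigl(\begin{smallmatrix}\varphi&x\\ y&z\end{smallmatrix}\bigr)$ with $\varphi\in\End_\catC(C)$. Then $\sigma_h(\varphi)=\Idfunc_C-\varphi$. Since $(C,h)$ is split anisotropic and $\End_\catC(C)$ is semilocal and rad-adically complete, Lemma~\ref{lem:invert} makes $\varphi$ invertible.

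Next, use that $p$ is a morphism in $\catC'$, i.e.\ $G(p)\circ\Lambda_C=\Lambda_C\circ\conjt(G(p))$. This forces $z=\conjt(\varphi)$ up to conjugation by $\lambda$, and relates $y$ to $\conjt(x)$ through $\lambda_C$. I would then write the image $L'$ as $t'=\bigl(\begin{smallmatrix}a\\ b\end{smallmatrix}\bigr)\colon L'\to F(C)$, with $p=t's'$. The $\catC'$ condition then reads
\[
a\circ f_{L'}=\lambda_C\circ\conjt(b),\qquad b\circ f_{L'}=\conjt(a),
\]
and isotropy of $L'$ reads $D(a)\,h\,a=D(b)\,\alpha_C\conjt(h)\,b$. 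Since $\varphi=a\circ s'_1$ is invertible, $a$ is a split epimorphism, and by the $\catC'$ relations so is $b$.

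The candidate isometry is $\theta=a\circ b^{-1}\colon\conjt(C)\to C$. Isotropy then says exactly that $\theta$ is an isometry $(\conjt(C),\alpha_C\circ\conjt(h))\to(C,h)$. The two relations give
\[
\theta\circ\conjt(\theta)=a\,b^{-1}\,\conjt(a)\,\conjt(b)^{-1}=a\,f_{L'}\,\conjt(b)^{-1}=\lambda_C ,
\]
which is the required identity.

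The main obstacle is proving that $a$ (hence $b$) is an isomorphism and not merely a split epimorphism; in an abstract category there is no rank count to fall back on. I would first try to show $a$ is a monomorphism by an anisotropy argument. The splitting $s'_1\varphi^{-1}$ gives an idempotent $e=s'_1\varphi^{-1}a$ on $L'$. Its complementary summand $\ker a$ maps into $0\oplus\conjt(C)$ and is isotropic there. Transporting it by $\conjt$ and $f$, it gives an isotropic summand of $(C,-h)$, or equivalently of $(C,h)$, which split anisotropy kills.

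If that fails, the fallback is to avoid $a$ and define $\theta=\varphi^{-1}\circ x$ directly from the blocks of $G(p)$. One then verifies the isometry and $\theta\conjt(\theta)=\lambda_C$ from $p^2=p$, $\sigma_{h'}(p)=\Idfunc-p$ and the $\Lambda_C$-compatibility. This is a block-matrix computation in which invertibility of $\varphi$ is the only nontrivial input.
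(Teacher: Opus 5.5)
Your proof is correct, but it takes a different route from the paper's. The paper never factors the idempotent through the Lagrangian. It writes $e=\bigl(\begin{smallmatrix}\varphi_0&\varphi_1\\ \conjt(\varphi_1)\lambda_C^{-1}&\conjt(\varphi_0)\end{smallmatrix}\bigr)$ and uses $\sigma_h(\varphi_0)=\Idfunc_C-\varphi_0$ to turn $e^2=e$ into the two identities $\varphi_1\conjt(\varphi_1)\lambda_C^{-1}=\sigma_h(\varphi_0)\varphi_0$ and $\varphi_1\conjt(\varphi_0)=\sigma_h(\varphi_0)\varphi_1$. Once Lemma~\ref{lem:invert} makes $\varphi_0$ invertible, the first identity gives invertibility of $\varphi_1$ at no extra cost. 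The paper then sets $\theta=\sigma_h(\varphi_0)^{-1}\varphi_1=\varphi_1\conjt(\varphi_0)^{-1}$ and checks both requirements by a short computation. Split anisotropy is thus used only once, inside Lemma~\ref{lem:invert}.

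Your main route writes $p=t's'$ with $t'=\bigl(\begin{smallmatrix}a\\ b\end{smallmatrix}\bigr)$ and invokes split anisotropy a second time, to upgrade the split epimorphism $a$ to an isomorphism. In exchange, $\theta=a\circ b^{-1}$ makes both the isometry property and $\theta\conjt(\theta)=\lambda_C$ immediate from isotropy and the $\catC'$-relations. This is the same mechanism as the paper's proof of Lemma~\ref{lem:stronghyp} for abelian categories. When $a,b$ are invertible, $ab^{-1}=as'_2(bs'_2)^{-1}$ coincides with the paper's $\theta$.

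Your second anisotropy step works, but you should state it with $f=f_{L'}$, since no $f$ is attached to $C$ itself. Let $k\colon K\to G(L')$ split $\Idfunc-s'_1\varphi^{-1}a$. Then $ak=0$ and $bk$ is a split monomorphism. Also $t'f_{L'}\conjt(k)=\bigl(\begin{smallmatrix}\lambda_C\conjt(bk)\\ 0\end{smallmatrix}\bigr)$, so $\lambda_C\conjt(bk)$ is an isotropic direct summand of $(C,h)$. This forces $\conjt(K)=0$, hence $K=0$.

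Your fallback is essentially the paper's computation, with one difference. Your $\varphi^{-1}x$ is not the paper's $\theta$: it differs from it by the isometry $\varphi^{-1}(\Idfunc_C-\varphi)$ of $(C,h)$. It still satisfies both requirements. Its invertibility comes from $\theta\conjt(\theta)=\lambda_C$, or from $x\conjt(x)\lambda_C^{-1}=\sigma_h(\varphi)\varphi$, rather than from invertibility of $\varphi$ alone.
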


\begin{proof}
  Let $\catS(\widehat F)(C,h) = (F(C),h')$ where
  $h'=\widehat\phi_C\circ F(h) = \bigl(
  \begin{smallmatrix}
    h&0\\ 0&-\alpha_C\circ \conjt(h)
  \end{smallmatrix}
  \bigr)$. Assume $(F(C),h')\simeq H(L')$ for some $L'\in\catC'$,
  hence $F(C)$ may be identified with $L'\oplus D'(L')$. Projection on
  $L'$ yields an 
  idempotent $e\in\End_{\catC'}(F(C))$ such that $\Idfunc_{F(C)}-e$ is
  the projection on $D'(L')$. The involution adjoint to the hyperbolic
  form on $L'\oplus D'(L')$ interchanges the projections on $L'$ and
  on $D'(L')$, hence $\sigma_{h'}(e)=\Idfunc_{F(C)}-e$. Viewing
  $\End_{\catC'}(F(C))\subset \End_\catC(C\oplus\conjt(C))$, we may
  write $e=\bigl(
  \begin{smallmatrix}
    \varphi_0&\varphi_1\\ \varphi_2&\varphi_3
  \end{smallmatrix}
  \bigr)$ for some $\varphi_0\in\End_\catC(C)$,
  $\varphi_1\in\Hom_\catC(\conjt(C),C)$,
  $\varphi_2\in\Hom_\catC(C,\conjt(C))$ and
  $\varphi_3\in\End_\catC(\conjt(C))$. Because $e$ is a morphism in
  $\catC'$, we have $\varphi_2=\conjt(\varphi_1)\circ\lambda_C^{-1}$
  and $\varphi_3=\conjt(\varphi_0)$. The equation $e^2=e$ then yields
  \begin{equation}
    \label{eq:P1}
    \varphi_0^2+\varphi_1\circ\conjt(\varphi_1)\circ\lambda_C^{-1} =
    \varphi_0 \quad\text{and}\quad
    \varphi_0\circ\varphi_1+\varphi_1\circ\conjt(\varphi_0) = \varphi_1.
  \end{equation}
  On the other hand, the condition $\sigma_{h'}(e)=\Idfunc_{F(C)}-e$
  yields
  \begin{equation}
    \label{eq:P2}
    \sigma_h(\varphi_0)=\Idfunc_C-\varphi_0 \quad\text{and}\quad
    h^{-1}\circ D(\conjt(\varphi_1)\circ\lambda_C^{-1})\circ
    \alpha_C\circ \conjt(h)=\varphi_1.
  \end{equation}
  Using the first equation in~\eqref{eq:P2}, we may rewrite the
  equations in~\eqref{eq:P1} as
  \begin{equation}
    \label{eq:P3}
    \varphi_1\circ\conjt(\varphi_1)\circ \lambda_C^{-1} =
    \sigma_h(\varphi_0)\circ \varphi_0 \quad\text{and}\quad
    \varphi_1\circ \conjt(\varphi_0) = \sigma_h(\varphi_0)\circ \varphi_1.
  \end{equation}
  Moreover, the first equation in~\eqref{eq:P2} shows by
  Lemma~\ref{lem:invert} that $\varphi_0$ is invertible, hence it
  follows from the first equation in~\eqref{eq:P3} that $\varphi_1$ is
  also invertible. We may therefore define an isomorphism
  $\theta\colon \conjt(C)\to C$ by
  $\theta=\sigma_h(\varphi_0)^{-1}\circ \varphi_1$. The equations
  in~\eqref{eq:P3} yield
  \[
    \theta = \varphi_0\circ \lambda_C\circ \conjt(\varphi_1)^{-1} =
    \varphi_1\circ \conjt(\varphi_0)^{-1}.
  \]
  Therefore, $\theta\circ \conjt(\theta) = \varphi_0\circ \lambda_C
  \circ \conjt(\varphi_1)^{-1}\circ \conjt(\varphi_1)\circ
  \conjt\conjt(\varphi_0)^{-1} = \lambda_C$. Moreover, the second
  equation in~\eqref{eq:P2} yields
  \[
    \alpha_C\circ\conjt(h) = D(\lambda_C\circ\conjt(\varphi_1)^{-1})
    \circ h \circ \varphi_1 = D(\varphi_0^{-1}\circ\theta)\circ h\circ
    \varphi_1.
  \]
  But, by definition, $\theta=\sigma_h(\varphi_0)^{-1}\circ \varphi_1
  = h^{-1}\circ D(\varphi_0)^{-1}\circ h\circ \varphi_1$, hence we
  obtain by substituting in the last equation
  \[
    \alpha_C\circ\conjt(h) = D(\theta)\circ h\circ\theta.
  \]
  This shows that $\theta$ is an isometry
  $(\conjt(C),\alpha_C\circ\conjt(h)) \to (C,h)$ and completes the proof.
\end{proof}

Next, we show that the hypotheses on endomorphism algebras in
Proposition~\ref{prop:Psplit} pass on to the double category.

\begin{proposition}
  \label{prop:Psplit2}
  Let $C'=(C,f)\in\catC'$. If $\End_\catC(C)$ is semilocal and
  rad-adically complete, then $\End_{\catC'}(C')$ also is semilocal
  and rad-adically complete.
\end{proposition}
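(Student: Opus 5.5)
The plan is to realise $E'=\End_{\catC'}(C')$ as a fixed-point subalgebra of $E=\End_\catC(C)$ under a $k$-algebra automorphism of order dividing $2$. Since $f\colon\conjt(C)\to C$ is an isomorphism, define
\[
  \kappa\colon E\to E,\qquad \kappa(\varphi)=f\circ\conjt(\varphi)\circ f^{-1}.
\]
This is a $k$-algebra automorphism. Using $f\circ\conjt(f)=\lambda_C$ and the naturality of $\lambda$, we get $\kappa^2(\varphi)=\lambda_C\circ\conjt\conjt(\varphi)\circ\lambda_C^{-1}=\varphi$. By the definition of morphisms in $\catC'$, we have $E'=\{\varphi\in E\mid\kappa(\varphi)=\varphi\}$. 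Since $\frac12\in k$, $E$ splits as $E=E'\oplus E^-$, where $E^-$ is the $(-1)$-eigenspace. The projection $p=\frac12(\Idfunc+\kappa)\colon E\to E'$ is $E'$-bilinear, and $E^-E^-\subset E'$, $E'E^-\subset E^-$. In particular, $E$ is a finitely generated module (generated by $1$) over the subring $E'$ on both sides, via the retraction $p$.

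To handle the radical, let $R=\rad(E)$ and $R'=\rad(E')$. Since $\kappa$ is an automorphism, $\kappa(R)=R$. I will first show that $R\cap E'=R'$. If $x\in R\cap E'$, then $1-yx$ is invertible in $E$ for every $y\in E'$, and its inverse is $\kappa$-fixed, hence lies in $E'$; so $x\in R'$. Conversely, let $x\in R'$. The ideal $ERE$ generated by $x$ in $E$ satisfies $ExE\subseteq E'x E'+E^-xE'+E'xE^-+E^-xE^-$. The standard argument for fixed rings of finite groups of order invertible in the ring (the case $G=\mathbb{Z}/2$ here) gives $E\,R'\,E\subseteq R$. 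I would verify this directly by showing that $1+ExE$-type elements are units: if $a=1+z$ with $z\in ER'E$, then $a\kappa(a)$ and $\kappa(a)a$ are handled via the norm-like trick. Alternatively, I could use that $E$ is finitely generated as a right $E'$-module, so Nakayama gives $R'E\subseteq\rad(E)$ (a standard fact for finite ring extensions). I expect this step, $R'\subseteq R$, to be the main obstacle, and I would rely on the latter Nakayama-type argument: for a simple right $E$-module $S$, $S$ is finitely generated over $E'$, so $SR'\neq S$; $SR'E=SR'$ is an $E$-submodule because $E'$-conjugation respects the grading, which forces $SR'=0$.

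Semilocality then follows. $\overline E=E/R$ is semisimple artinian with induced involutive automorphism $\bar\kappa$. The fixed algebra $\overline E^{\bar\kappa}$ contains $E'/R'$, and in fact equals it: any fixed class lifts via $p$ to a fixed element. The fixed ring of a semisimple artinian ring under a finite group of invertible order is semisimple artinian (again because $p$ is a trace-like projection, so $\overline E$ is a finitely generated $E'/R'$-module and the Jacobson radical is zero). Hence $E'/R'$ is artinian.

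For completeness, note that $R'^n\subseteq R^n\cap E'$. Conversely, by the fact that $\kappa$ preserves $R^n$ and the $R'$-adic and $R$-adic topologies are comparable on $E'$, I will show that $R^{2n}\cap E'\subseteq R'^n$ up to a fixed shift. I expect this to follow from $R\cap E'=R'$ and $R^-:=R\cap E^-$ satisfying $(R^-)^2\subseteq R'$, since a product of $2n$ elements of $R$, projected by $p$, expands into terms each containing at least $n$ factors in $R'$. Then $E'=p(E)$ is a closed direct summand of a complete space, so a Cauchy sequence in $E'$ converges in $E$ and its limit is $\kappa$-fixed, giving completeness.
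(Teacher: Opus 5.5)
Your reduction to the fixed algebra $E'=E^{\kappa}$ and your proof of $R\cap E'\subseteq R'$ agree with the paper. The rest of the algebraic part, however, rests on the claim that $E$ is finitely generated (``by $1$'') over $E'$, and that claim is false. Generation by $1$ would mean $E=E'$; the retraction $p$ only makes $E'$ a bimodule direct summand of $E$.

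For a concrete case, let $k$ be a field of characteristic not $2$ and $\catC=\catP_A$ with $A=k\langle\langle x,y\rangle\rangle$ and $C=A$. Take $f({}^\conjt a)=\tau(a)$ for the conjugation derived from the structure $(\tau,1)$, where $\tau(x)=x$ and $\tau(y)=-y$. Then $E=A$, $\kappa=\tau$, and $E'$ consists of the series supported on words of even $y$-degree. The coefficients of the words $yx^j$, $j\geq0$, are infinitely many independent functionals vanishing on $R'E$, so $E$ is not even finitely generated as a left $E'$-module.

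Consequently your Nakayama argument for $R'\subseteq R$ has no basis, and you give no reason why $SR'$ should be an $E$-submodule. Likewise ``finitely generated, hence radical zero'' does not show that $\overline{E}^{\bar\kappa}$ is semisimple artinian. The paper supplies exactly this missing piece as Lemma~\ref{lem:Psplit2}(a). For $E$ semisimple artinian:
\begin{itemize}
\item $E'$ is right artinian because $I=(IE)\cap E'$ for every right ideal $I$ of $E'$;
\item $R'=0$ because $R'\oplus R'E^-$ is a nil right ideal of $E$: the monomial bookkeeping puts $(r+s)^{2N-1}$ in $E\,R'^{\,N}E$, and $R'$ is nilpotent.
\end{itemize}
Applied to $E/R$, whose $\bar\kappa$-fixed algebra is $E'/(E'\cap R)$, this gives $R'\subseteq R$, hence $R'=E'\cap R$ and semilocality. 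Alternatively, $R'=E'\cap R$ is a known theorem on fixed rings of finite groups of invertible order, but it needs a proof or a reference, and your sketch supplies neither.

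The completeness step fails, and not merely in its justification. Write $r=r^++r^-$ with $\kappa(r^\pm)=\pm r^\pm$. A fixed term such as $r_1^-r_2^+\cdots r_{2n-1}^+r_{2n}^-$ has its fixed factors sandwiched between anti-fixed ones, so it is in general not a product of $n$ elements of $R'$. The inclusion $R^{2n}\cap E'\subseteq R'^{\,n}$ is therefore false even up to a shift. In the example, $z_m:=yx^my$ lies in $R^{m+2}\cap E'$ but not in $R'^{\,2}$, because every word occurring in an element of $R'^{\,2}$ is a concatenation of two nonempty words of even $y$-degree. So the $R'$-adic topology on $E'$ is strictly finer than the one induced from $E$. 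Your closed-subspace argument only gives completeness for the filtration $(E'\cap R^n)_n$, and the paper's one-line justification (``clear from the definitions'') passes over the same point.

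In fact $E'$ is not $R'$-adically complete here. The sequence $b_N=\sum_{m=2}^Nz_m^m$ is $R'$-adically Cauchy, and since $R'^{\,n}\subseteq R^n$ its only possible limit is $\sum_{m\geq2}z_m^m$. Yet no tail $\sum_{m>N}z_m^m$ lies in $R'^{\,2}$, for the following reason:
\begin{itemize}
\item $E'$ is the algebra of all formal series in the free letters $x,z_0,z_1,\dots$.
\item Suppose a tail were $\sum_{i=1}^ra_ib_i$ with $a_i,b_i\in R'$. Comparing words beginning with $z_m$ gives $z_m^{m-1}\in\sum_iE'b_i$ for all $m>N$.
\item Choose $N<m_1<\dots<m_s$ with $s>r$, and map $E'\to k[[t_1,\dots,t_s]]$ by $z_{m_j}\mapsto t_j$ and all other letters $\mapsto0$. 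The images of the $b_i$ generate a proper ideal containing a power of every $t_j$.
\item That ideal has height $s$ but only $r$ generators, contradicting Krull's height theorem.
\end{itemize}
So the completeness half of the statement cannot be proved in this generality. What does hold is completeness for $(E'\cap R^n)_n$, which suffices for the idempotent lifting in Lemma~\ref{lem:invert}. Genuine $R'$-adic completeness also holds when, for example, $R$ is nilpotent.
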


\begin{proof}
  Let $\tau\colon \End_\catC(C)\to \End_\catC(C)$ be the automorphism
  defined by
  \[
    \tau(\varphi)=f\circ\conjt(\varphi)\circ f^{-1} \qquad\text{for
      $\varphi\in\End_\catC(C)$.}
  \]
  It satisfies $\tau^2(\varphi)=\varphi$ for all
  $\varphi\in\End_\catC(C)$ and
  \[
    \End_{\catC'}(C') = \{\varphi\in\End_\catC(C)\mid \tau(\varphi) =
    \varphi\}.
  \]
  Therefore, the proposition follows from~(b) in the next
  ring-theoretic result:
  \renewcommand{\qed}{\relax}
\end{proof}

\begin{lemma}
  \label{lem:Psplit2}
  Let $A$ be a $k$-algebra and $\tau\colon A\to A$ an automorphism
  such that $\tau^2=\Idfunc_A$, and set $B=\{a\in A\mid
  \tau(a)=a\}$. Write $\rad(A)$ (resp.\ $\rad(B)$) for the Jacobson
  radical of $A$ (resp.\ $B$).
  \begin{enumerate}
  \item[(a)]
    If $A$ is right artinian and $\rad(A)=0$, then $B$ also is right
    artinian and $\rad(B)=0$.
  \item[(b)]
    If $A$ is semilocal, then $\rad(B)=B\cap\rad(A)$ and $B$ is
    semilocal. If moreover $A$ is rad-adically complete, then $B$ also
    is rad-adically complete.
  \end{enumerate}
\end{lemma}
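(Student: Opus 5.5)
The basic tool will be the $\tau$-eigenspace decomposition. Since $\frac12\in k$ and $\tau^2=\Idfunc_A$, we have $A=B\oplus A_-$ with $A_-=\{a\mid\tau(a)=-a\}$, and the map $\pi(a)=\frac12(a+\tau(a))$ is a $B$-bimodule projection $A\to B$. Two consequences will be used repeatedly. First, for every right ideal $I$ of $B$ we get $IA\cap B=\pi(IA)=I\pi(A)=I$. Second, an element $b\in B$ that is invertible in $A$ is invertible in $B$, because $\tau(b^{-1})=b^{-1}$.

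For (a), the chain condition transfers directly. The map $I\mapsto IA$ is injective and inclusion-preserving on right ideals of $B$, so $B$ is right artinian whenever $A$ is. For the radical, let $J=\rad(B)$. I will show that $AJA$ is a nilpotent ideal of $A$, which forces $AJA=0$ and hence $J=0$, since $A$ is semisimple. Because $B$ is right artinian, $J$ is nilpotent, say $J^n=0$. The key observation is that $A_-$ normalises $J$ up to conjugation by $\tau$. For $c\in A_-$ we have $cJc\subset B$ and $cJ\subset A_-$, and I claim that $cjc'\in J$ for $c,c'\in A_-$ and $j\in J$. To see this, note that $cjc'$ lies in the two-sided ideal of $B$ generated by products of the form $A_-JA_-$, and that $A_-JA_-$ is an ideal of $B$ on which $1+x$ is invertible for every $x$. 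I would check this last step via the inverse of $1+x$ in $A$ and the projection $\pi$, and I expect it to be the delicate point. A cleaner route, if it works, is to bypass it by using the fact that $A\cong B\oplus A_-$ is a finitely generated right $B$-module, since $A_-$ is a $B$-submodule of $A$, which is artinian over $B$. Then $\rad(B)A\subset\rad(A)$ follows from the standard fact that for a ring extension $B\subset A$ with $A=B+\tau$-twisted part and $\frac12\in k$, the group $\mathbb{Z}/2$ acts and $B=A^{\mathbb{Z}/2}$ with $|G|$ invertible. By the theory of fixed rings under finite groups of invertible order (Bergman–Isaacs / Montgomery), $\rad(A^G)=A^G\cap\rad(A)$. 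I would prove this $\mathbb{Z}/2$ case directly: if $b\in\rad(B)$ and $a\in A$, then to show $1-ab$ has a left inverse it suffices to handle $1-a_+b$ and $1-a_-b$. Here $(a_-b)^2=a_-ba_-b\in\rad(B)\cdot$ stuff in $B$, and writing $a_-ba_-\in B$ gives $(a_-b)^2\in B\rad(B)\subset\rad(B)$. So $1-(a_-b)^2$ is invertible in $B$, and hence $1-a_-b$ is invertible. This shows $\rad(B)\subset\rad(A)$. The reverse inclusion $B\cap\rad(A)\subset\rad(B)$ is clear from the second consequence above. Granting $\rad(B)=B\cap\rad(A)$, part (a) follows immediately.

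For (b), the equality $\rad(B)=B\cap\rad(A)$ is the same argument without the artinian hypothesis. Since $\tau$ preserves $\rad(A)$, it induces an involutive automorphism of $\bar A=A/\rad(A)$, whose fixed ring is $\pi(A)/\pi(\rad(A))=B/(B\cap\rad(A))=B/\rad(B)$ by exactness of $\pi$. By (a), $B/\rad(B)$ is then right artinian, so $B$ is semilocal. For completeness, I would prove $\rad(A)^{2n}\cap B\subset\rad(B)^n$ for all $n$, up to a fixed shift. The expected approach is to write $\rad(A)=\rad(B)\oplus(\rad(A)\cap A_-)$ and note that products of two elements of $\rad(A)\cap A_-$ land in $\rad(B)$. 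Expanding a product of $2n$ factors, each pair then contributes either a factor in $\rad(B)$ or an even pair in $\rad(B)$, with odd leftovers handled by the final projection $\pi$. Combined with $\rad(B)^n\subset\rad(A)^n\cap B$, this shows the $\rad(B)$-adic and induced $\rad(A)$-adic topologies on $B$ coincide. Since $B$ is closed in $A$ (it is the kernel of the continuous map $1-\tau$), it is complete. The main obstacle is this comparison of topologies, in particular making the pairing bookkeeping precise.
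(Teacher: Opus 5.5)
\textbf{The gap.} The step you flag as the main obstacle is where the proposal fails. The inequality $\rad(A)^{2n}\cap B\subseteq\rad(B)^n$ is false, and no shift rescues it.

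Take $k=\mathbb{Q}$ and $A=k\langle\!\langle x,y\rangle\!\rangle$, the noncommutative power series. This ring is local, its radical is the ideal of series without constant term, and $\rad(A)^n$ consists of the series of order $\geq n$, so $A$ is rad-adically complete. Let $\tau(x)=x$ and $\tau(y)=-y$. Then $B$ consists of the series supported on words with an even number of $y$'s. We have $yx^my\in\rad(A)^{m+2}\cap B$. However, $yx^my\notin\rad(B)^2$: every word occurring in an element of $\rad(B)^2$ is a product of two nonempty words, each with an even number of $y$'s. So $\rad(A)^N\cap B\not\subseteq\rad(B)^2$ for every $N$.

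Your pairing bookkeeping breaks exactly here. In a product $e\,b_1\cdots b_m\,e'$ with $e,e'\in\rad(A)\cap A_-$ and $b_j\in\rad(B)$, every left segment $e\,b_1\cdots b_j$ lies in $A_-$. So nothing can be regrouped into $\rad(B)$-factors.

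\textbf{The conclusion itself fails in this example.} Closedness of $B$ in $A$ only gives completeness for the induced filtration $\bigl(B\cap\rad(A)^n\bigr)_n$, and that is not enough here.
\begin{enumerate}
\item[(1)] Put $z_m=yx^my$. Then $B$ is the algebra of all formal series in the free generators $x,z_0,z_1,\dots$.
\item[(2)] The sequence $s_N=\sum_{m\le N}z_mx^mz_m$ is $\rad(B)$-adically Cauchy. Its only possible limit is its $\rad(A)$-adic limit $s$, since $\rad(B)^n\subseteq\rad(A)^n$.
\item[(3)] Yet $s-s_N\notin\rad(B)^2$ for every $N$. Suppose $s-s_N=\sum_{i=1}^r g_ih_i$ with $g_i,h_i\in\rad(B)$.
\item[(4)] Strip a first letter $z_m$ and a last letter $z_{m'}$ from both sides. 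This is compatible with the products because the $g_i,h_i$ have no constant term.
\item[(5)] Then kill all $z_j$ via the homomorphism $B\to k[[x]]$. The result is a factorisation of $\mathrm{diag}(x^m)_{N<m\le N+r+1}$ through $k[[x]]^r$, which is impossible.
\end{enumerate}

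The paper's proof does not supply this step either. It calls $(B\cap\rad(A))$-adic completeness ``clear from the definitions''. That is true for the filtration $B\cap\rad(A)^n$, but not for $(B\cap\rad(A))^n=\rad(B)^n$. What can actually be proved is one of the following:
\begin{enumerate}
\item[(i)] completeness of $B$ for the induced filtration;
\item[(ii)] the full statement under an extra hypothesis such as nilpotence of $\rad(A)$, e.g.\ $A$ right artinian, where both topologies are discrete.
\end{enumerate}

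\textbf{The rest of the proposal.} Part (a) and the first half of (b) are sound. Your treatment of the radical is a genuinely different and better route than the paper's.

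The paper proves $\rad(B)=0$ in (a) by showing that the right ideal $\rad(B)\oplus\rad(B)A_-$ is nil (the paper's $E$ is your $A_-$). This uses monomial bookkeeping that relies on nilpotence of $\rad(B)$. Only afterwards does it obtain $\rad(B)=B\cap\rad(A)$ in (b), by passing to $A/\rad(A)$. Your quasi-regularity argument gives $\rad(B)=B\cap\rad(A)$ for every $A$, with no finiteness hypothesis, and both radical statements follow at once.

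One step needs a line of justification. Invertibility of $1-a_+b$ and of $1-a_-b$ separately does not imply invertibility of $1-ab$. Instead write $1-ab=(1-a_+b)(1-cb)$ with $c=(1-a_+b)^{-1}a_-\in A_-$, and apply your $A_-$-argument to $c$.

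The detours through $AJA$, through $cjc'\in J$, and through finite generation of $A$ over $B$ are unnecessary and should be dropped. The last of these is also unjustified.
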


\begin{proof}
  (a)
  Setting $E=\{a\in A\mid \tau(a)=-a\}$ yields a $k$-module
  decomposition $A=B\oplus E$. The $k$-module $E$ is a $B$-bimodule,
  hence for every right ideal $I\subset B$
  \[
    I\cdot A = I\oplus I\cdot E
    \qquad\text{and}\qquad
    I=(I\cdot A)\cap B.
  \]
  If $(I_n)_{n\in\mathbb{N}}$ is a descending chain of right ideals in
  $B$, then there is an integer $N$ such that
  $I_n\cdot A=I_N\cdot A$ for all $n\geq N$,
  hence
  \[
    I_n=(I_n\cdot A)\cap B = (I_N\cdot
    A)\cap B=I_N
    \quad\text{for all $n\geq N$.}
  \]
  Thus, $B$ is right artinian.

  In order to show that $\rad(B)=0$, we show that the
  right ideal $\rad(B)\oplus \rad(B)\cdot E$ in $A$ is a
  nilideal. This ideal is therefore contained in $\rad(A)$
  by~\cite[(4.11)]{Lam}, hence it vanishes 
  since $\rad(A)=0$. This is thus
  sufficient to prove that $\rad(B)=0$.

  For the rest of the proof, fix $r\in \rad(B)$ and $s\in \rad(B)\cdot
  E$, and let 
  $M(d)$ denote the set of all products of $2d-1$ factors $r$, $s$ in
  $A$, i.e., all monomials of total degree $2d-1$ in $r$,
  $s$. By induction on $d$, we show below that
  $M(d)\subset A\cdot \rad(B)^d\cdot A$ for all
  $d\geq1$, 
  hence that $(r+s)^{2d-1}\in A\cdot
  \rad(B)^d\cdot A$. We know from the first part of the proof
  that $B$ is right artinian, hence $\rad(B)$ is nilpotent
  by~\cite[(4.12)]{Lam}. If the integer $N$ is such that
  $\rad(B)^N=0$, it will follow that $(r+s)^{2N-1}=0$ for all $r+s\in
  \rad(B)\oplus \rad(B)\cdot E$.

  For all integers $d\geq1$ and $i\in\{-d+1,\,\ldots,\,d\}$,
  let 
  \[
    M(d,i)=
    \begin{cases}
      M(d)\cap A\cdot \rad(B)^{2d-1}&\text{if $i=-d+1$},\\
    M(d)\cap A\cdot \rad(B)^{d-i}\cap
    \bigl(\bigcup_{j\in\mathbb{N}} 
    A\cdot \rad(B)^{d-1+i}\cdot E\cdot \rad(B)^j\bigr)&\text{if
      $i>-d+1$}.
    \end{cases}
  \]
  Thus, $M(1,0)=\{r\}$ and $M(1,1)=\{s\}$, and $M(1)=M(1,0)\cup
  M(1,1)$. It is clear from the definitions that $M(d,i)r^2\subset
  M(d+1,i-1)$ for all $d$, 
  $i$. Moreover, since $s\in \rad(B)\cdot E$ we have $\rad(B)^js\subset
  \rad(B)^{j+1}\cdot E$ for all $j\geq0$, and since $\tau(s)=-s$ it
  follows that $E\cdot \rad(B)^js\subset B$ for all
  $j\geq0$. Therefore,
  \[
    M(d,i)rs\subset M(d+1,2-i) \quad\text{and}\quad
    M(d,i)sr\subset M(d+1,1-i) \quad\text{for all $d$, $i$.}
  \]
  Also, from $E\cdot
  \rad(B)^js\subset B$ and $s\in \rad(B)\cdot E$ it follows that
  $s^2\in \rad(B)$ and $(E\cdot \rad(B)^js)s\subset \rad(B)\cdot E$
  for all $j\geq0$, 
  hence $M(d,i)s^2\subset M(d+1,i)$ for all $d$, $i$. Since
  $M(1)=M(1,0)\cup M(1,1)$ and
  \[
    M(d+1)=M(d)r^2\cup M(d)rs\cup M(d)sr\cup M(d)s^2,
  \]
  induction on $d$ yields
  \[
    M(d)=\bigcup_{i=-d+1}^d M(d,i) \qquad\text{for all $d\geq1$.}
  \]
  By definition, $M(d,i)\subset A\cdot \rad(B)^d\cdot
  A$, because $d-i\geq d$ if $i\leq0$ and $d-1+i\geq d$ if
  $i\geq1$. Therefore, $M(d)\subset A\cdot \rad(B)^d\cdot
  A$ and the proof of~(a) is complete.
  \medbreak

  (b) Now, assume $A$ is semilocal and write $\overline A$ for
  $A/\rad(A)$, so $\overline A$ is right artinian and $\rad(\overline
  A)=0$. For every $a\in A$, write $\overline a$ for the image 
  of $a$ in $\overline A$. The automorphism $\tau$ preserves $\rad(A)$
  and induces an automorphism $\overline\tau$ of $\overline A$. If
  $\overline\tau(\overline a) = \overline a$, then
  $\overline a = \frac12
  \overline{(a+\tau(a))}$, hence the $k$-subalgebra of $\overline
  A$ fixed under $\overline\tau$ is $B/B\cap\rad(A)$. Part~(a) of the
  proof shows that $B/B\cap\rad(A)$ is right artinian and
  $\rad(B/B\cap\rad(A)) = 0$. But $B\cap\rad(A)\subset\rad(B)$
  by~\cite[(5.6)]{Lam} and $\rad(B/B\cap\rad(A)) =
  \rad(B)/B\cap\rad(A)$ by~\cite[(4.6)]{Lam}, hence
  $\rad(B)=B\cap\rad(A)$. Therefore $B/B\cap\rad(A)=B/\rad(B)$, hence
  $B$ is semilocal.

  If $A$ is rad-adically complete, then it is clear from the
  definitions that $B$ is $(B\cap\rad(A))$-adically complete. But
  $B\cap\rad(A)=\rad(B)$, hence $B$ is rad-adically complete. This
  completes the proofs of Lemma~\ref{lem:Psplit2} and
  Proposition~\ref{prop:Psplit2}. 
\end{proof}

\begin{theorem}
  \label{thm:octasplit}
  Assume that $\End_\catC(C)$ is semilocal and rad-adically complete
  for every object $C\in\catC$. Then the octagon of Witt
  groups~\eqref{eq:octaWitt} is exact.
\end{theorem}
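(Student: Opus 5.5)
By Theorem~\ref{thm:octamain} the octagon~\eqref{eq:octaWitt} is already a chain complex, so only the inclusions $\ker\subset\operatorname{im}$ remain to be shown. The plan is to reduce all eight positions to a single one, using the periodicity of \S\ref{subsec:periodual}. By diagram~\eqref{eq:commdiagcatdual}, each arrow of the octagon is, up to the equivalences $W(\widehat\Theta)$, $W(\widecheck\Theta)$ and the similitude isomorphisms $\langle\mu\rangle$ (multipliers $\pm2$), the map $W(\widehat F^{(n)})$ for a suitable iterated double $(\catC^{(n)},D^{(n)},\pm\delta^{(n)})$ with commutation $\alpha^{(n)}$. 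Composing with these isomorphisms does not affect exactness. It therefore suffices to prove exactness at $W(\catC',D',\delta')$ in the sequence $W(\catC,D,\delta)\to W(\catC',D',\delta')\to W(\catC'',D'',\delta'')$ given by $W(\widehat F)$ and $W(\widehat F')$, for every category with conjugation and commuting duality whose endomorphism algebras are semilocal and rad-adically complete. This hypothesis passes to all iterated doubles by Proposition~\ref{prop:Psplit2}. Replacing $\delta$ by $-\delta$ covers the sign variants.

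So let $x\in W(\catC',D',\delta')$ with $W(\widehat F')(x)=0$. The first step is to choose a split anisotropic representative $(C',h')$ of $x$. To do this, I would show that every symmetric space decomposes as an orthogonal sum of a hyperbolic space and a split anisotropic one: if $t\colon L\to C'$ is a nonzero summand with $D'(t)h't=0$, then, since $2$ is invertible and $\catC'$ is pseudo-abelian, the standard argument splits off a hyperbolic plane $H(L)$. Iterating this must terminate, and termination is the delicate point. The plan is to use that $\End(C')/\rad$ is artinian, so that the length of idempotent decompositions modulo the radical decreases at each step; idempotent lifting (rad-adic completeness) is what makes this work.

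Next, since the split exact structure makes metabolic equal to hyperbolic, $W(\widehat F')(x)=0$ means that $\catS(\widehat F')(C',h')\perp H(M)$ is hyperbolic for some $M\in\catC''$. To absorb $H(M)$, note that $H(M)$ is a direct summand of $\catS(\widehat F')$ of something hyperbolic. Indeed, by Proposition~\ref{prop:FG} we have $M\oplus\conjt''(M)\simeq F'G'(M)$, and $\catS(\widehat F')$ preserves hyperbolicity. Replacing $(C',h')$ by $(C',h')\perp H(G'(M))$ would lose split anisotropy, so this is not the route. Instead, I would apply the argument of Proposition~\ref{prop:Psplit} directly to the hyperbolic space $\catS(\widehat F')(C',h')\perp H(M)$ and then restrict. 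Alternatively, I would use Witt cancellation in $\catS(\catC'',D'',\delta'')$, which follows from the semilocal complete hypothesis by the reduction theory of \cite[II.4]{Knus}, to conclude that $\catS(\widehat F')(C',h')$ itself is hyperbolic. I expect the second approach, via cancellation, to be the cleaner one. In either case we obtain that $\catS(\widehat F')(C',h')$ is hyperbolic.

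Then Proposition~\ref{prop:Psplit}, applied to $(\catC',D',\delta')$ with its conjugation and commutation $\alpha'$, gives Property~(P) for $(C',h')$. This produces an isometry $\theta'\colon(\conjt'(C'),\alpha'_{C'}\conjt'(h'))\to(C',h')$ with $\theta'\conjt'(\theta')=\lambda'_{C'}$. Lemma~\ref{lem:extend} (b)$\Rightarrow$(a) then yields $(C',h')\simeq\catS(\widehat F)(X,h)$, so $x=W(\widehat F)[X,h]$. The main obstacle is the second step: establishing Witt cancellation, or equivalently the existence of split anisotropic representatives that remain hyperbolic-detectable, in the semilocal rad-adically complete setting. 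There I would lean on Lemma~\ref{lem:invert}-type lifting arguments together with \cite[II(4.6)]{Knus}.
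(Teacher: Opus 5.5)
Your proposal is correct and follows essentially the same route as the paper. Both arguments reduce by periodicity and Proposition~\ref{prop:Psplit2} to exactness at $W(\catC',D',\delta')$, pass to a split anisotropic representative by splitting off hyperbolic summands (termination follows from semilocality of $\End_{\catC'}(C')$; your count of nonzero orthogonal idempotents modulo the radical works and needs no idempotent lifting), use Witt cancellation to see that $\catS(\widehat F')(C',h')$ is hyperbolic, and conclude with Proposition~\ref{prop:Psplit} and Lemma~\ref{lem:extend}. The step you flag as the main obstacle is handled in the paper purely by citing \cite[Th.~II(6.6.1)]{Knus}, applicable in $\catS(\catC'',D'',\delta'')$ because the hypotheses pass to all iterated doubles.
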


\begin{proof}
  By Proposition~\ref{prop:Psplit2}, the hypothesis on endomorphism
  algebras also holds for the double category $\catC'$, hence we may
  use the same shifting argument as in the proof of
  Theorem~\ref{thm:octamain} to see that it suffices to prove the
  exactness of
  \[
    \xymatrix{
      W(\catC,D,\delta) \ar[r]^-{W(\widehat F)}&
      W(\catC',D',\delta')\ar[r]^-{W(\widehat F')}&
      W(\catC'',D'',\delta'').
    }
  \]
  Theorem \ref{thm:octamain} yields $W(\widehat F')\circ W(\widehat F)
  = 0$. To prove that the kernel of $W(\widehat F')$ lies in the image
  of $W(\widehat F)$, pick a symmetric space
  $(C',h')\in\catS(\catC',D',\delta')$ such that $W(\widehat
  F')(C',h')=0$. If $(C',h')$ is split isotropic, consider a direct
  summand $L\xrightarrow{t} C'\xrightarrow{s}L$ such that $s\circ
  t=\Idfunc_L$ and ${h'}^{-1}\circ D'(t)\circ h'=0$. Substituting
  $s-\frac12s\circ {h'}^{-1}\circ D'(t\circ s)\circ h'$ for $s$, we
  may assume $s\circ {h'}^{-1}\circ D'(s)=0$. The idempotent $e=t\circ
  s\in\End_{\catC'}(C')$ then satisfies $\sigma_{h'}(e)\circ
  e=0=e\circ \sigma_{h'}(e)$, hence $e+\sigma_{h'}(e)$ also is an
  idempotent. Since $\catC'$ is pseudo-abelian we may consider a
  splitting of $\Idfunc_{C'}-e-\sigma_{h'}(e)$, i.e., morphisms
  $M\xrightarrow{t_M} C'\xrightarrow{s_M} M$ in $\catC'$ such that
  $s_M\circ t_M=\Idfunc_M$ and $t_M\circ
  s_M=\Idfunc_{C'}-e-\sigma_{h'}(e)$. Then $(t,{h'}^{-1}\circ D'(s),
  t_M) \colon L\oplus D'(L)\oplus M \to C'$ is an isomorphism with
  inverse $\Bigl(
  \begin{smallmatrix}
    s\\ D'(t)\circ h'\\ s_M
  \end{smallmatrix}
  \Bigr)$ such that
  \begin{equation}
    \label{eq:giso}
    \begin{pmatrix}
      D'(t)\\
      D'({h'}^{-1}\circ D'(s))\\
      D'(t_M)
    \end{pmatrix}
    \circ h' \circ
    \begin{pmatrix}
      t&
      {h'}^{-1}\circ D'(s)& t_M
    \end{pmatrix}
    =
    \begin{pmatrix}
      0&\Idfunc_{D'(L)}&0\\
      \delta'_{D'(L)}&0&0\\
      0&0& D'(t_M)\circ h'\circ t_M
    \end{pmatrix}.
  \end{equation}
  Since the left side is an isomorphism, it follows that $D'(t_M)\circ
  h'\circ t_M$ is an isomorphism. Letting $g=D'(t_M)\circ h'\circ
  t_M$, we have $(M,g)\in \catS(\catC',D',\delta')$, and
  \eqref{eq:giso} shows that $(t,{h'}^{-1}\circ D'(s),t_M)$ is an
  isometry
  $H(L)\perp (M,g) \simeq (C',h')$, hence $(C',h')$ is Witt-equivalent
  to $(M,g)$. If $(M,g)$ is again split isotropic, we may repeat the
  procedure to obtain another isometry $(C',h')\simeq H(L)\perp H(L_1)
  \perp (M_1,g_1)$. But since $\End_{\catC'}(C')$ is semilocal,
  \cite[Prop.~II(5.2.6)]{Knus}\footnote{Knus assumes that the category
    is abelian and the endomorphism algebra of every object is right
    artinian, but inspection of the proof shows that all the arguments
    go through without change under our weaker hypotheses.} shows that
  $C'$ has no direct sum 
  decomposition with infinitely many nonzero summands, hence the
  procedure terminates in a finite number of steps and yields a
  split anisotropic symmetric space that is Witt-equivalent to
  $(C',h')$. Therefore, we may assume $(C',h')$ is
  split anisotropic. Witt cancellation holds in
  $\catS(\catC',D',\delta')$ by \cite[Th.~II(6.6.1)]{Knus}, hence
  $W(\widehat F')(C',h')=0$ means that $\catS(\widehat F')(C',h')$ is
  hyperbolic. Proposition~\ref{prop:Psplit} then yields an isometry
  $\theta\colon (\conjt'(C'),-\alpha'_{C'}\circ \conjt'(h')) \to
  (C',h')$ such that $\theta\circ\conjt'(\theta) = \lambda'_{C'}$, and
  Lemma~\ref{lem:extend} shows that there exists a symmetric space
  $(X,h)\in\catS(\catC,D,\delta)$ such that $\catS(\widehat F)(X,h) =
  (C',h')$. Thus, the Witt class of $(C',h')$ is the image of the Witt
  class of $(X,h)$ under $W(\widehat F)$.
\end{proof}

Theorem~\ref{thm:octasplit} applies for instance in the case where
$\catC=\catP_A$, with $A$ a semilocal rad-adically complete
$k$-algebra. To see this, note that each $P\in\catP_A$ is a direct
summand in a finitely generated free $A$-module, hence
$\End_A(P)\simeq e\,M_n(A)\,e$ for some integer~$n$ and some
idempotent $e$ in the matrix algebra $M_n(A)$. If $A$ is semilocal and
rad-adically complete, then $M_n(A)$ has the same properties since
$R\bigl(M_n(A)\bigr) = M_n\bigl(\rad(A)\bigr)$,
see~\cite[p.~61 and (20.4)]{Lam}. Moreover, $e\,M_n(A)\,e$ also is
semilocal and 
rad-adically complete because $\rad(e\,M_n(A)\,e) = e\,
R\bigl(M_n(A)\bigr)\,e=R\bigl(M_n(A)\bigr)\cap (e\,M_n(A)\,e)$,
see~\cite[(21.10)]{Lam}. In particular, the octagons of Witt groups
derived from~\eqref{eq:octaherm} or~\eqref{eq:octahermGBMF} are exact
when $A$ is a right artinian $k$-algebra, hence
Theorem~\ref{thm:octasplit} extends the Grenier-Boley--Mahmoudi
results in~\cite{GBM} and Ranicki's results in~\cite[(0.2)]{HTW} for
the case where the characteristic is different from~$2$. (For
$A$ right artinian and semisimple, the quadratic $L$-groups
$L_n(A,\sigma,\varepsilon)$ with $n$ even are Witt groups,
see~\cite[\S1]{R}. Note that Ranicki does not assume $2$ is
invertible.)

\subsection{Abelian categories}
\label{subsec:abel}

In this subsection, we assume that $\catC$ is an (essentially small)
abelian category, which we endow with its usual exact structure, and
we only consider dualities $(D,\delta)$ with $D$ exact. For any
symmetric space $(C,h)\in\catS(\catC,D,\delta)$ and any subobject
$\varphi\colon L\hookrightarrow C$, the \emph{orthogonal subobject} is
defined as $L^\perp:= \ker(D(\varphi)\circ h)$. The space $(C,h)$ is
said to be \emph{isotropic} if there is a nonzero subobject $L\subset
C$ such that $L\subset L^\perp$, and \emph{anisotropic}
otherwise. (Note that $L$ is not required to be a direct summand.) The
space $(C,h)$ is said to be \emph{metabolic} (\emph{weakly hyperbolic}
in~\cite[Def.~7.5.4]{Scharlau}) if there is a subobject $L\subset C$
such that $L=L^\perp$. The resulting Witt group $W(\catC,D,\delta)$ is
called \emph{weak Witt group} in~\cite[Def.~7.5.4]{Scharlau}.

In every conjugation $(\conjt,\lambda)$ we consider, the functor
$\conjt$ is assumed to be exact. The double category $\catC'$ is then
abelian, and the functors $F$, $G$, $\Phi$, $\Psi$ are
exact. Therefore, given a conjugation $(\conjt,\lambda)$ and a
commuting duality $(D,\delta)$, the functors in the
octagon~\eqref{eq:octasspaces} induce Witt group homomorphisms, which
again yield the $8$-periodic diagram~\eqref{eq:octaWitt}. Hyperbolic
spaces are clearly metabolic, hence Theorem~\ref{thm:octamain} again
shows that this diagram is a chain complex. The goal of this
subsection is to show that the homology of this chain complex vanishes
when $\catC$ is an artinian category.

\begin{lemma}
  \label{lem:stronghyp}
  Property~(P) holds for anisotropic spaces in $\catS(\catC,D,\delta)$.
\end{lemma}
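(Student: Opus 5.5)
The plan is to read off the required isometry $\theta$ as the ``graph'' of a Lagrangian of $\catS(\widehat F)(C,h)$. Write $\catS(\widehat F)(C,h)=(F(C),h')$ with $h'=\widehat\phi_C\circ F(h)=\bigl(\begin{smallmatrix}h&0\\0&-\alpha_C\circ\conjt(h)\end{smallmatrix}\bigr)$ on $C\oplus\conjt(C)$. Assume this space is metabolic, and choose a subobject $L'\subset F(C)$ in $\catC'$ with $L'=L'^\perp$. Since $G$ is exact and faithful and $G(\delta')=\delta$, the object $L:=G(L')$ is a subobject of $C\oplus\conjt(C)$ with $L=L^\perp$ for the form $h\perp(-\alpha_C\circ\conjt(h))$. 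Moreover, because $L'$ is a subobject in $\catC'$, the isomorphism $\Lambda_C\colon\conjt(C)\oplus\conjt\conjt(C)\to C\oplus\conjt(C)$ carries $\conjt(L)$ onto $L$.

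\emph{Step 1: $L$ meets each summand trivially.} The subobject $L\cap C$ (with $C=C\oplus0$) is totally isotropic for $h$, because the restriction of $h'$ to $C\oplus0$ is $h$. Since $(C,h)$ is anisotropic, $L\cap C=0$. For the other summand, $\Lambda_C$ maps $0\oplus\conjt\conjt(C)=\conjt(0\oplus\conjt(C))$ isomorphically onto $C\oplus0$. It therefore carries $\conjt(L\cap\conjt(C))$ monomorphically into $L\cap C=0$. Because $\conjt$ is exact with $\conjt\conjt\cong\Idfunc$, it follows that $L\cap\conjt(C)=0$. This avoids having to prove separately that $(\conjt(C),-\alpha_C\circ\conjt(h))$ is anisotropic.

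\emph{Step 2: both projections $p_1\colon L\to C$ and $p_2\colon L\to\conjt(C)$ are isomorphisms.} They are monomorphisms by Step 1, since their kernels are $L\cap\conjt(C)$ and $L\cap C$. For surjectivity of $p_1$, consider the orthogonal $(p_1L)^{\perp}$ of $p_1(L)$ in $(C,h)$. Viewed inside $C\oplus0$, it is orthogonal to all of $L$, so it lies in $L^\perp\cap C=L\cap C=0$. Because $D$ is exact and $h$ is an isomorphism, a subobject with zero orthogonal is everything; hence $p_1(L)=C$. The same argument applies to $p_2$. I expect this orthogonality argument in the abelian setting to be the main point requiring care: one must check that $M^\perp=0$ forces $M=C$, which comes from $C/M\cong D(M^\perp)$ via exactness of $D$.

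\emph{Step 3: the isometry.} Set $\theta=p_1\circ p_2^{-1}\colon\conjt(C)\isom C$, so that $L$ is the image of $(\theta,\Idfunc_{\conjt(C)})^{T}$. Total isotropy of $L$ for $h\perp(-\alpha_C\circ\conjt(h))$ says exactly that $D(\theta)\circ h\circ\theta=\alpha_C\circ\conjt(h)$. Thus $\theta$ is an isometry $(\conjt(C),\alpha_C\circ\conjt(h))\to(C,h)$. Finally, $\conjt(L)$ is the graph of $\conjt(\theta)$ in $\conjt(C)\oplus\conjt\conjt(C)$, and $\Lambda_C(\conjt(\theta)z,z)=(\lambda_C z,\conjt(\theta)z)$. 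The condition $\Lambda_C(\conjt(L))\subset L$ then forces $\lambda_C=\theta\circ\conjt(\theta)$, which is Property~(P).
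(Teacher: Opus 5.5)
Your proposal is correct and follows essentially the paper's proof: you take a Lagrangian $L=G(L')$ of $G\catS(\widehat F)(C,h)$, use anisotropy to make the projections of $L$ onto the summands isomorphisms, set $\theta=p_1\circ p_2^{-1}$, read off $D(\theta)\circ h\circ\theta=\alpha_C\circ\conjt(h)$ from total isotropy, and read off $\theta\circ\conjt(\theta)=\lambda_C$ from the $\catC'$-structure of $L'$ (i.e.\ $\Lambda_C$ carrying $\conjt(L)$ onto $L$). The differences are minor: you get $L\cap\conjt(C)=0$ by transporting $L\cap C=0$ through $\Lambda_C$ instead of using anisotropy of $(\conjt(C),-\alpha_C\circ\conjt(h))$, which the paper invokes without comment, and you show $p_1$ is an isomorphism directly, whereas the paper deduces that $\theta$ is invertible from $\theta\circ\conjt(\theta)=\lambda_C$.
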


\begin{proof}
  Let $(C,h)\in\catS(\catC,D,\delta)$ be an anisotropic symmetric space.
  Let $\catS(\widehat F)(C,h)=(F(C),h')$ with $h'=\widehat\phi_C\circ
  F(h)$ and let $\varphi'\colon L'\hookrightarrow F(C)$ be a subobject
  such that $L'={L'}^\perp$. Write 
  $\varphi=G(\varphi')$ and $L=G(L')$, so $\varphi\colon L\to
  C\oplus\conjt(C)$ is a morphism in $\catC$. Write also
  $\epsilon_1$, $\epsilon_2$, $\pi_1$, $\pi_2$ for the canonical
  morphisms
  \[
    \epsilon_1\colon C\to C\oplus\conjt(C),\quad
    \epsilon_2\colon \conjt(C)\to C\oplus\conjt(C),\quad
    \pi_1\colon C\oplus\conjt(C)\to C,\quad
    \pi_2\colon C\oplus\conjt(C)\to\conjt(C).
  \]
  \textit{Claim: $\pi_2\circ\varphi\colon L\to\conjt(C)$ is an
    isomorphism.}

  We show that $\pi_2\circ\varphi$ is a monomorphism and an
  epimorphism. If $\psi\colon M\to L$ is a morphism such that
  $\pi_2\circ\varphi\circ\psi=0$, then $\varphi\circ\psi$ factors
  through the kernel of $\pi_2$, which is $C$, hence there exists
  $\psi_1\colon M\to C$ such that
  $\varphi\circ\psi=\epsilon_1\circ\psi_1$. Since $D'(\varphi')\circ
  h'\circ \varphi'=0$, it follows that
  \[
    D(\psi_1)\circ D(\epsilon_1)\circ
    \begin{pmatrix}
      h&0\\0&-\alpha_C\circ\conjt(h)
    \end{pmatrix}
    \circ\epsilon_1\circ\psi_1=D(\varphi\circ\psi)\circ G(h')\circ
    \varphi\circ\psi=0.
  \]
  But $D(\epsilon_1)\circ G(h')\circ\epsilon_1=h$ and $(C,h)$ is
  anisotropic, hence $\psi_1=0$ and
  $\varphi\circ\psi=0$. But $\varphi$ is a monomorphism, hence
  $\psi=0$. This shows that $\pi_2\circ\psi$ is a 
  monomorphism.

  Similarly, if $\psi\colon M\to D\conjt(C)$ is a morphism such that
  $D(\pi_2\circ\varphi)\circ\psi=0$, then $G(h')^{-1}\circ
  D(\pi_2)\circ\psi$ factors through $\ker(D(\varphi)\circ G(h'))=L$,
  hence $\psi=0$ because $(\conjt(C),-\alpha_C\circ\conjt(h))$ is
  anisotropic and $D(\pi_2)$ is a monomorphism. Therefore,
  $D(\pi_2\circ\varphi)$ is a monomorphism, hence $\pi_2\circ\varphi$
  is an epimorphism because $D$ is an exact functor.
  \medbreak

  The claim is thus proved, and me may consider
  $\theta=(\pi_1\circ\varphi) \circ(\pi_2\circ\varphi)^{-1}\colon
  \conjt(C)\to C$. Let $L'=(L,\ell)$ for some $\ell\colon\conjt(L)\to
  L$. Since $\varphi'$ is a morphism in $\catC'$, there are
  commutative diagrams
  \[
    \xymatrix{
      \conjt(L)\ar[r]^-{\conjt(\varphi)} \ar[d]_{\ell} &
      \conjt(C)\oplus\conjt\conjt(C) \ar[r]^-{\conjt(\pi_1)}
      \ar[d]_{\Lambda_C} &
      \conjt(C) \ar@{=}[d]\\
      L\ar[r]^-{\varphi}& C\oplus\conjt(C) \ar[r]^-{\pi_2}&
      \conjt(C)
    }
    \quad\text{and}\quad
    \xymatrix{
      \conjt(L)\ar[r]^-{\conjt(\varphi)} \ar[d]_{\ell} &
      \conjt(C)\oplus\conjt\conjt(C) \ar[r]^-{\conjt(\pi_2)}
      \ar[d]_{\Lambda_C} &
      \conjt\conjt(C) \ar[d]^{\lambda_C}\\ 
      L\ar[r]^-{\varphi}& C\oplus\conjt(C) \ar[r]^-{\pi_1}&
      C
    }
  \]
  It follows that $\ell=(\pi_2\circ\varphi)^{-1}\circ
  \conjt(\pi_1\circ\varphi)$ and $(\pi_1\circ\varphi)\circ\ell =
  \lambda_C\circ\conjt(\pi_2\circ\varphi)$, hence
  $\theta\circ\conjt(\theta) = \lambda_C$. Since $\lambda_C$ is an
  isomorphism, we see that $\theta$ is invertible on the right. But
  taking the image of each side under $\conjt$ and using
  $\lambda_C\circ\conjt\conjt(\theta)=\theta\circ\conjt(\lambda_C)$ we
  obtain
  $\conjt(\theta)\circ\lambda_C^{-1}\circ\theta=\Idfunc_{\conjt(C)}$,
  hence $\theta$ is also invertible on the left and is therefore an
  isomorphism.

  To compute $D(\theta)\circ h\circ\theta$, we use
  \[
    G(h')= D(\pi_1)\circ h\circ \pi_1 - D(\pi_2)\circ(\alpha_C\circ
    \conjt(h)) \circ\pi_2.
  \]
  Since $D(\varphi)\circ G(h')\circ\varphi=0$, this equation yields
  \[
    D(\pi_1\circ\varphi)\circ h \circ (\pi_1\circ\varphi) =
    D(\pi_2\circ\varphi)\circ \alpha_C\circ\conjt(h) \circ
    (\pi_2\circ\varphi),
  \]
  hence $D(\theta)\circ h\circ\theta = \alpha_C\circ\conjt(h)$.
\end{proof}

Recall that an abelian category $\catC$ is said to be \emph{artinian}
(resp.\ \emph{noetherian}) if the descending (resp.\ ascending) chain
condition for 
subobjects holds for every $C\in\catC$. These properties are inherited
by the double category when $\catC$ carries a conjugation, since the
forgetful functor $G$ carries subobjects to subobjects. If $\catC$
carries a duality, the artinian and the noetherian conditions are
equivalent, and imply that each object has finite length.

\begin{theorem}
  \label{thm:abelcat}
  If $\catC$ is an artinian abelian category, the
  octagon of Witt groups~\eqref{eq:octaWitt} is exact.
\end{theorem}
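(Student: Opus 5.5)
We follow the pattern of the proof of Theorem~\ref{thm:octasplit}. The double category $\catC'$ is again abelian and artinian: $G$ is exact and faithful and carries subobjects to subobjects, and $\conjt'$ is exact. We may therefore use the shifting argument of Theorem~\ref{thm:octamain}, with the equivalences $\widehat\Theta$, $\widecheck\Theta$ and the similitudes of~\eqref{eq:diagGF'Theta}; these are exact functors and induce isomorphisms on Witt groups. Exactness of the whole octagon then reduces to exactness of
\[
  W(\catC,D,\delta)\xrightarrow{W(\widehat F)}W(\catC',D',\delta')
  \xrightarrow{W(\widehat F')}W(\catC'',D'',\delta''),
\]
for an arbitrary artinian abelian category with commuting conjugation and duality. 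By Theorem~\ref{thm:octamain} the composite vanishes, so it remains to show that $\ker W(\widehat F')\subset\operatorname{im}W(\widehat F)$.

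Let $(C',h')\in\catS(\catC',D',\delta')$ with $W(\widehat F')[C',h']=0$. First we replace $(C',h')$ by an anisotropic representative of its Witt class. If $L\subset L^\perp$ is a nonzero isotropic subobject, then $L^\perp/L$ carries an induced symmetric form, and $(C',h')$ is Witt equivalent to it: this is the standard sublagrangian reduction in exact categories with duality, see~\cite{Balmer} or~\cite[Ch.~7, \S5]{Scharlau}. Objects of $\catC'$ have finite length, since $\catC'$ is artinian and carries a duality. The length strictly decreases at each step, so after finitely many steps we reach an anisotropic $(C',h')$ in the same Witt class.

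The key point, and the one I expect to be the main obstacle, is to pass from ``$W(\widehat F')[C',h']=0$'' to ``$\catS(\widehat F')(C',h')$ is metabolic''. Vanishing in the Witt group means only stable metabolicity: $\catS(\widehat F')(C',h')\perp M_1\simeq M_2$ with $M_1,M_2$ metabolic. I would first try to show that in an abelian category of finite length with $\frac12$, a space that is stably metabolic is metabolic. Take a lagrangian $N\subset M_2$. Then the images and intersections of $N$ with the summand $\catS(\widehat F')(C',h')$ and with a lagrangian of $M_1$ can be reduced: the sublagrangian construction applied to $N\cap(\text{lagrangian of }M_1)$ and compatibility of reduction with orthogonal sums should produce a lagrangian in the first summand. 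This is the ``weak Witt cancellation'' of~\cite[Ch.~7, \S5]{Scharlau}, and I would adapt that argument using finite length as the induction parameter. Alternatively, the kernel statement can be organised as follows: a class is zero iff its anisotropic representative is zero, by uniqueness of anisotropic reductions up to isometry in artinian categories. Then reduce $\catS(\widehat F')(C',h')$ to an anisotropic space, which must be $0$, hence the original is metabolic.

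Granting metabolicity, Lemma~\ref{lem:stronghyp} applied in $\catC'$ to the anisotropic space $(C',h')$ (Property~(P) for $\widehat F'$) yields an isometry $\theta'\colon(\conjt'(C'),\alpha'_{C'}\circ\conjt'(h'))\to(C',h')$ with $\theta'\circ\conjt'(\theta')=\lambda'_{C'}$. Lemma~\ref{lem:extend} then gives $(X,h)\in\catS(\catC,D,\delta)$ with $\catS(\widehat F)(X,h)\simeq(C',h')$, so $[C',h']$ lies in the image of $W(\widehat F)$. Finally, we need to check that $X$ lies in $\catC$ and $\Theta$ is exact, which holds as stated at the start of \S\ref{subsec:abel}.
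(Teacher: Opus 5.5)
Your proposal follows the paper's proof step for step (the shifting reduction, passage to an anisotropic representative in the artinian category $\catC'$, metabolicity of $\catS(\widehat F')(C',h')$, then Lemmas~\ref{lem:stronghyp} and~\ref{lem:extend}), and the step you flag as the main obstacle is exactly the one the paper settles by citing Youssin's Theorem~(4.9). Your first sketch of that step can be made precise without any induction on length, provided you reduce along the lagrangian $N_1$ of $M_1$ itself rather than along $N\cap N_1$: if $Y=\catS(\widehat F')(C',h')$ and $Y\perp M_1\simeq M_2$ with $N$ a lagrangian of $M_2$, then $L=0\oplus N_1$ satisfies $L^\perp/L\simeq Y$, and $\bigl((N\cap L^\perp)+L\bigr)/L$ is a lagrangian of $L^\perp/L$, because $\perp$ is an involutive order-reversing lattice automorphism of the subobject lattice and that lattice is modular.
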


\begin{proof}
  By the same shifting argument
  as in the proof of Theorem~\ref{thm:octamain}, we are reduced to
  proving the exactness of the sequence
  \[
  \xymatrix{
    W(\catC,D,\delta)\ar[r]^-{W(\widehat F)}&
    W(\catC',D',\delta') \ar[r]^-{W(\widehat F')}&
    W(\catC'',D'',\delta'').
  }
\]
We already know from Theorem~\ref{thm:octamain} that this is a
zero-sequence. Let $(C',h')\in\catS(\catC',D',\delta')$ be a symmetric
space whose Witt equivalence class is in the kernel of $W(\widehat
F')$. Since $\catC'$ is artinian, isotropic reduction (also known as
sublagrangian reduction~\cite[\S1.2.5]{Balmer}) yields an anisotropic
symmetric space Witt-equivalent to $(C',h')$, see~\cite[Cor.~2.12]{SW}
or \cite[Th.~7.5.5]{Scharlau}. We may therefore assume $(C',h')$ is
anisotropic. Since the Witt class of $(C',h')$ lies in the kernel of
$W(\widehat F')$, it follows from~\cite[Th.~(4.9)]{Youssin} that
$\catS(\widehat F')(C',h')$ is metabolic. Lemma~\ref{lem:stronghyp}
shows that $(C',h')$ satisfies Property~(P), hence
Lemma~\ref{lem:extend} yields a symmetric space
$(X,h)\in\catS(\catC,D,\delta)$ whose Witt equivalence class is mapped
under $W(\widehat F)$ to the Witt equivalence class of $(C',h')$.
\end{proof}

\subsection{Example: The Warshauer octagon}
\label{subsec:exdual2}
Let $R=k[t,t^{-1}]$ be the ring of Laurent
polynomials in one indeterminate over a field $k$ of characteristic
different from~$2$, and let $\catF_R$ be the category of right
$R$-modules of finite length, i.e., of finite-dimensional $k$-vector
spaces with an invertible operator (given by the action of $t$).
Consider the structure $(\Idfunc_R,t)$ on $R$ (in the sense of
\S\ref{subsec:exconjug1}); the corresponding 
$(\Idfunc_R,t)$-twisted quadratic extension of $R$ is $R'=k[u,u^{-1}]$
where $u^2=t$, and the conjugation $(\conjt,\lambda)$ on $\catF_R$
derived from $(\Idfunc_R,t)$ has $\conjt=\Idfunc_{\catF_R}$ and
$\lambda_X\colon X\to X$ is multiplication by $t$ for
$X\in\catF_R$. As in~\S\ref{subsec:exconjug1}, we endow $R'$ with the
structure $(\conjt',-t)$, where $\conjt'(u)=-u$, and identify
$\catF_R'=\catF_{R'}$. 

To define a duality on $\catF_R$, fix an element $\mu\in k^\times$ and
let 
\[
  D(X)=\Hom_k(X,k)\qquad\text{for $X\in\catF_R$},
\]
endowed with the $R$-module action defined by the condition
\[
  \langle\pi t,x\rangle = \langle \pi, xt^{-1}\rangle\mu^2
  \qquad\text{for $\pi\in D(X)$ and $x\in X$,}
\]
where $\langle\text{\textvisiblespace}\,,
\text{\textvisiblespace}\rangle$ is the canonical pairing
$\Hom_k(X,k)\times X\to k$. With $\delta\colon \Idfunc_{\catF_R}
\natiso DD$ the usual natural isomorphism given by $\langle
\delta_X(x),\pi\rangle =\langle\pi,x\rangle$ for $x\in X$ and $\pi\in
D(X)$, the pair $(D,\delta)$ is a duality\footnote{The usual duality
  on the category of modules of finite length is different, see
  \cite[Ch.~6, \S1]{Scharlau}.} on
$\catF_R$. It is 
commuting with the conjugation $(\Idfunc_{\catF_R},\lambda)$ 
under the natural isomorphism $\alpha\colon D\natiso D$ given by
\[
  \alpha_X\colon D(X)\to D(X), \qquad \pi\mapsto \pi\mu \text{ for
    $\pi\in D(X)$}.
\]
The construction in Proposition~\ref{prop:altdual} yields a new
duality $(D,\delta_\alpha)$ where
\[
  \langle (\delta_\alpha)_X(x),\pi\rangle = \langle\pi, xt^{-1}\rangle
  \mu \qquad\text{for $x\in X$ and $\pi\in D(X)$.}
\]
On the other hand, the duality $(D',\delta')$ on $\catF_{R'}$ is given
by 
$D'(X')=\Hom_k(X',k)$ with the $R'$-module action given by
\[
  \langle\pi u,x\rangle = -\langle\pi,xu^{-1}\rangle \mu
  \qquad\text{for $\pi\in D'(X')$ and $x\in X'$,}
\]
and $\delta'_{X'} = \delta_{G(X')}$ for $X'=(X,f)\in\catF'_R$.
Since the automorphism $\conjt'$ on $R'$ maps $u$ to $-u$, for
$X'\in\catF_{R'}$ the
$R'$-module action on
$\conjt'D'(X')=\Hom_k(X',k)$ is given by
\[
  \langle\pi u,x\rangle = \langle\pi,xu^{-1}\rangle \mu
  \qquad\text{for $\pi\in D'(X')$ and $x\in X'$.}
\]
Moreover, $\alpha'_{X'}=-\alpha_{G(X')}$ and
$\lambda'_{X'}=-\lambda_{G(X')}$, hence the duality
$(\conjt'D',\delta'_{\alpha'})$ satisfies
\[
  \langle(\delta'_{\alpha'})_{X'}(x),\,\pi\rangle =
  \langle\pi,\,xt^{-1}\rangle\mu \qquad\text{for $\pi\in\conjt'D'(X')$
    and $x\in X'$.}
\]

For $(X,h)\in\catS(\catF_R,D,\delta)$, define
\[
  b\colon X\times X\to k\qquad
  \text{by}\quad
  b(x,y)=\langle h(x),y\rangle
  \quad\text{for $x$, $y\in X$.}
\]
The map $b$ is a nonsingular $k$-bilinear form and the
$R$-linearity of $h$ means that
\[
  b(xt,\,yt) = b(x,\,y)\mu^2
  \qquad\text{for $x$, $y\in X$.}
\]
Thus, multiplication by $t$ is a similitude with multiplier $\mu^2$.
Writing $\catB^{\pm}(k,\mu^2)$ for the category of triples
$(X,b,\ell)$ where $X$ is a finite-dimensional $k$-vector space,
$b\colon X\times X\to k$ is a nonsingular $(\pm1)$-symmetric bilinear
form and $\ell\in\End_k(X)$ is a similitude with multiplier $\mu^2$,
we thus have
\begin{equation}
  \label{eq:SFR1}
  \catS(\catF_R,D,\pm\delta)=\catB^{\pm}(k,\mu^2).
\end{equation}
When $\mu=\pm1$, the objects in this category are the inner product
spaces with isometry considered by Milnor~\cite{Milnor}.

With the commutation $\alpha$ between $(\Idfunc_{\catF_R},\lambda)$
and $(D,\delta)$ given above we may similarly
define for 
every $(X,h)\in\catS(\catF_R,D,\pm\delta_\alpha)$ the map
\[
  b\colon X\times X\to k
  \qquad\text{by}\quad
  b(x,y)=\langle h(x),y\rangle
  \quad\text{for $x$, $y\in X$.}
\]
Now, the form $b$ is $k$-bilinear but not symmetric: the
property $h=D(h)\circ(\delta_\alpha)_X$ yields
\[
  b(xt,\,y) =\pm b(y,\,x)\mu
  \qquad\text{for all $x$, $y\in X$,}
\]
from which it follows that multiplication by $t$ is a similitude with
multiplier $\mu^2$.
To obtain an alternative description of this category, consider the
category $\catA(k)$ whose objects are pairs $(X,a)$ where $X$ is a
finite-dimensional $k$-vector space and $a\colon X\times X\to k$ is a
nonsingular bilinear form without symmetry requirement, and whose
morphisms are isometries. (The objects in $\catA(k)$ are the
\emph{asymmetric inner product spaces over $k$} in Warshauer's
terminology from~\cite{War}.) Recall from~\cite[Cor.~I.4.2]{War} that
to each $(X,a)\in\catA(k)$ is attached a bijective \emph{symmetry
  operator} $s\in\End_k(X)$ defined by the property that
$a(x,y)=a(y,s(x))$ for $x$, $y\in X$. There are isomorphisms of
categories
\begin{equation}
  \label{eq:SFR2}
  \catS(\catF_R,D,\delta_\alpha) = \catA(k)
  \qquad\text{and}\qquad
  \catS(\catF_R,D,-\delta_\alpha) = \catA(k)
\end{equation}
under which $(X,h)\in\catS(\catF_R,D,\pm\delta_\alpha)$ is identified
with $(X,b)\in\catA(k)$. Conversely, for each
$(X,a)\in\catA(k)$ we define $h\colon X\to D(X)$ by
$h(x)=a(x,\text{\textvisiblespace})$ for $x\in X$ and identify $(X,a)$
with $(X,h)\in\catS(\catF_R,D,\pm\delta_\alpha)$ in which the action
of $t$ is given by $xt=\pm s^{-1}(x)\mu$ for $x\in X$.

Under the identification $\catF'_R=\catF_{R'}$, and with the duality
$(D',\delta')$ defined above, we may identify by
the same construction as in~\eqref{eq:SFR1}
\begin{equation}
  \label{eq:SFR3}
  \catS(\catF_{R'},D',\pm\delta')=\catB^{\pm}(k,-\mu).
\end{equation}

For every symmetric space
$(X',h')\in\catS(\catF_{R'},\conjt'D',\pm\delta'_{\alpha'})$ the
associated $k$-bilinear map
$b'\colon X'\times X'\to k$
defined by
$b'(x,y)=\langle h'(x),\,y\rangle$
for $x$, $y\in X'$
satisfies
\[
  b'(xu,yu)=b'(x,y)\mu
  \qquad\text{and}\qquad
  b'(xt,y) = \pm b'(y,x)\mu
  \quad\text{for $x$, $y\in X'$.}
\]
Since $u^2=t$, these equations imply
\[
  b'(yu,x) = \pm b'(xt,yu)\mu^{-1} =
  \pm b'(xu,y)
  \qquad\text{for $x$, $y\in X'$,}
\]
hence the map
$b\colon X'\times X'\to k$ defined by
$b(x,y)=b'(xu,y)$ for $x$, $y\in X'$
is a nonsingular $(\pm1)$-symmetric bilinear form on $X'$ such that
$b(xu,yu) = b(x,y) 
\mu$ for $x$, $y\in X'$. The map $h'\colon X'\to \conjt'D'(X')$ is
uniquely determined by $b$ since $\langle h'(x),y\rangle =
b(xu^{-1},y)$ for $x$, $y\in X'$. Therefore, by mapping $(X',h')$ to
$(X',b)$ we may identify
\begin{equation}
  \label{eq:SFR4}
  \catS(\catF_{R'},\conjt'D',\delta'_{\alpha'}) = \catB^{\pm}(k,\mu).
\end{equation}

With the identifications~\eqref{eq:SFR1} and \eqref{eq:SFR3} above,
the functor $\catS(\widehat F)\colon\catB^{\pm}(k,\mu^2) \to
\catB^{\pm}(k,-\mu)$ is given by
\[
  \catS(\widehat F)(X,b,\ell) = (X\oplus X, b\perp\langle-\mu\rangle
  b, \ell'), \quad\text{where $\ell'(x_1,x_2)=(\ell(x_2),x_1)$ for
    $x_1$, $x_2\in X$.}
\]
It gives rise to the maps denoted by $I_{\pm1}$ in~\cite[p.~109]{War}.

The functor $\catS(\widecheck F)\colon\catA(k)\to\catB^\pm(k,\mu)$
takes two different forms under the identifications~\eqref{eq:SFR2}
and \eqref{eq:SFR4}, depending on whether $\catA(k)$ is viewed as
$\catS(\catF_R,D,\delta_\alpha)$ or $\catS(\catF_R,D,-\delta_\alpha)$:
for $(X,a)\in\catA(k)$,
\[
  \catS(\widecheck F)_{\pm}(X,a) = (X\oplus X,a_\pm,\ell_\pm)
\]
where for $x_1$, $x_2$, $y_1$, $y_2\in X$,
\[
  a_\pm\bigl((x_1,x_2),(y_1,y_2)\bigr) = \mu\bigl(a(x_1,y_2) \pm
  a(y_1,x_2)\bigr) \quad\text{and}\quad \ell_\pm(x_1,x_2) = (\pm
  s^{-1}(x_2)\mu, x_1).
\]
These functors give rise to the maps $m_{\pm1}$ in~\cite[p.~108]{War}
up to similitude: the definition of $a_\pm$ in~\cite[p.~108]{War}
lacks the factor $\mu$, which does not affect the main results.

By contrast, the functor $\catS(\widehat G)\colon \catB^\pm(k,-\mu)
\to \catA(k)$ has a uniform description: for
$(X,b',\ell')\in\catB^\pm(k,-\mu)$,
\[
  \catS(\widehat G)(X,b',\ell') = (X,b)
  \qquad\text{with\quad $b(x,y)=\mu^{-1}b'(x,\ell(y))$ for $x$, $y\in X$.}
\]
It yields the maps $d_{\pm1}$ in~\cite[p.~109]{War}. Finally, the
functor $\catS(\widecheck G)\colon \catB^\pm(k,\mu) \to
\catB^\pm(k,\mu^2)$ yields the ``squaring map'' of~\cite[p.~108]{War}:
for $(X,b,\ell)\in\catB^\pm(k,\mu)$,
\[
  \catS(\widecheck G)(X,b,\ell) = (X,b,\ell^2).
\]

The octagon~\eqref{eq:octasspaces} takes the form
\begin{equation*}
  \begin{split}
  \xymatrix{
    \catB^+(k,\mu^2)\ar[r]^-{\catS(\widehat F)}&
    \catB^+(k,-\mu)\ar[r]^-{\catS(\widehat G)}&
    \catA(k)\ar[d]^{\catS(\widecheck F)_-}
    \\
    \catB^+(k,\mu)\ar[u]^{\catS(\widecheck G)}
    &
    &
    \catB^-(k,\mu)
    \ar[d]^{\catS(\widecheck G)}
    \\
    \catA(k)\ar[u]^{\catS(\widecheck F)_+}
    &
    \catB^-(k,-\mu)\ar[l]_-{\catS(\widehat G)}
    &
    \catB^-(k,\mu^2)\ar[l]_-{\catS(\widehat F)}
  }
  \end{split}
\end{equation*}
Theorem~\ref{thm:abelcat} shows that the corresponding octagon of Witt
groups is exact.

\bibliographystyle{amsalpha}
\bibliography{Octagons}

\end{document}